\newtheorem{lemma}{Lemma}[section]
\newtheorem{theorem}[lemma]{Theorem}
\newtheorem*{theorem*}{Theorem}
\newtheorem{corollary}[lemma]{Corollary}
\newtheorem{proposition}[lemma]{Proposition}
\newtheorem*{proposition*}{Proposition}
\theoremstyle{remark}
\theoremstyle{definition}
\newtheorem*{definition*}{Definition}
\newtheorem*{conjecture*}{Conjecture}
\newtheorem*{remark*}{Remark}
\newtheorem*{claim*}{Claim}
\newcommand{\C}{{\mathbb C}}
\newcommand{\E}{{\mathbb E}}
\newcommand{\N}{{\mathbb N}}
\newcommand{\Q}{{\mathbb Q}}
\newcommand{\R}{{\mathbb R}}
\newcommand{\Z}{{\mathbb Z}}
\newcommand{\norm}[1]{\left\Vert #1\right\Vert}
\newcommand{\nnorm}[1]{\lvert\!|\!| #1|\!|\!\rvert}
\begin{document}

\title[Integer part polynomial correlation sequences]{Integer part polynomial correlation sequences}

\author{Andreas Koutsogiannis}
\address[Andreas Koutsogiannis]{The Ohio State University, Department of mathematics, Columbus, Ohio, USA} \email{koutsogiannis.1@osu.edu}

\begin{abstract}
Following an approach presented by N.~Frantzikinakis, we prove that any multiple correlation sequence, defined by invertible measure preserving actions of  commuting  transformations with integer part polynomial iterates,  is the sum of a nilsequence and an error term, small in uniform density. As an intermediate result, we show that multiple ergodic averages with iterates given by the integer part of real valued polynomials converge in the mean. Also,  we show that under certain assumptions the limit is zero. An important role in our arguments  plays a transference principle, communicated to us by M.~Wierdl, that enables to deduce results for  $\mathbb{Z}$-actions from results for flows.
\end{abstract}

\subjclass[2010]{Primary: 37A30; Secondary: 05D10, 37A05, 11B30. }

\keywords{Correlation sequences, nilsequences, generalized polynomials.}

\maketitle

\section{Introduction and main results}

  In this paper we study the structure of sequences of the form 
\begin{equation}\label{E:integer}
a(n)=\int f_0\cdot (\prod_{i=1}^\ell T_i^{[p_{i,1}(n)]}) f_1\cdot \ldots \cdot (\prod_{i=1}^\ell T_i^{[p_{i,m}(n)]}) f_m\, d\mu, \quad n\in \N,
\end{equation}
which we call \emph{integer part polynomial correlation sequences}, where $[\cdot]$ denotes the integer part, $T_1,\ldots, T_\ell\colon X\to X$ are invertible commuting measure preserving transformations on a probability space $(X,\mathcal{X},\mu),$ $f_0,f_1,\ldots, f_m\in L^\infty(\mu)$ and $p_{i,j}\in \R[t]$ are real valued polynomials for all $1\leq i\leq \ell,$ $1\leq j\leq m.$ Even if it's not stated, without loss of generality, for the bounded functions $f_i,$ we will always assume that $\norm{f_i}_\infty\leq 1$ for all $i.$

\begin{definition*}
We call the setting $(X,\mathcal{X},\mu,T_1,\ldots,T_\ell)$  a \emph{system}, where $T_1,\ldots, T_\ell\colon X\to X$ are invertible commuting measure preserving transformations on the probability space $(X,\mathcal{X},\mu).$ 
\end{definition*}

 Any sequence of the form \eqref{E:integer} is a special case of a multiple correlation sequence, i.e., sequence of the form
\begin{equation}\label{E:mcs}
\int f_0\cdot T_1^{n_1}f_1\cdot \ldots \cdot T_\ell^{n_\ell}f_\ell\, d\mu,
\end{equation} with $n_1,\ldots,n_\ell\in \Z.$ 
Results on the structure and the limiting behaviour of averages of multiple correlation sequences is a central problem in ergodic  (Ramsey) theory. Although determining the precise structure of such sequences is an important open problem in the area, in recent years a lot of progress has been made.

\medskip

 In order to state some relevant results, we recall the notion of an $\ell$-step nilsequence. 
  
 \begin{definition*}[\cite{BHK05}]
For $\ell\in \N,$  an  \emph{$\ell$-step nilsequence} is a sequence of the form $(F(g^n\Gamma))$, where $F\in C(X)$, $X=G/\Gamma,$ $G$ is an  $\ell$-step nilpotent Lie group, $\Gamma$ is a discrete cocompact subgroup and
  $g\in G$. A  \emph{$0$-step nilsequence}
is a constant sequence.
 \end{definition*}

 In the special case of a single ergodic transformation (i.e., all the invariant sets under the transformation have measure either $0$ or $1$) where $T_i=T^i,$ $i=1,\ldots, \ell,$ 
Bergelson, Host and Kra proved the following result:

\begin{theorem*}[\mbox{\cite[Theorem 1.9]{BHK05}}]\label{T:0}
 For $\ell\in \N,$ let $(X,\mathcal{X},\mu, T)$ be an ergodic system  and    $f_0,f_1,$ $\ldots,f_\ell\in L^{\infty}(\mu).$ Then  we have the
 decomposition
$$
\int f_0\cdot T^{n}f_1\cdot \ldots \cdot T^{\ell n}f_\ell\ d\mu=\mathcal{N}(n)+e(n), \quad  n\in \N, $$
where
\begin{enumerate}
\item $(\mathcal{N}(n)) \text{ is a uniform limit of}\;\ell\text{-step nilsequences
with} \norm{\mathcal{N}}_{\infty}\leq 1;$

\item  $\lim_{N-M\to \infty} \frac{1}{N-M}\sum_{n=M}^{N-1} |e(n)|^2=0.$
\end{enumerate}
\end{theorem*}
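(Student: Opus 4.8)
The plan is to use the Host--Kra structure theory to reduce to a single nilsystem, and then to recognise a correlation sequence over a nilsystem as a nilsequence by means of Leibman's equidistribution theorems for polynomial orbits on nilmanifolds. \emph{Step 1 (the right characteristic factor).} One first identifies the appropriate characteristic factor for the sequence $a(n)=\int f_0\cdot T^{n}f_1\cdots T^{\ell n}f_\ell\,d\mu$ itself, not just for its Ces\`aro averages. Passing to the product system $(X\times X,\mu\times\mu,T\times T)$,
$$
\frac{1}{N-M}\sum_{n=M}^{N-1}|a(n)|^{2}=\int_{X\times X}(f_0\otimes\bar f_0)\cdot\Bigl(\frac{1}{N-M}\sum_{n=M}^{N-1}\prod_{j=1}^{\ell}(T\times T)^{jn}(f_j\otimes\bar f_j)\Bigr)\,d(\mu\times\mu).
$$
By mean convergence of linear multiple ergodic averages the inner average converges in $L^{2}(\mu\times\mu)$ as $N-M\to\infty$, and by the Host--Kra characteristic factor theorem its limit vanishes whenever $\mathbb{E}(f_{j_0}\otimes\bar f_{j_0}\mid\mathcal{Z}_{\ell-1}(X\times X))=0$ for some $1\le j_0\le\ell$. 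Using the identity $\nnorm{h\otimes\bar h}_{\ell}=\nnorm{h}_{\ell+1}^{2}$ relating the Host--Kra seminorms of $X\times X$ and of $X$ (valid since $T$ is ergodic), this occurs as soon as $\mathbb{E}(f_{j_0}\mid\mathcal{Z}_{\ell}(X))=0$; hence in that case $\lim_{N-M\to\infty}\frac{1}{N-M}\sum_{n=M}^{N-1}|a(n)|^{2}=0$. Since $\mathcal{Z}_{\ell}(X)$ is $T$-invariant, $f_0$ may moreover be replaced by $\mathbb{E}(f_0\mid\mathcal{Z}_{\ell})$ with no error once $f_1,\dots,f_\ell$ are $\mathcal{Z}_{\ell}$-measurable. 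Writing each $f_j=\mathbb{E}(f_j\mid\mathcal{Z}_{\ell})+(f_j-\mathbb{E}(f_j\mid\mathcal{Z}_{\ell}))$ for $1\le j\le\ell$ and expanding, every term carrying a factor $f_j-\mathbb{E}(f_j\mid\mathcal{Z}_{\ell})$ is absorbed into $e(n)$, and one is left with a single main term $\mathcal{N}(n)=\int g_0\cdot T^{n}g_1\cdots T^{\ell n}g_\ell\,d\mu$, all $g_j$ measurable with respect to $\mathcal{Z}_{\ell}(X)$, and $\norm{\mathcal{N}}_\infty\le\norm{g_0}_\infty\le1$.

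\emph{Step 2 (reduction to a single nilsystem).} By the Host--Kra structure theorem $\mathcal{Z}_{\ell}(X)=\varprojlim X_k$, each $X_k=G_k/\Gamma_k$ an $\ell$-step nilsystem with nilrotation $S_k$. Martingale convergence gives $\mathbb{E}(g_j\mid X_k)\to g_j$ in $L^{2}$, and these are further approximated in $L^{2}$ by continuous $F_j^{(k)}\in C(X_k)$ with $\norm{F_j^{(k)}}_\infty\le1$. Replacing the $g_j$ one at a time and bounding each replacement by the $L^{2}$-norm of the difference (Cauchy--Schwarz, uniformly in $n$ since $T$ is measure preserving), one gets
$$
\mathcal{N}(n)=\lim_{k\to\infty}a^{(k)}(n)\quad\text{uniformly in }n,\qquad a^{(k)}(n)=\int_{X_k}F_0^{(k)}\cdot S_k^{\,n}F_1^{(k)}\cdots S_k^{\,\ell n}F_\ell^{(k)}\,d\mu_k.
$$
Hence it suffices to show each $a^{(k)}$ is an $\ell$-step nilsequence: then $\mathcal{N}$ is a uniform limit of $\ell$-step nilsequences, and $e:=a-\mathcal{N}$ satisfies $\lim_{N-M\to\infty}\frac{1}{N-M}\sum_{n=M}^{N-1}|e(n)|^{2}=0$ by Step 1, which is the theorem.

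\emph{Step 3 (correlations over nilsystems), and the main obstacle.} Fix an $\ell$-step nilsystem $X=G/\Gamma$ with nilrotation $b$ and $F_0,\dots,F_\ell\in C(X)$. With $\beta:=(1_G,b,b^{2},\dots,b^{\ell})\in G^{\ell+1}$ and the ($\ell$-step) nilmanifold $X^{\ell+1}=G^{\ell+1}/\Gamma^{\ell+1}$,
$$
a^{(k)}(n)=\int_{X}(F_0\otimes\cdots\otimes F_\ell)\bigl(\beta^{n}\cdot(x,\dots,x)\bigr)\,d\mu(x)=\int_{X^{\ell+1}}(F_0\otimes\cdots\otimes F_\ell)\,d\bigl((\beta^{n})_{*}m_\Delta\bigr),
$$
where $m_\Delta$ is Haar measure on the diagonal sub-nilmanifold $\Delta X\subseteq X^{\ell+1}$, so $(\beta^{n})_{*}m_\Delta$ is Haar measure on the translate $\beta^{n}\cdot\Delta X$. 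One then wants: along the orbit closure $\tilde X=\overline{\{\beta^{n}\Gamma^{\ell+1}:n\in\Z\}}$, which is a sub-nilmanifold of $X^{\ell+1}$ of step $\le\ell$, the assignment $\beta^{n}\Gamma^{\ell+1}\mapsto(\beta^{n})_{*}m_\Delta$ is the restriction of a \emph{continuous} map $z\mapsto\mu_z$ on $\tilde X$; granting this, $a^{(k)}(n)=\Phi(\beta^{n}\Gamma^{\ell+1})$ with $\Phi(z)=\int(F_0\otimes\cdots\otimes F_\ell)\,d\mu_z$ continuous on $\tilde X$, so $a^{(k)}$ is an $\ell$-step nilsequence and the proof is complete. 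The main obstacle is precisely this continuity claim: one must show that translating a fixed sub-nilmanifold along the orbit of $\beta$ yields measures depending continuously on the orbit point, and that the relevant orbit closure is a sub-nilmanifold on which the fibrewise integral $\Phi$ is continuous. This is where the nontrivial input lies --- Leibman's structure and equidistribution theorems for polynomial orbits on nilmanifolds, together with the fact that $\langle\Delta G,\beta\rangle$ is a closed rational subgroup of $G^{\ell+1}$ --- while the ingredients of Steps 1 and 2 (the product-system trick, mean convergence, the Host--Kra structure theorem) are comparatively routine.
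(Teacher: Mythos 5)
This statement is quoted in the paper as background (it is Theorem~1.9 of \cite{BHK05}); the paper gives no proof of it, so there is nothing internal to compare your attempt against. Measured against the original argument of Bergelson--Host--Kra, your outline follows essentially their strategy: reduce to the pro-nilfactor $\mathcal{Z}_\ell$ via the seminorm/characteristic-factor machinery, pass to an inverse limit of $\ell$-step nilsystems, and identify the correlation sequence of a nilsystem with the integral of $F_0\otimes\cdots\otimes F_\ell$ against the translates $\beta^n\cdot\Delta X$ of the diagonal. Steps 1 and 2 are correct as written, with two small caveats: the identity $\nnorm{h\otimes\bar h}_{\ell}=\nnorm{h}_{\ell+1}^{2}$ should be the inequality $\nnorm{h\otimes\bar h}_{\ell,\mu\times\mu}\le\nnorm{h}_{\ell+1,\mu}^{2}$ (which is all you use, and is the form the present paper itself records in Subsection~2.2), and you need the characteristic-factor statement for the possibly non-ergodic product system $X\times X$ and for uniform averages over $[M,N)$ rather than $[0,N)$ --- both standard but worth saying.

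The genuine gap is Step 3, and you have located it precisely but not closed it. The entire content of the theorem, beyond the (by now routine) structure-theory reductions, is the claim that $z\mapsto(\text{Haar measure on }z\cdot\Delta X)$ extends continuously over the orbit closure $\tilde X$ of $\beta$ in $X^{\ell+1}$, equivalently that $n\mapsto\int_{\Delta X}(F_0\otimes\cdots\otimes F_\ell)(\beta^n y)\,dm_\Delta(y)$ is $F(\beta^n\tilde\Gamma)$ for a continuous $F$ on a genuine nilmanifold $\tilde G/\tilde\Gamma$ with $\tilde G=\langle\Delta G,\beta\rangle$ closed. Writing ``granting this, the proof is complete'' and pointing to Leibman's equidistribution theorems is a correct roadmap, not a proof: one still has to verify that $\tilde G$ is a closed subgroup with $\tilde G\cap\Gamma^{\ell+1}$ cocompact in it, that $\Delta X$ is the orbit of a closed subgroup of $\tilde G$, and that the fibrewise integral is continuous on $\tilde G/\tilde\Gamma$ --- this is where \cite{BHK05} spends its effort. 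As it stands your proposal establishes the decomposition with the error term controlled (items (i)'s sup-norm bound and item (ii) both follow from Steps 1--2), but the assertion that $\mathcal{N}$ is a uniform limit of $\ell$-step nilsequences rests on an unproved claim.
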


The polynomial iterates version of this result is due to Leibman (in \cite{L10}). Also, the same author (in \cite{L14}), proved the result of Bergelson, Host and Kra without the ergodicity assumption.

\medskip

All these results depend on the theory of  characteristic factors, a tool that in a more general setting, say for correlation sequences involving actions of commuting transformations, proved to be extremely complex. 

\medskip
 
Quite recently, Frantzikinakis (in \cite{F}) showed that this problem can be settled, since he showed, avoiding the use of the theory of characteristic factors, that
modulo error terms, which are small in uniform density, correlation sequences of actions of commuting transformations are nilsequences (\mbox{\cite[Theorem 1.3]{F}}). More specifically, using the convergent result of Walsh from \cite{W12} and tools from \cite{HK99} and \cite{HK09},  he proved:

\begin{theorem*}[\mbox{\cite[Theorem 1.2]{F}}]\label{T:1'}
 Let   $\ell,m\in \N$ and  $p_{i,j}\in \Z[t],$ $1\leq i\leq \ell, 1\leq j\leq m,$ be polynomials.
Then  there exists $k\in \N,$ $k=k(\ell,m,\max{\deg(p_{i,j})})$,
such that for every system
  $(X,\mathcal{X},\mu, T_1,\ldots, T_\ell)$, functions
 $f_0,f_1,\ldots,f_m\in L^{\infty}(\mu)$ and $\varepsilon>0$, we have
\begin{equation}\label{E:complicated}
\int f_0\cdot (\prod_{i=1}^\ell T_i^{p_{i,1}(n)})
f_1\cdot \ldots \cdot (\prod_{i=1}^\ell T_i^{p_{i,m}(n)})
f_m\, d\mu =\mathcal{N}(n)+e(n), \quad n\in \N,
\end{equation}
where
\begin{enumerate}
\item $(\mathcal{N}(n))$ is a $k$-step nilsequence with $\norm{\mathcal{N}}_{\infty}\leq 1$;

\item \label{E:2} $\displaystyle \lim_{N-M\to \infty} \frac{1}{N-M}\sum_{n=M}^{N-1} |e(n)|^2\leq \varepsilon.$
\end{enumerate}
\end{theorem*}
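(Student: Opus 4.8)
The plan is to carry out the route indicated just before the statement: Walsh's mean convergence theorem \cite{W12} makes all the relevant averages honest limits, the structure theorem for bounded sequences of Host and Kra \cite{HK09} produces the nilsequence, and a van der Corput / PET estimate, drawing on the seminorm techniques of \cite{HK99,HK09}, pins down the step $k$. Abbreviate $T^{\bv}:=T_1^{v_1}\cdots T_\ell^{v_\ell}$ for $\bv\in\Z^\ell$, let $\mathbf p_j:=(p_{1,j},\dots,p_{\ell,j})\colon\Z\to\Z^\ell$, and write the left side of \eqref{E:complicated} as $a(n)=\int f_0\cdot\prod_{j=1}^m T^{\mathbf p_j(n)}f_j\,d\mu$, with $\norm{f_i}_\infty\le1$. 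A preliminary observation, used throughout, is that every multi-parameter average below converges: if $\Psi$ is a nilsequence and $h_1,\dots,h_L\in\Z$, then expanding the integrals exhibits $\prod_{i\le L}a(n+h_i)\cdot\overline{\Psi(n)}$ as a polynomial multiple ergodic average on a product of copies of $X$ with a nilsystem, so by Walsh's theorem $\frac1N\sum_{n\le N}\prod_{i\le L}a(n+h_i)\,\overline{\Psi(n)}$ converges (likewise with conjugates inserted and over intervals $[M,N)$ with $N-M\to\infty$). Hence the uniformity seminorms $\norm{a}_{U^d}$ of \cite{HK09} are genuine limits, and $\frac1{N-M}\sum_{n=M}^{N-1}|a(n)-b(n)|^2$ converges for every nilsequence $b$.

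For a parameter $k$ to be fixed below and any $\delta>0$, I would apply the structure theorem of \cite{HK09} to split $a=a_\st+a_\un$, where $a_\st$ is a uniform limit of $k$-step nilsequences with $\norm{a_\st}_\infty\le1$, $\norm{a_\un}_{U^{k+1}}<\delta$, and $a_\st$ has $U^{k+1}$-dual norm controlled by $\delta$. Picking a single $k$-step nilsequence $\mathcal N$ with $\norm{a_\st-\mathcal N}_\infty<\delta$ and $\norm{\mathcal N}_\infty\le1$ gives property (i) with $e:=a-\mathcal N$. Writing $\langle\cdot,\cdot\rangle$ for the (now well-defined) uniform-density inner product,
\[
\lim_{N-M\to\infty}\frac1{N-M}\sum_{n=M}^{N-1}|e(n)|^2=\langle e,a\rangle-\langle e,\mathcal N\rangle=\langle a_\un,a\rangle-\langle a_\un,\mathcal N\rangle+O(\delta),
\]
using $\norm{a}_\infty,\norm{\mathcal N}_\infty\le1$; and $\langle a_\un,\mathcal N\rangle=O(\delta)$ by the Gowers--Cauchy--Schwarz inequality, since $\mathcal N$ is a $k$-step nilsequence and $\norm{a_\un}_{U^{k+1}}<\delta$. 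So everything reduces to bounding $\langle a_\un,a\rangle=\lim_N\frac1N\sum_{n\le N}a_\un(n)\,\overline{a(n)}$ by $\norm{a_\un}_{U^{k+1}}$.

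That estimate — $|\langle a_\un,a\rangle|\le C\,\norm{a_\un}_{U^{k+1}}^{c}$ for constants $C,c>0$ and $k=k(\ell,m,\max_{i,j}\deg p_{i,j})$ — is the core of the proof. To obtain it I would write
\[
\frac1N\sum_{n\le N}a_\un(n)\,\overline{a(n)}=\int\overline{f_0}\cdot\Big(\frac1N\sum_{n\le N}a_\un(n)\prod_{j=1}^m T^{\mathbf p_j(n)}\overline{f_j}\Big)\,d\mu,
\]
bound it by the Cauchy--Schwarz inequality on $X$, expand the square, and — treating $n\mapsto a_\un(n)$ as a function on a measure preserving system, as in \cite{HK09} — run the Bergelson--Leibman PET induction (in the form used by Walsh) on the $n$-average of the resulting weighted correlation of the $f_j$'s on $X\times X$. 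Each van der Corput step strictly lowers the PET complexity of the family $\{\mathbf p_j\}$ while differencing the weight $a_\un$; after a number of steps controlled by $\ell$, $m$ and the degrees the polynomial iterates become constant and one is left with a power of a Gowers-box average of $a_\un$ of order $k+1$, i.e.\ with $\norm{a_\un}_{U^{k+1}}$, the $f_j$-factors contributing only bounded quantities. This fixes $k$; feeding it back into the previous paragraph and choosing $\delta$ small in terms of $\varepsilon$ yields (ii).

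The step I expect to be hardest is this PET estimate. It must be carried out carefully enough both to certify the precise dependence $k=k(\ell,m,\max\deg p_{i,j})$ and to check that every multi-parameter average produced en route converges (as in the first paragraph) and is \emph{genuinely} a uniformity seminorm of $a_\un$ in the sense of \cite{HK09}, so that the smallness furnished by the structure theorem can be invoked. Two routine points complete the argument: replacing a uniform limit of $k$-step nilsequences by a single one costs only a sup-norm (hence uniform-density) error absorbed into $e$; and no ergodicity of $(X,\CX,\mu)$ is needed, since both Walsh's theorem and the box-seminorm estimates hold for arbitrary systems.
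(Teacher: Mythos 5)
Your overall architecture coincides with the one the paper (following Frantzikinakis) uses: Walsh's theorem guarantees that every multi\-parameter average in sight converges, so the sequence $a$ is $k$-regular; PET induction with the van der Corput inequality shows that $a$ is $k$-anti-uniform, i.e.\ $\limsup_{N-M\to\infty}\big|\frac1{N-M}\sum_{n=M}^{N-1}a(n)b(n)\big|\le C\norm{b}_{U_k(\N)}$ for all bounded $b$, with $k$ depending only on $\ell$, $m$ and $\max\deg p_{i,j}$; and the Host--Kra nilsequence theory supplies the structural input. Those two verifications are exactly Sections 3 and 4 of the paper, and your sketch of them is sound.

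The gap is in how you assemble the decomposition. You invoke a ``structure theorem for bounded sequences'' from \cite{HK09} splitting an arbitrary bounded $a$ as $a_\st+a_\un$ with $a_\st$ a uniform limit of $k$-step nilsequences, $\norm{a_\st}_\infty\le1$ and $\norm{a_\un}_{U_{k+1}(\N)}<\delta$. No such off-the-shelf theorem is available: what \cite{HK09} provides is the \emph{inverse} theorem (a regularly distributed sequence with $\norm{\cdot}_{{\bf I},k}$ bounded below correlates with a $(k-1)$-step nilsequence), and converting it into a decomposition is precisely the content of \cite[Theorem 1.3]{F}, reproduced as Theorem~\ref{T:2*} here. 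The paper manufactures the decomposition by hand: regularity makes $\langle a,y\rangle$ well defined for $y$ in the linear span $Y$ of nilsequences, one takes $\mathcal N=y_0\in Y$ to be a near-minimizer of $\norm{a-y}_2$, minimality forces $\sup\{|\langle a-y_0,y\rangle|\colon y\in Y,\ \norm{y}_2\le1\}\le2\delta$, and only then is the inverse theorem applied to upgrade this to $\norm{a-y_0}_{U_k(\N)}\le(2\delta)^{2^{-k}}$, after which anti-uniformity bounds $\langle a,e\rangle$. Your route also hides a quantitative circularity at the step $\langle a_\un,\mathcal N\rangle=O(\delta)$: the ``direct'' bound $|\langle b,\psi\rangle|\le C_\psi\norm{b}_{U_{k+1}(\N)}$ for a nilsequence $\psi$ carries a constant depending on $\psi$, and your $\mathcal N$ is chosen \emph{after} $\delta$, so $C_{\mathcal N}\delta$ need not be small. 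The projection argument avoids this entirely, since near-orthogonality of $e$ to all of $Y$ (hence to $\mathcal N$ itself) follows from minimality with a constant independent of the complexity of $\mathcal N$. Replacing your black-box appeal by this projection step repairs the proof and brings it in line with the paper's.
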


The arguments that give this result (see \cite{F}) 
 focus on some distinctive properties that correlation sequences as  in \eqref{E:complicated} satisfy. Actually, \cite{F} deals with a more general setting providing a sufficient condition (\mbox{\cite[Theorem 1.3]{F}}) in order a multiple correlation sequence to have the required decomposition in nil$+$nul parts. So, in order to obtain the previous result, 
 one 
 proves that sequences of the form \eqref{E:complicated} satisfy the 
  sufficient condition.

\medskip

In this article, we get an analogous result for the respective integer part polynomial correlation sequences.
 Namely, we prove the following:

\begin{theorem}\label{T:1*}
Let   $\ell,m\in \N$ and  $p_{i,j}\in \R[t]$, $1\leq i\leq \ell, 1\leq j\leq m,$ be polynomials$.$
Then  there exists $k\in \N$, $k=k(\ell,m,\max{\deg(p_{i,j})})$,
such that for every system
  $(X,\mathcal{X},\mu,$ $ T_1,\ldots, T_\ell)$, functions
 $f_0,f_1,\ldots,f_m\in L^{\infty}(\mu)$ and $\varepsilon>0$, we have
\begin{equation}\label{E:complicated*}
\int f_0\cdot (\prod_{i=1}^\ell T_i^{[p_{i,1}(n)]})
f_1\cdot \ldots \cdot (\prod_{i=1}^\ell T_i^{[p_{i,m}(n)]})
f_m\, d\mu =\mathcal{N}(n)+e(n), \quad n\in \N,
\end{equation}
where
\begin{enumerate}
\item $(\mathcal{N}(n))$ is a $k$-step nilsequence with $\norm{\mathcal{N}}_{\infty}\leq 1$;

\item \label{E:2} $\displaystyle \lim_{N-M\to \infty} \frac{1}{N-M}\sum_{n=M}^{N-1} |e(n)|^2\leq \varepsilon.$
\end{enumerate}
\end{theorem}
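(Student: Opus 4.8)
The plan is to reduce the integer-part statement to the already-known integer-polynomial statement (the quoted \cite[Theorem 1.2]{F}) by expressing $[p(n)]$ in terms of ordinary polynomials evaluated at a refined arithmetic progression. The key elementary observation is the following: if $p\in\R[t]$ has degree $d$ with leading coefficient having irrationality properties under control, then on a residue class $n\equiv r\pmod q$ one can write $[p(qj+r)]$ as an honest integer-valued polynomial in $j$ \emph{plus} a generalized-polynomial correction of the form $-\{\text{something}\}$ coming from the fractional parts of the coefficients. The cleanest route, which I would take first, is to split into two cases according to whether all the $p_{i,j}$ have rational coefficients or not. In the rational case, choosing $q$ to be a common denominator of all coefficients, on each class $n=qj+r$ the quantity $p_{i,j}(qj+r)$ is itself a rational-coefficient polynomial in $j$ with denominator dividing $q$, so $[p_{i,j}(qj+r)]$ agrees with an integer polynomial in $j$ up to a $q$-periodic correction; absorbing the periodic part into the transformations (replacing $T_i$ by $T_i$ composed with a fixed power depending on $r$) one lands exactly in the setting of \cite[Theorem 1.2]{F}, and a nilsequence on each of finitely many progressions patches to a nilsequence in $n$ (concatenation of nilsequences along progressions is again a nilsequence, possibly of one higher step, which is why $k$ is allowed to grow).

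The genuinely new work is the case where some $p_{i,j}$ has an irrational coefficient. Here I would invoke the equidistribution machinery: the joint orbit $n\mapsto (\{p_{i,j}(n)\})_{i,j}$ is, after passing to a suitable nilsystem, equidistributed (or at least has a well-understood image), so I would want to realize the sequence $a(n)$ as a uniform limit, or an $L^2$-close average, of correlation sequences in which the iterates $[p_{i,j}(n)]$ are replaced by integer polynomials in $n$ at the cost of an error controlled by the measure of a small neighbourhood of the ``jump set'' $\{t:\{p_{i,j}(t)\}\ \text{near }0\}$. Concretely: approximate the indicator of $[0,1)$ by smooth periodic functions, use the van der Corput / uniformity estimates to show that replacing $T_i^{[p_{i,j}(n)]}$ by a main term plus a small-density error is legitimate in the $\ell^2$-uniform-density sense of conclusion (ii), and then apply \cite[Theorem 1.2]{F} to the main term. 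This is where the promised intermediate results of the paper — mean convergence of the integer-part polynomial multiple ergodic averages, and the vanishing of the limit under suitable seminorm conditions — get used: they are exactly what licenses passing from ``$a(n)$'' to ``a finite combination of integer-polynomial correlation sequences plus a nul sequence.''

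Once the sequence $a(n)$ is written, modulo an error that is $o(1)$ in uniform density after an initial $\varepsilon$-truncation, as a finite sum $\sum_r \one_{n\equiv r(q)} b_r(n)$ where each $b_r$ is (close to) an integer-polynomial correlation sequence of the type handled by \cite[Theorem 1.2]{F}, I would apply that theorem to each $b_r$ to get $b_r=\mathcal{N}_r+e_r$ with $\mathcal{N}_r$ a $k_0$-step nilsequence and $\|e_r\|$ small in density. Summing, $\sum_r \one_{n\equiv r(q)}\mathcal{N}_r(n)$ is again a nilsequence (on the nilmanifold $X_0\times\Z/q\Z$, or by the standard fact that the class of nilsequences is closed under restriction to and concatenation along arithmetic progressions, at the price of increasing the step by at most one), while the error terms add up to something still small in uniform density because a finite sum of density-small sequences supported on disjoint progressions is density-small; the $\varepsilon$ in conclusion (ii) comes from the initial truncation in the approximation of $\one_{[0,1)}$. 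Normalising so that $\|\mathcal{N}\|_\infty\le 1$ is automatic since $|a(n)|\le 1$ and one can post-compose $F$ with a cutoff. The main obstacle I anticipate is the irrational case: making the replacement of $[p_{i,j}(n)]$ by polynomial iterates quantitatively valid in the $\ell^2$-uniform-density norm, uniformly over all systems, which is precisely why the transference principle from flows to $\Z$-actions (attributed to Wierdl in the abstract) is needed — it lets one first prove the clean statement for flows, where the real polynomial iterates act genuinely, and then descend to the discrete action.
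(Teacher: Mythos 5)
Your rational-coefficient case is sound, and is in fact subsumed by the paper's argument (there, rationality enters only in checking that $\{p_{i,j}(n)\}$ lands in $[1-\delta,1)$ with small upper Banach density, which follows from periodicity; irrational coefficients are handled in the same breath via Weyl equidistribution). The genuine gap is in your irrational case. The step you describe --- realizing $a(n)$, after restricting to residue classes and smoothing $\one_{[0,1)}$, as a combination of ``correlation sequences in which the iterates $[p_{i,j}(n)]$ are replaced by integer polynomials in $n$'' plus a small-density error --- cannot be carried out on the original system $(X,\mathcal{X},\mu,T_1,\ldots,T_\ell)$. If some non-constant coefficient is irrational, say $p(n)=\alpha n$ with $\alpha\notin\Q$, then $[\alpha n]=\alpha n-\{\alpha n\}$ differs from every integer polynomial, on every arithmetic progression, by a bounded but equidistributed quantity; since the exponent of $T_i$ must be an integer, an $O(1)$ perturbation of that exponent changes the transformation entirely, so there is no ``main term plus small-density error'' decomposition of $T_i^{[p_{i,j}(n)]}$ inside the given $\Z$-action. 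The only device that gives meaning to ``$T_i$ raised to a real power'' is the suspension flow on $Y=X\times[0,1)^{\ell m}$ with $T_{i,s}(x,b)=(T_i^{[s+b]}x,\{s+b\})$, on which $T_{i,p(n)}$ genuinely factors as a product of commuting transformations with integer powers $n^k$. In the paper this construction is the engine of the entire proof (Theorems~\ref{T:M} and~\ref{T:M2}); in your write-up it appears only as a closing remark, appended to a reduction that it is in fact meant to replace.

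Even granting the passage to the suspension, invoking \cite[Theorem 1.2]{F} as a black box does not close the argument. The comparison between $a(n)$ and the flow correlation sequence $\tilde a(n)$ only yields that $|\delta^{\ell m}a(n)-\tilde a(n)|$ is small outside a set of $n$ of small upper Banach density; undoing the factor $\delta^{\ell m}$ inflates all constants by $\delta^{-\ell m}$ and introduces an additive term $c_\delta$. This is exactly why the paper cannot establish $k$-anti-uniformity of $a(n)$, introduces the weaker notion of $k$-weak-anti-uniformity instead, and must re-prove the decomposition criterion of \cite[Theorem 1.3]{F} under that weaker hypothesis (Theorem~\ref{T:2*}); your proposal has no counterpart of this step, and the $\varepsilon$-loss in conclusion (ii) is precisely its trace. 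Finally, conclusion (ii) asserts a genuine limit, whose existence rests on the $k$-regularity of $a(n)$, which in turn requires the mean convergence result (Theorem~\ref{T:3*}, proved by the same flow transference combined with Walsh's theorem); you acknowledge that these intermediate results are needed, but do not place them at the point where they actually enter.
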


In order to prove this result, we will also give, in Theorem~\ref{T:2*}, a sufficient condition, analogous to the one in Theorem~1.3 of \cite{F}, in order to have the decomposition in nil$+$nul terms of Theorem~\ref{T:1*}. Theorem~\ref{T:1*} then will follow by showing that sequences of the form \eqref{E:complicated*} satisfy the aforementioned sufficient condition.

\medskip

Note that Theorem~\ref{T:1*} is a result in a more general scheme. It can be considered as the first step towards the understanding of the structure of correlation sequences with iterates given by generalized polynomials (see definition below).

\begin{definition*}[\cite{BMc}]\label{D:GP}
We denote by $\mathcal{G}$ the smallest family of functions
$\N\to \Z$ containing $\Z[n]$ that forms an algebra under addition and multiplication and
having the property that for every $f_1,\ldots, f_r \in \mathcal{G}$ and $c_1, \ldots, c_r \in \R,\; [\sum^r_{i=1} c_i f_i] \in \mathcal{G}$ (i.e., $\mathcal{G}$ contains all the functions that can be obtained from regular polynomials with
the help of the floor function and the usual arithmetic operations).
The members of $\mathcal{G}$ are called {\em generalized polynomials}.
\end{definition*}

In the special case of Theorem~\ref{T:1*}, where the polynomials are linear, and so of the form $p_{i,j}(n)=a_{i,j}n$ for $a_{i,j}\in\R,$ $1\leq i\leq \ell,$ $1\leq j\leq m,$ the next result 
gives more precise information about the dependence of $k$ on the other parameters.

\begin{theorem}\label{T:s}
For $\ell, m\in \N$ let $(X,\mathcal{X},\mu, T_1,\ldots, T_\ell)$ be a system, $a_{i,j}\in \R,$ $1\leq i\leq \ell,$ $1\leq j\leq m$    and $f_0,f_1,$ $\ldots,f_m\in L^{\infty}(\mu).$ Then, for every $\varepsilon>0,$ we have the
 decomposition
\begin{equation}\label{E:mulcom}
\int f_0\cdot (\prod_{i=1}^\ell T_i^{[a_{i,1} n]})f_1\cdot \ldots \cdot (\prod_{i=1}^\ell T_i^{[a_{i,m} n]})f_m\ d\mu=\mathcal{N}(n)+e(n), \quad  n\in \N,
\end{equation}
where
\begin{enumerate}
\item $(\mathcal{N}(n))$ is an $m$-step nilsequence with $\norm{\mathcal{N}}_{\infty}\leq 1$;

\item \label{E:2} $\lim_{N-M\to \infty} \frac{1}{N-M}\sum_{n=M}^{N-1} |e(n)|^2\leq \varepsilon$.
\end{enumerate}
\end{theorem}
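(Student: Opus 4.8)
\textbf{Proof proposal for Theorem~\ref{T:s}.}

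The plan is to reduce the integer-part linear case to the integer (genuine $\Z$-action) linear case, which is covered by the Frantzikinakis theorem quoted above (or by its proof specialized to linear iterates, which yields the sharp $m$-step bound). The key observation is that $[a_{i,j}n]$ can be written as $a_{i,j}n - \{a_{i,j}n\}$, and the fractional parts $\{a_{i,j}n\}$ are governed by a rotation on a torus; thus the whole correlation sequence \eqref{E:mulcom} should be recovered as a correlation sequence of a \emph{single} extended $\Z$-action, namely the original commuting transformations acting together with a torus rotation. Concretely, I would first partition each real $a_{i,j}$ into its integer and fractional parts and, using the transference principle attributed to Wierdl (reducing statements about $\Z$-actions to statements about flows and back), realize the suspension flows $(T_i^t)_{t\in\R}$ so that $T_i^{[a_{i,j}n]}$ is literally $T_i^{a_{i,j}n}$ composed with a correction $T_i^{-\{a_{i,j}n\}}$; alternatively, and more in the spirit of \cite{F}, I would pass to the product system $X \times \T^{d}$ where $d$ counts the distinct $a_{i,j}$'s, with the transformations acting as $T_i\times (\text{rotation by }a_{i,j})$, and absorb the fractional-part corrections into the functions $f_j$ by replacing each $f_j$ with a function on $X\times\T^d$ that on the fiber over $\theta$ equals an appropriate translate of $f_j$ under the flow.

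After this reduction, the averages in question become genuine polynomial (in fact linear) multiple ergodic averages for commuting $\Z$-actions on the extended system, but with functions that are only measurable, not continuous, in the torus variable. The second step is therefore an approximation argument: approximate each $f_j$ on $X\times\T^d$ in $L^2$ by functions that are continuous (indeed smooth, or trigonometric polynomials) in the $\T^d$ variable, use the mean convergence of multiple ergodic averages for integer-part polynomial iterates — which the paper establishes as an intermediate result — together with the quantitative control on the error term from Theorem~1.3 of \cite{F}, and push the $\varepsilon$ through. For the continuous-in-$\theta$ approximants, the correlation sequence on $X\times\T^d$ is, after expanding the torus rotation into characters, a finite sum of correlation sequences of the form \eqref{E:complicated} for the $\Z$-actions $T_1,\dots,T_\ell$ twisted by a linear phase $\e(n\cdot\text{(lattice point)})$; each of these decomposes as an $m$-step nilsequence plus a nul sequence by the linear case of Frantzikinakis's theorem, and a linear phase times an $m$-step nilsequence is again an $m$-step nilsequence (the phase contributes a circle factor, which is $1$-step, and the product of an $m$-step and a $1$-step nilsequence is $m$-step). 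Summing finitely many such pieces preserves being an $m$-step nilsequence and the nul part stays nul, while the approximation error is absorbed into $e(n)$ using part~\eqref{E:2}'s $\varepsilon$-slack.

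The main obstacle I expect is the handling of the discontinuity of the fractional-part function, i.e., making the reduction to the torus-extended system rigorous despite $t\mapsto \{t\}$ being discontinuous and $t\mapsto T_i^{-\{t\}}f_j$ being only an $L^2$-measurable (not norm-continuous unless one uses the suspension flow carefully) family. This is precisely where the transference principle of Wierdl, and the requirement that the $T_i$ be \emph{invertible} so that genuine $\R$-flows (suspensions) exist, must be invoked; one must check that the set of $n$ for which the fractional-part corrections land near the discontinuity has small uniform density, so its contribution can be swept into the error term. A secondary, more bookkeeping-level obstacle is tracking that no worse than $m$ steps appear: one must use the \emph{linear} structure crucially, since for genuinely polynomial iterates the nilpotency step grows with the degree, whereas here the extended linear system of commuting transformations on $X\times\T^d$ still has all its characteristic factors of order $\le m$ by the Host--Kra machinery (\cite{HK99}, \cite{HK09}) as used in \cite{F}, and the extra torus factor does not raise the step.
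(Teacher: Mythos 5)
Your overall strategy --- realize $T_i^{[a_{i,j}n]}$ inside an extension of the system, apply the linear case of \cite[Theorem 1.2]{F} there, and transfer the decomposition back --- is in the right spirit (the paper does pass to an extension), but two of your steps do not work as described, and the second is exactly the difficulty the paper's argument is organized around. First, the extension that realizes integer-part iterates is not a direct product of the $T_i$ with torus rotations but the special (skew-product) flow on $Y=X\times[0,1)^{\ell m}$, where the power of $T_i$ applied in the base depends on the fibre coordinate, namely $T_i^{[s+b_{i,j}]}$. Because of this the torus variable does not decouple from the $X$-dynamics, so your plan to ``expand the rotation into characters'' and obtain finitely many correlation sequences of the form \eqref{E:complicated} twisted by linear phases $\e(n\lambda)$ does not go through. (It is also unnecessary: on the extension the correlation sequence is already literally a linear-iterate correlation sequence of $\ell m$ commuting $\Z$-actions.)

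Second, and more seriously: the sequence \eqref{E:mulcom} is an integral over the single fibre $\{b=0\}$, which is $\nu$-null, so it is not controlled by $L^2(\nu)$-approximation on the extension. The remedy (Theorem~\ref{T:M2}) is to thicken the fibre by inserting the indicator of $[0,\delta]^{\ell m}$ into $\hat f_0$; condition (ii) --- the small upper Banach density of those $n$ with some $\{a_{i,j}n\}$ near $1$, which you correctly identified --- then yields $a(n)=\delta^{-\ell m}\tilde a(n)+(\text{density-small error})$. If you now apply \cite[Theorem 1.2]{F} to $\tilde a$ and divide by $\delta^{\ell m}$, the nilsequence component is only bounded by $\delta^{-\ell m}$, so conclusion (i), $\norm{\mathcal{N}}_\infty\leq 1$, is lost; a continuous truncation might repair this, but you would have to carry that out and it is not addressed in your sketch. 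This normalization problem is precisely why the paper does \emph{not} use \cite[Theorem 1.2]{F} as a black box on the extension: it isolates the weaker property of $k$-weak-anti-uniformity (the constant $C_\delta=C\delta^{-\ell m}$ and the additive term $c_\delta$ absorb the two losses above), re-proves the abstract decomposition under that weaker hypothesis (Theorem~\ref{T:2*}), and then verifies $(m+1)$-weak-anti-uniformity (Theorem~\ref{T:M2} plus at most $m$ rounds of van der Corput/PET via Lemma~\ref{L:VDC}) and $(m+1)$-regularity (via Theorem~\ref{T:3*} and Proposition~\ref{P:nilkey}) for the sequence \eqref{E:mulcom}. Your count of $m$ steps is correct, but the decomposition must be assembled through the modified abstract theorem rather than transferred from the extension.
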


We will also give an application of the previous results.

\medskip

 For $k\in \N,$ we consider the following  subsets of $\ell^\infty(\N)$:
$$
\mathcal{A}_k:=\Big\{(\psi(n))  \colon \psi \text{ is a } k\text{-step nilsequence}\Big\};
$$
$$
\mathcal{B}_k:=\Big\{\int f_0\cdot  \prod_{i=1}^{k}T^{(\ell_i-\ell_{k+1})n}f_i \, d\mu\colon  (X,\mathcal{X}, \mu,T) \text{ is a system}, f_i\in L^\infty(\mu) \text{ and }  \ell_i=\frac{(k+1)!}{i} \Big\};
$$

$$
\mathcal{C}_k:=\Big\{\int f_0\cdot (\prod_{i=1}^k T_i^{[a_{i,1} n]})f_1\cdot \ldots \cdot (\prod_{i=1}^k T_i^{[a_{i,k} n]})f_k\, d\mu\colon (X,\mathcal{X}, \mu, T_1,\ldots, T_k) \text{ is a system,}$$ $$ a_{1,1}, \ldots, a_{k,1}, a_{1,2}, \ldots, a_{k,2},\ldots, a_{1,k},\ldots, a_{k,k} \in \R \text{ and } f_i\in L^\infty(\mu)\Big\}.\;\;
$$

It is proven in \cite{F} that the sets $\mathcal{A}_k$ and $\mathcal{B}_k$ are linear subspaces of $\ell^\infty(\N).$ By the same reasoning (with the space $\mathcal{B}_k$), we have that $\mathcal{C}_k$ is a linear subspace of $\ell^\infty(\N)$ as well.

\medskip

Also, from Theorem~1.4 in \cite{F}, we have that, modulo sequences small in uniform density,  the two subspaces
$\mathcal{A}_k$ and $\mathcal{B}_k$
 coincide. Namely, we have that 
 
 $$
\overline{\mathcal{A}_{k}}^{\norm{\cdot}_2}=\overline{\mathcal{B}_k}^{\norm{\cdot}_2},
$$

where $\norm{\cdot}_2$ is the seminorm on $\ell^\infty(\N)$ defined  by
\begin{equation}\label{E:seminorm}
\norm{a}_2^2:=\limsup_{N-M\to \infty} \frac{1}{N-M}\sum_{n=M}^{N-1}|a(n)|^2.
\end{equation}


Using Theorem~\ref{T:s}, we will prove the following:

\begin{theorem} \label{T:a}
For every $k\in \N$ we have

$$
\overline{\mathcal{A}_{k}}^{\norm{\cdot}_2}=\overline{\mathcal{B}_k}^{\norm{\cdot}_2}=\overline{\mathcal{C}_k}^{\norm{\cdot}_2}.
$$
\end{theorem}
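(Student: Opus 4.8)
\textbf{Proof proposal for Theorem~\ref{T:a}.}

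The plan is to establish the new equality $\overline{\mathcal{C}_k}^{\norm{\cdot}_2}=\overline{\mathcal{A}_k}^{\norm{\cdot}_2}$, since the chain $\overline{\mathcal{A}_k}^{\norm{\cdot}_2}=\overline{\mathcal{B}_k}^{\norm{\cdot}_2}$ is already available from \cite[Theorem~1.4]{F}. I would prove the two inclusions separately. For $\overline{\mathcal{C}_k}^{\norm{\cdot}_2}\subseteq\overline{\mathcal{A}_k}^{\norm{\cdot}_2}$, take any generator of $\mathcal{C}_k$, i.e.\ a sequence of the form $\int f_0\cdot(\prod_i T_i^{[a_{i,1}n]})f_1\cdots(\prod_i T_i^{[a_{i,k}n]})f_k\,d\mu$. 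By Theorem~\ref{T:s} this sequence is $\mathcal{N}(n)+e(n)$ with $\mathcal{N}$ an $m=k$-step nilsequence and $\norm{e}_2^2\le\varepsilon$; letting $\varepsilon\to 0$ shows the generator lies in $\overline{\mathcal{A}_k}^{\norm{\cdot}_2}$. Since $\mathcal{C}_k$ is a linear subspace (as noted in the excerpt, by the same reasoning as for $\mathcal{B}_k$) and $\overline{\mathcal{A}_k}^{\norm{\cdot}_2}$ is a $\norm{\cdot}_2$-closed subspace, the span closure of all such generators, which is all of $\overline{\mathcal{C}_k}^{\norm{\cdot}_2}$, is contained in $\overline{\mathcal{A}_k}^{\norm{\cdot}_2}$.

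For the reverse inclusion $\overline{\mathcal{A}_k}^{\norm{\cdot}_2}\subseteq\overline{\mathcal{C}_k}^{\norm{\cdot}_2}$, it suffices — again because $\mathcal{C}_k$ is a linear subspace and using the already-known identification of $\overline{\mathcal{A}_k}^{\norm{\cdot}_2}$ with $\overline{\mathcal{B}_k}^{\norm{\cdot}_2}$ — to show that every generator of $\mathcal{B}_k$ lies in $\overline{\mathcal{C}_k}^{\norm{\cdot}_2}$, or more directly that every generator of $\mathcal{A}_k$ (a $k$-step nilsequence) does. The natural route is to realize an arbitrary $k$-step nilsequence, up to a $\norm{\cdot}_2$-small error, as a sequence in $\mathcal{C}_k$. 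Here I would exploit the freedom in $\mathcal{C}_k$ to choose the $a_{i,j}\in\R$ and the commuting transformations $T_1,\dots,T_k$: taking $T_i$ to be rotations (or unipotent affine maps) on a nilmanifold and choosing the linear coefficients $a_{i,j}$ appropriately, the correlation $\int f_0\cdot(\prod_i T_i^{[a_{i,1}n]})f_1\cdots$ computes, after expanding the integer parts via the bracket identity $[x]=x-\{x\}$, a nilsequence in the variable $n$ plus controlled lower-order bracket terms. Alternatively, and perhaps more cleanly, one observes that $\mathcal{B}_k\subseteq\mathcal{C}_k$ \emph{on the nose}: the defining sequence of $\mathcal{B}_k$ uses a single transformation $T$ with integer iterates $(\ell_i-\ell_{k+1})n$, which is the special case of $\mathcal{C}_k$ with $T_i=T^{\ell_i-\ell_{k+1}}$ (a legitimate commuting family) and $a_{i,j}=\delta_{ij}$ so that $[a_{i,j}n]=[\delta_{ij}n]\in\{0,n\}$, giving exactly $\int f_0\cdot\prod_i T_i^{[n]\text{ in slot }i}f_i$; one then checks this matches the $\mathcal{B}_k$ generator. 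This yields $\mathcal{B}_k\subseteq\mathcal{C}_k$, hence $\overline{\mathcal{B}_k}^{\norm{\cdot}_2}\subseteq\overline{\mathcal{C}_k}^{\norm{\cdot}_2}$, and combined with $\overline{\mathcal{A}_k}^{\norm{\cdot}_2}=\overline{\mathcal{B}_k}^{\norm{\cdot}_2}$ finishes this direction.

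Putting both inclusions together with the known equality gives $\overline{\mathcal{A}_k}^{\norm{\cdot}_2}=\overline{\mathcal{B}_k}^{\norm{\cdot}_2}=\overline{\mathcal{C}_k}^{\norm{\cdot}_2}$, as claimed. The step I expect to be most delicate is verifying cleanly that $\mathcal{B}_k\subseteq\mathcal{C}_k$ (or, if that containment does not hold verbatim because of a mismatch in the number of transformations or the shape of the exponents, replacing it by an approximate containment): one must match the single-transformation linear iterates $(\ell_i-\ell_{k+1})n$ of $\mathcal{B}_k$ against the multi-transformation integer-part linear iterates of $\mathcal{C}_k$, taking care that the integer-part operation does not introduce an obstruction — and here the identity $[m n]=mn$ for integer $m$ is exactly what makes the embedding exact. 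The appeal to Theorem~\ref{T:s} in the first inclusion is routine given that the theorem is already proved; the only subtlety there is passing $\varepsilon\to 0$ inside the closure, which is immediate since $\overline{\mathcal{A}_k}^{\norm{\cdot}_2}$ is $\norm{\cdot}_2$-closed.
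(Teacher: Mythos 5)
Your proposal is correct and follows essentially the same route as the paper: the containment $\mathcal{B}_k\subseteq\mathcal{C}_k$ is immediate from the definitions (via $T_i=T^{\ell_i-\ell_{k+1}}$ and $a_{i,j}=\delta_{ij}$), Theorem~\ref{T:s} with $\ell=m=k$ gives $\overline{\mathcal{C}_k}^{\norm{\cdot}_2}\subseteq\overline{\mathcal{A}_k}^{\norm{\cdot}_2}$, and the known equality $\overline{\mathcal{A}_k}^{\norm{\cdot}_2}=\overline{\mathcal{B}_k}^{\norm{\cdot}_2}$ from \cite[Theorem~1.4]{F} closes the chain. The alternative nilmanifold-realization sketch in your second paragraph is unnecessary; the exact containment you correctly identify as the clean route is what the paper uses.
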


In order to prove Theorem~\ref{T:1*}, as an intermediate step, we also prove the following result on the existence of the integer part polynomial multiple mean convergence:

\begin{theorem}\label{T:3*}
For $\ell, m\in \N$ let $(X,\mathcal{X},\mu,T_1,\ldots, T_\ell)$ be a system, $p_{i,j}\in\R[t],$ polynomials, $1\leq i\leq \ell,$ $1\leq j\leq m$ and  $f_1,\ldots,f_m\in L^{\infty}(\mu).$ The averages
 \begin{equation}\label{E:lou}
 \frac{1}{N-M}\sum_{n=M}^{N-1}
(\prod_{i=1}^\ell T_i^{[p_{i,1}(n)]})f_1\cdot\ldots\cdot(\prod_{i=1}^\ell T_i^{[p_{i,m}(n)]})f_m
\end{equation}  converge in $L^2(\mu)$ as $N-M\to \infty.$
\end{theorem}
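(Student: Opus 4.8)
The plan is to reduce the integer-part polynomial averages in \eqref{E:lou} to ordinary (integer) polynomial averages, for which mean convergence is known by Walsh's theorem \cite{W12}, at the cost of passing to a larger system built from a suspension/flow construction. The key observation is that for a real polynomial $p(n)$, the quantity $[p(n)]$ differs from a genuine integer-valued polynomial by a bounded "fractional part" error $\{p(n)\}\in[0,1)$, and the obstruction to directly applying Walsh is precisely the presence of these fractional parts. The transference principle attributed to Wierdl in the abstract is exactly the tool for handling this: one embeds each $\Z$-action $T_i$ into a measure preserving flow $(S_i^t)_{t\in\R}$ on a larger probability space, so that $T_i = S_i^1$, and then $T_i^{[p_{i,j}(n)]}$ becomes $S_i^{[p_{i,j}(n)]}$, which one wants to compare with $S_i^{p_{i,j}(n)}$.

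Concretely, I would proceed as follows. First, by the transference/suspension construction, replace the system $(X,\mathcal X,\mu,T_1,\dots,T_\ell)$ with a system $(\wt X,\wt{\mathcal X},\wt\mu)$ carrying commuting measure preserving $\R$-flows $(S_i^t)$ restricting to the $T_i$ on an embedded copy of $X$; it suffices to prove $L^2$-convergence of the corresponding flow averages there. Second, introduce an auxiliary variable: writing $p_{i,j}(n) = [p_{i,j}(n)] + \{p_{i,j}(n)\}$, one has $S_i^{[p_{i,j}(n)]} = S_i^{p_{i,j}(n)} \circ S_i^{-\{p_{i,j}(n)\}}$. The difficulty is that $\{p_{i,j}(n)\}$ is not polynomial. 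The standard remedy is an approximation argument: approximate the indicator-type weight by finitely many intervals, i.e. partition $[0,1)$ into small subintervals and, on each, approximate $S_i^{-\{p_{i,j}(n)\}}$ by a fixed $S_i^{-c}$; the resulting error in $L^2$ is controlled by the modulus of continuity of $t\mapsto S_i^t g$ in $L^2$, which is small by strong continuity of the flow. Third — and this is where the polynomial structure is genuinely used — one must show that for each fixed choice of "cell" the indicator $\one_{\{p_{i,j}(n)\}\in I}$ can itself be absorbed, which is handled by Weyl-type equidistribution of polynomial sequences modulo $1$ together with a van der Corput / PET-style reduction, or alternatively by viewing $n\mapsto([p_{i,1}(n)],\dots)$ itself (after the flow passage) through Walsh's theorem applied on the product of $\wt X$ with a torus encoding the fractional parts. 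Either way, one reduces to averages $\frac1{N-M}\sum_{n=M}^{N-1} \prod_{i,j} S_i^{q_{i,j}(n)} F_j$ with $q_{i,j}$ honest integer-coefficient (or rational-coefficient, after clearing denominators and splitting into residue classes) polynomials acting on an enlarged system, which converge in mean by \cite{W12}.

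The main obstacle I anticipate is making the fractional-part approximation uniform in the length $N-M$ of the averaging window and compatible with the multilinearity: one is perturbing several functions $f_1,\dots,f_m$ simultaneously by flows $S_i^{-\{p_{i,j}(n)\}}$, and a naive triangle-inequality estimate could introduce an error that does not vanish as $N-M\to\infty$ because it depends on cancellation that is only visible after averaging. The clean way around this is to first establish that the averages in \eqref{E:lou} are Cauchy in $L^2$ by a $2^m$-term telescoping combined with the van der Corput lemma — exactly the strategy underlying \cite{W12} — now run on the flow system with the extra fractional-part coordinate; one shows that all lower-complexity averages arising in the telescoping are themselves of the same integer-part polynomial type (in possibly more variables), so an induction on an appropriate PET-type complexity ordering closes the argument. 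Thus the real content is: (i) the Wierdl transference from $\Z$-actions to flows, which the paper flags as central, and (ii) checking that the integer-part polynomial family is closed under the van der Corput operations up to errors small in uniform density, which is where the equidistribution of $\{p(n)\}$ enters.
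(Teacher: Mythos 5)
Your high-level strategy --- suspend to a flow and invoke Walsh --- is the paper's strategy, but two of the steps you propose for carrying it out have genuine problems. First, you never correctly explain how Walsh's theorem (which is about commuting $\Z$-actions with \emph{integer} polynomial exponents) applies to the flow averages $\frac{1}{|I_N|}\sum_{n}\prod_i S_i^{p_{i,j}(n)}\hat f_j$ with $p_{i,j}\in\R[t]$. Your suggestion to clear denominators and split into residue classes only works for rational coefficients, and ``a torus encoding the fractional parts'' is not an argument. The point in the paper is a one-line identity: writing $p(t)=a_rt^r+\cdots+a_1t+a_0$, the time-$p(n)$ map of the flow factors as $T_{p(n)}=(T_{a_r})^{n^r}\cdots(T_{a_1})^{n}T_{a_0}$, a product of \emph{fixed} commuting measure preserving transformations $T_{a_r},\dots,T_{a_1}$ raised to the integer polynomial powers $n^r,\dots,n$; Walsh then applies verbatim on the suspension space. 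No approximation of $S_i^{-\{p_{i,j}(n)\}}$ by a fixed $S_i^{-c}$, no modulus of continuity, and no weight $\one_{\{p_{i,j}(n)\}\in I}$ is needed: on the zero fiber of the suspension $Y=X\times[0,1)^{\ell m}$ one has exactly $S_i^{p(n)}\hat f(x,0)=f(T_i^{[p(n)]}x)$, so the integer part appears for free.

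Second, your fallback plan --- rerunning the telescoping/van der Corput/PET induction directly on the integer-part family, claiming it is ``closed under the van der Corput operations up to errors small in uniform density'' --- is a dead end: $[p(n+h)]-[p(n)]$ is not of the form $[q(n)]$ for a polynomial $q$ but a genuine generalized polynomial, and this lack of closure is precisely why the generalized-polynomial version of the theorem is left as an open conjecture in the paper. Finally, the passage from $L^2(\nu)$-convergence on $Y$ back to the measure-zero fiber $X\times\{0\}$, which you correctly identify as the crux, is handled by a soft argument: if the fiber averages were not Cauchy, Fubini produces a nearby fiber $(b_{i,j})$ with $0\le b_{i,j}\le\delta$ on which they are Cauchy, and $T_i^{[p_{i,j}(n)+b_{i,j}]}=T_i^{[p_{i,j}(n)]}$ except for those $n$ with $\{p_{i,j}(n)\}\ge 1-\delta$, a set whose upper Banach density tends to $0$ with $\delta$ by Weyl equidistribution (or by periodicity mod $1$ when all non-constant coefficients are rational). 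This is where equidistribution enters --- not in a PET closure statement --- and it resolves the uniformity-in-$(N-M)$ worry you raise without any telescoping.
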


Also, in some special cases of Theorem~\ref{T:3*}, via the theory of characteristic factors (equivalently, via the seminorms $\nnorm{\cdot}_k$) using Theorem~1.2 and Proposition~5.1 from \cite{CFH}, we prove convergence to 0 for the previous averages. More specifically, we prove the following (for the definitions, see Section 2):

\begin{theorem}\label{T:F1}
For $\ell\in \N$ let $(X,\mathcal{X},\mu,T_1,\ldots, T_\ell)$ be a system, $a_{1},\ldots,a_\ell\in\R,$ be non-zero real numbers, $r_1,\ldots,r_\ell\in \N,$ be pairwise distinct positive integers  and  $f_1,\ldots,f_\ell\in L^{\infty}(\mu).$ Then there exists $k=k(\ell,\max{r_i})\in \N$ such that if $\nnorm{f_i}_{k,\mu,T_i}=0$ for some $1\leq i\leq \ell,$ then the averages
$$ \frac{1}{N-M}\sum_{n=M}^{N-1}
T_1^{[a_1 n^{r_1}]} f_1\cdot\ldots\cdot T_\ell^{[a_\ell n^{r_\ell}]} f_\ell$$  converge to $0$ in $L^2(\mu)$ as $N-M\to \infty.$
\end{theorem}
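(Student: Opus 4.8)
The plan is to reduce the integer-part averages in Theorem~\ref{T:F1} to genuine polynomial (integer-iterate) averages, to which the machinery of \cite{CFH} applies directly. The first step is to handle the floor function: writing $[a_i n^{r_i}] = a_i n^{r_i} - \{a_i n^{r_i}\}$, one would like to absorb the fractional-part fluctuation. The standard device here — and the one the paper advertises as ``a transference principle communicated by M.~Wierdl'' — is to pass to a suspension/flow construction, or alternatively to use a van~der~Corput--type argument to show that replacing $T_i^{[a_i n^{r_i}]}$ by a suitable measurable family does not affect the limit, or disappears on the relevant characteristic factor. Concretely, I would first prove the statement assuming each $a_i \in \Z$ (so the floor is vacuous), where Theorem~1.2 and Proposition~5.1 of \cite{CFH} give the existence of $k = k(\ell, \max r_i)$ and the vanishing under $\nnorm{f_i}_{k,\mu,T_i}=0$, and then transfer the irrational/real case back to this one.

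The core of the argument, once the floor is dealt with, is the seminorm-control statement: for averages of the shape $\prod_i T_i^{q_i(n)} f_i$ with the $q_i$ polynomials of pairwise distinct degrees $r_i$ and nonzero leading coefficients, the $i$-th function can be controlled by a Host--Kra--type seminorm $\nnorm{f_i}_{k,\mu,T_i}$ taken with respect to the single transformation $T_i$. This is exactly the content of \cite[Theorem~1.2, Proposition~5.1]{CFH}: their PET-induction / van~der~Corput scheme shows that if one of the functions has vanishing seminorm of sufficiently high (but bounded, depending only on $\ell$ and the degrees) level, the average tends to $0$ in $L^2$. So the plan is: (i) invoke the mean convergence from Theorem~\ref{T:3*} so that we only need to identify the limit; (ii) reduce to integer coefficients via the transference principle; (iii) apply \cite{CFH} to conclude that the limit is $0$ under the hypothesis $\nnorm{f_i}_{k,\mu,T_i}=0$.

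The main obstacle I anticipate is step (ii): making the reduction from real $a_i$ to integer coefficients while preserving the seminorm hypothesis and without degrading the value of $k$ in an uncontrolled way. The subtlety is that $\{a_i n^{r_i}\}$ is not itself a polynomial sequence, so one cannot literally rewrite $T_i^{[a_i n^{r_i}]}$ as $S_i^{n^{r_i}}$ for a new transformation $S_i$ — instead one must either (a) use a flow $(T_i^t)_{t\in\R}$ built over $T_i$, run the $\R$-valued polynomial argument of \cite{CFH} there (their results are stated in enough generality to cover $\R$-actions, or can be adapted), and then restrict back to the $\Z$-action using Wierdl's transference, checking that the relevant seminorm of the flow agrees with $\nnorm{\cdot}_{k,\mu,T_i}$; or (b) apply a direct van~der~Corput estimate showing the fractional parts are equidistributed enough to be swallowed by the averaging. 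Option (a) is cleaner and matches the paper's stated toolkit. A secondary technical point is checking that ``pairwise distinct degrees $r_i$'' is genuinely what \cite{CFH} needs for the single-transformation seminorm control (as opposed to distinct leading terms or some Weyl-complexity condition), and that the nonvanishing of the $a_i$ is used precisely to keep the degrees from collapsing; I would verify this against the exact hypotheses of \cite[Proposition~5.1]{CFH} before writing the details.
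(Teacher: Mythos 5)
Your option (a) is essentially the paper's route: it builds the suspension flow over $(X,\mathcal{X},\mu,T_1,\ldots,T_\ell)$ with constant ceiling $1$, transfers the vanishing of the averages back to the $\Z$-action via the quantitative comparison of Theorem~\ref{T:F4} (with Weyl equidistribution of $(a_in^{r_i})$ supplying condition~(ii)), and then observes that on the suspension $T_{i,a_in^{r_i}}=(T_{i,a_i})^{n^{r_i}}$, so no $\R$-valued adaptation of \cite{CFH} is needed --- Theorem~1.2 of \cite{CFH} applies verbatim to the commuting $\Z$-actions $S_i=T_{i,a_i}$ with the pairwise distinct-degree iterates $n^{r_i}$. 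The one step you flagged but did not carry out --- comparing the flow seminorm with $\nnorm{\cdot}_{k,\mu,T_i}$ --- is the real technical content of the paper's argument (Lemma~\ref{L:F6}): it is an inequality $\nnorm{\hat f}_{k,\nu,S}\le c\,\nnorm{f}_{k+1,\mu,T}$ rather than an agreement, it costs one level of the seminorm (which is why $k$ is taken one higher than in \cite{CFH}), and its proof needs the counting estimate of Lemma~\ref{L:F5} together with the Host--Kra inequalities.
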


Moreover, we have the following result that applies to a larger class of polynomial iterates:

\begin{theorem}\label{T:F2}
For $\ell\in \N$ let $(X,\mathcal{X},\mu,T_1,\ldots, T_\ell)$ be a system, $p_{1},\ldots,p_\ell\in\R[t],$ be non-constant polynomials with distinct degrees and highest degree $d=\deg(p_1)$ and  $f_1,\ldots,f_\ell\in L^{\infty}(\mu).$ Then there exists $k=k(\ell,d)\in \N$ such that if $\nnorm{f_1}_{k,\mu,T_1}=0,$ then the averages
$$ \frac{1}{N-M}\sum_{n=M}^{N-1}
T_1^{[p_{1}(n)]} f_1\cdot\ldots\cdot T_\ell^{[p_{\ell}(n)]} f_\ell$$  converge to $0$ in $L^2(\mu)$ as $N-M\to \infty.$
\end{theorem}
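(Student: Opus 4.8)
The plan is to reduce the real-polynomial statement to a known integer-polynomial result by replacing each iterate $[p_i(n)]$ with an honest integer polynomial, at the cost of an error that is negligible after averaging. The starting point is the companion result for integer polynomials: for non-constant $q_1,\dots,q_\ell\in\Z[t]$ with distinct degrees, there is $k=k(\ell,d)$ so that vanishing of $\nnorm{f_1}_{k,\mu,T_1}$ forces the averages of $T_1^{q_1(n)}f_1\cdots T_\ell^{q_\ell(n)}f_\ell$ to $0$ in $L^2$; this is exactly the flavor of Proposition~5.1 and Theorem~1.2 of \cite{CFH} quoted in the excerpt, specialized to the one-seminorm hypothesis. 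So the real content is showing that the integer parts behave, on average, like integer polynomials.

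First I would handle the leading coefficients. Write $p_i(n)=\alpha_i n^{d_i}+(\text{lower order})$ with $d_1>d_2>\cdots>d_\ell$. If every $\alpha_i$ were rational, one could pass to a sub-progression $n\equiv n_0 \pmod{Q}$ on which $[p_i(n)]$ agrees with a genuine polynomial in the new variable, and splitting the average into finitely many such progressions reduces to the integer case directly; the distinct-degree property is preserved, and $k(\ell,d)$ only depends on $\ell$ and $d$ so the bound is uniform across the residue classes. For irrational leading coefficients, the right tool is a van der Corput / PET-type argument combined with equidistribution of $(\{p_i(n)\})_n$: one shows that the ``fractional error'' factors $T_i^{[p_i(n)]-\bar p_i(n)}$, where $\bar p_i$ is a nearby integer polynomial, are equidistributed enough that the off-diagonal terms in a van der Corput iteration average to zero. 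Concretely, I would run the PET induction on the tuple $(p_1,\dots,p_\ell)$, at each step applying van der Corput in $n$ with a shift $h$; the differencing replaces $[p_i(n+h)]-[p_i(n)]$ by $[p_i'(n)h]+O(1)$-type expressions, and the key point is that a bounded additive error in an exponent, spread equidistributed over $\Z$, contributes a convergent-to-zero average by a standard total-ergodicity or unique-ergodicity argument on a suitable skew-product extension. Iterating the PET scheme lowers the complexity until one lands on a $\ell=1$, non-constant polynomial base case, which is governed by a single-transformation polynomial ergodic theorem and the Host--Kra seminorm $\nnorm{\cdot}_{k}$; tracking the number of differencing steps yields the dependence $k=k(\ell,d)$.

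The step I expect to be the main obstacle is controlling the integer-part error terms uniformly in the PET induction: after one van der Corput step, the differenced expression involves iterates like $T_i^{[p_i(n+h)]-[p_i(n)]}$, which is \emph{not} of the form $T_i^{\text{(genuine polynomial in }n)}$ because the two floor functions do not combine. One must argue that this equals $T_i^{g_{i,h}(n)}$ for a generalized polynomial $g_{i,h}$ whose ``polynomial part'' has the expected degree and whose bounded discrepancy, being equidistributed in $\Z$ as $n$ varies (for $h$ outside a measure-zero set, or after a further averaging in $h$), can be absorbed. I would make this rigorous by realizing the whole system on a product with a nilsystem (or just a rotation) that tracks the fractional parts $\{p_i(n)\}$ — exactly the philosophy by which \cite{BMc} reduce generalized-polynomial dynamics to ordinary polynomial dynamics on a larger nilsystem — so that $[p_i(n)]$ becomes a genuine polynomial sequence evaluated at a point whose orbit closure is a sub-nilmanifold; the distinct-degree hypothesis guarantees the relevant orbit is equidistributed and hence the seminorm-vanishing hypothesis still annihilates the average. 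Once this structural reduction is in place, the conclusion follows from the cited convergence-to-zero results for genuine polynomial iterates, with $k$ depending only on $\ell$ and $d$ through the PET/seminorm bookkeeping.
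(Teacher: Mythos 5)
Your proposal takes a genuinely different route from the paper, but it has concrete gaps that prevent it from being a proof. The paper does not run a PET induction on the floor functions at all: it transfers the problem to the suspension flow $Y=X\times[0,1)^{\ell}$ (Theorem~\ref{T:F4}), where $T_i^{[p_i(n)]}$ becomes the base coordinate of the flow element $T_{i,p_i(n)}=T_{i,a_r}^{n^r}\cdots T_{i,a_1}^{n}T_{i,a_0}$, i.e.\ a product of \emph{commuting} transformations with genuine integer monomial iterates; the fractional-part condition (ii) controls the discrepancy between the flow and the integer parts, and then Proposition~\ref{P:F8} of \cite{CFH} (via the notion of an $\R$-nice family, Proposition~\ref{P:F3}) finishes the job. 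Your reduction never produces this structure, and in particular never addresses that the natural integer-polynomial surrogate for a single $T_i^{[p_i(n)]}$ is a product of several commuting transformations, which is why the paper needs Proposition~\ref{P:F8} rather than just Theorem~\ref{T:F7}.

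The two most serious gaps are these. First, the hypothesis $\nnorm{f_1}_{k,\mu,T_1}=0$ lives on the original system, but every version of your reduction moves the dynamics to a larger system (a skew product tracking fractional parts, or a product with a nilsystem \`a la \cite{BMc}); you assert that ``the seminorm-vanishing hypothesis still annihilates the average'' on that larger system but give no argument. This transfer is exactly the content of the paper's Lemma~\ref{L:F6}, which bounds $\nnorm{\hat f}_{k,\nu,S}$ by $c\,\nnorm{f}_{k+1,\mu,T}$ and costs one level of the seminorm; without such a comparison your reduction proves nothing about the stated hypothesis. Second, your two reduction mechanisms are each defective as stated: rationality of only the \emph{leading} coefficients does not make $[p_i(Qn'+n_0)]$ a polynomial in $n'$ (you need all coefficients rational), and the claim that the bounded discrepancies $[p_i(n+h)]-[p_i(n)]-(\text{polynomial})$ ``contribute a convergent-to-zero average by a standard total-ergodicity argument'' is precisely the unresolved difficulty with generalized-polynomial iterates --- the paper explicitly records the general generalized-polynomial statement as an open conjecture, and the whole point of the suspension trick is to sidestep any such equidistribution-of-errors argument.
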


Actually, we remark at this point that a more general result is proven in Proposition~\ref{P:F3} which implies Theorem~\ref{T:F2}. 

\medskip

Since for weak mixing systems $(X,\mathcal{X},\mu,T)$ we have that $\int fd\mu =0$ implies $\nnorm{f}_{k,\mu,T}=0$ for every $k\in \N,$ we 
deduce: 

\begin{corollary}\label{C:F10}
For $\ell \in \N$ let $(X,\mathcal{X},\mu,T_i),$ $1\leq i\leq \ell,$ commuting weak mixing systems,  $p_1,\ldots, p_\ell \in \R[t]$ be non-constant polynomials with distinct degrees 
and $f_1,\ldots,f_\ell\in L^\infty(\mu).$ Then the averages 
$$ \frac{1}{N-M}\sum_{n=M}^{N-1}
T_1^{[p_{1}(n)]} f_1\cdot\ldots\cdot T_\ell^{[p_{\ell}(n)]} f_\ell$$  converge to $\prod_{i=1}^\ell\int f_i \; d\mu$ in $L^2(\mu)$ as $N-M\to \infty.$
\end{corollary}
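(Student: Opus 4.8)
The plan is to deduce Corollary~\ref{C:F10} from Theorem~\ref{T:F2} together with a standard weak mixing reduction. Since the systems are weak mixing, for each $i$ and each $k\in\N$ we have the implication $\int f_i\, d\mu = 0 \Rightarrow \nnorm{f_i}_{k,\mu,T_i}=0$; this is exactly the fact recalled just before the corollary statement. So the strategy is: split each $f_i$ as $f_i = \int f_i\, d\mu + (f_i - \int f_i\, d\mu) =: c_i + f_i'$, where $c_i$ is a constant and $\int f_i'\, d\mu = 0$, hence $\nnorm{f_i'}_{k,\mu,T_i}=0$ for all $k$. Expanding the product $\prod_{i=1}^\ell T_i^{[p_i(n)]} f_i = \prod_{i=1}^\ell T_i^{[p_i(n)]}(c_i + f_i')$ by multilinearity yields $2^\ell$ terms indexed by subsets $S\subseteq\{1,\dots,\ell\}$, where the $S$-term is $\big(\prod_{i\notin S} c_i\big)\cdot \prod_{i\in S} T_i^{[p_i(n)]} f_i'$.

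Next I would handle each of these terms separately. The term with $S=\emptyset$ is the constant $\prod_{i=1}^\ell c_i = \prod_{i=1}^\ell \int f_i\, d\mu$, which is precisely the claimed limit, so its Cesàro averages trivially converge to it. For any nonempty $S$, pick any $i_0 \in S$; after relabelling so that $i_0$ plays the role of index $1$ (note that the hypothesis ``non-constant polynomials with distinct degrees'' is permutation-invariant, and we may always arrange the chosen index to be the one of highest degree among those in $S$, or simply apply the symmetric statement), Theorem~\ref{T:F2}---applied to the subsystem with transformations $\{T_i : i\in S\}$ and the function $f_{i_0}'$ which satisfies $\nnorm{f_{i_0}'}_{k,\mu,T_{i_0}}=0$ for the relevant $k=k(|S|,\deg)$---gives that the averages $\frac{1}{N-M}\sum_{n=M}^{N-1} \prod_{i\in S} T_i^{[p_i(n)]} f_i'$ converge to $0$ in $L^2(\mu)$. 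Multiplying by the bounded scalar $\prod_{i\notin S} c_i$ preserves convergence to $0$. Summing the $2^\ell$ convergent pieces, the Cesàro averages of the full product converge in $L^2(\mu)$ to $\prod_{i=1}^\ell\int f_i\, d\mu$, as required.

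One technical point deserves care: Theorem~\ref{T:F2} as stated requires the vanishing of the seminorm for the function attached to the transformation $T_1$ specifically (the one paired with the highest-degree polynomial $p_1$). To invoke it for an arbitrary index $i_0\in S$ one should either observe that the conclusion of Theorem~\ref{T:F2} is in fact symmetric in the indices---any reordering of a tuple of polynomials with distinct degrees is again a tuple with distinct degrees, and one may restate the hypothesis as ``$\nnorm{f_{i_0}}_{k,\mu,T_{i_0}}=0$ for \emph{some} $i_0$'' after relabelling---or appeal to the more general Proposition~\ref{P:F3} mentioned in the remark following Theorem~\ref{T:F2}, which presumably already allows the distinguished index to be arbitrary. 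This is the only place where a small amount of bookkeeping is needed; everything else is the routine multilinear expansion.

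I do not expect a genuine obstacle here: the corollary is a direct packaging of Theorem~\ref{T:F2} via the weak mixing dichotomy, and the proof is essentially the three-line argument above. The mildly delicate step is ensuring that, for each nonempty $S$, the function whose seminorm vanishes can legitimately be taken as the ``$T_1$-function'' in the hypothesis of Theorem~\ref{T:F2} (or its cited generalization), i.e.\ that we are free to permute the roles of the transformations; once that is granted the rest is immediate.
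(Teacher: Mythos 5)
Your proof is correct and follows the same basic strategy as the paper: reduce to mean-zero functions using the weak mixing hypothesis (so that the seminorms $\nnorm{\cdot}_{k,\mu,T_i}$ vanish for every $k$) and then invoke Theorem~\ref{T:F2}. The only difference is the decomposition. The paper uses the telescoping identity of Furstenberg--Katznelson--Ornstein, which produces $\ell$ terms; in the $j$-th term only the $j$-th function is replaced by its mean-zero part while the factors with smaller index remain the original $f_i$, so to apply Theorem~\ref{T:F2} one must first order the polynomials by decreasing degree and telescope in the matching direction (the paper compresses all of this into the sentence ``we can actually assume that $\int f_1\,d\mu=0$''). You instead expand fully by multilinearity into $2^\ell$ terms indexed by subsets $S$, in each of which \emph{every} non-constant factor is mean-zero; this costs nothing and has the advantage that for each nonempty $S$ you may take the distinguished index $i_0$ to be the one of highest degree in $S$, so the hypothesis of Theorem~\ref{T:F2} (vanishing of the seminorm for the function attached to the highest-degree polynomial) is met with no reordering subtlety. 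The technical point you flag is genuine --- Theorem~\ref{T:F2}, as derived from Proposition~\ref{P:F3} and the notion of $\R$-nice family, is not symmetric in the indices --- and your expansion resolves it more transparently than the paper's one-line reduction does.
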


\begin{remark*}
It is an open problem if the same result (of Corollary~\ref{C:F10}) is true when the non-constant polynomials $p_i$ are essentially distinct (i.e., $p_i-p_j$ is non-constant for $i\neq j$).
\end{remark*}



As Theorem~\ref{T:1*}, Theorem~\ref{T:3*} is a first result towards establishing mean convergence for the more general case where the iterates are given by generalized polynomials. For these two results, i.e., dealing with integer part polynomial iterates, the method of proof relies on a respective convergence  result for flows. This technique was first used in \cite{EL} by E.~Lesigne, in order to prove that when a sequence of real positive numbers is good for the single term pointwise ergodic theorem, then the respective sequence of its integer parts is also good 
(see also \cite{BJW}). This method later adapted by M.~Wierdl (in \cite{Mate})  to deal with multiple term averages (see Theorem~\ref{T:M} below).

\medskip

\noindent Before we close this section, we state the following conjecture:

\begin{conjecture*}
Theorems~\ref{T:1*} and ~\ref{T:3*} hold if $p_{i,j}$ are generalized polynomials.
\end{conjecture*}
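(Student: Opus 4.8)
The plan is to reduce the generalized polynomial case to the real polynomial case already settled in Theorems~\ref{T:1*} and~\ref{T:3*}, at the cost of passing to a product of the given system with an auxiliary nilsystem. The flow-transference method of Theorem~\ref{T:M}, on which the integer-part results rest, has no analogue here: a genuinely generalized iterate such as $n\mapsto [\alpha n[\beta n]]$ cannot be interpolated by the integer part of a flow orbit. The new ingredient I would use instead is the structure theory of Bergelson and Leibman for bounded generalized polynomials, according to which every bounded $\psi\in\mathcal{G}$ can be written as $\psi(n)=\Phi(b^n w_0)$ for a nilsystem $(W=H/\Lambda,b)$, a base point $w_0$, and a bounded, Riemann integrable $\Phi\colon W\to\R$ whose discontinuity set has zero Haar measure. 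Since the orbit $(b^n w_0)$ is equidistributed in the subnilmanifold it generates, the discontinuities of $\Phi$ are met only on a set of $n$ of zero uniform density, precisely the kind of error the nul-term $e(n)$ is designed to absorb.

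I would first dispose of the \emph{bounded} case, where the argument is clean and, I expect, essentially complete. If each $g_{i,j}\in\mathcal{G}$ is bounded, then $\Phi_{i,j}$ takes finitely many integer values, so $T_i^{g_{i,j}(n)}f=\sum_k \one_{\{\Phi_{i,j}=k\}}(b^n w_0)\,T_i^k f$ is a finite sum of fixed powers weighted by Riemann integrable functions of $b^n w_0$. Substituting into \eqref{E:complicated*} and expanding, the correlation sequence becomes a finite sum of products of nilsystem weights $\one_{\{\Phi_{i,j}=k\}}(b^n w_0)$ against constants $\int f_0\cdot\prod T_i^{k_{i,j}}f_j\,d\mu$. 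Approximating each indicator from above and below by continuous functions agreeing with it off a set of small Haar measure turns every weight into an honest nilsequence and changes the sequence only on a set of $n$ of small uniform density; letting the approximation tend to zero and using the triangle inequality for $\norm{\cdot}_2$ gives a $k$-step nilsequence plus a nul-term, settling the analogue of Theorem~\ref{T:1*}. The same reduction, combined with Walsh's convergence theorem \cite{W12} applied on $X\times W$, yields the $L^2$-convergence of Theorem~\ref{T:3*} for bounded iterates.

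For the general case I would peel off the polynomial growth of each iterate. Writing every floor as $[x]=x-\{x\}$ and expanding, each $g_{i,j}(n)$ decomposes as $p_{i,j}(n)+\sum_s q_s(n)\psi_s(n)$, where $p_{i,j}\in\R[t]$, each $q_s\in\R[t]$, and each $\psi_s\in\mathcal{G}$ is bounded and of strictly lower complexity, so that an induction on degree is available. The genuinely polynomial part $p_{i,j}$ is exactly what Theorems~\ref{T:1*} and~\ref{T:3*} already handle, and the bounded fluctuations $\psi_s=\Phi_s(b^n w_0)$ are governed by the representation above; the difficulty is concentrated in the \emph{mixed} terms $T_i^{[q_s(n)\,\Phi_s(b^n w_0)]}$, whose exponent is an unbounded product of an ordinary polynomial with a nilsystem coordinate.

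This mixed term is the main obstacle, and it is the reason the statement is only a conjecture. Such an exponent is neither a plain real polynomial, so Theorem~\ref{T:1*} does not apply, nor interpolatable by a flow, so the transference of Theorem~\ref{T:M} does not apply. Resolving it would require extending the nilsystem representation so as to realize $n\mapsto T_i^{[q_s(n)\Phi_s(b^n w_0)]}$ as a genuine polynomial action of a nilpotent group on a further extension of $X\times W$, while verifying that the step of that group, and hence the $k$ promised in the conclusion, stays bounded purely in terms of $\ell$, $m$, and $\max\deg(p_{i,j})$. A secondary, quantitative difficulty is that the nul-term in Theorem~\ref{T:1*} is controlled only up to a prescribed $\varepsilon$, so the Riemann approximation of the discontinuous weights must be interleaved with the choice of $\varepsilon$ to keep the cumulative error small in uniform density. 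Overcoming the mixed-term obstacle — a uniform nilpotent realization of polynomial-times-nilfunction exponents — is where the essential new work lies.
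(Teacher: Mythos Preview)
The statement you were given is not a theorem but a \emph{conjecture}: the paper has no proof of it. Immediately after stating the conjecture the paper remarks that for the nil$+$nul decomposition (Theorem~\ref{T:1*}) the conjecture ``in its generality is completely open,'' and that for the convergence statement (Theorem~\ref{T:3*}) only the single-term case is known, via Bergelson--Leibman~\cite{BL}; even the case of all transformations equal is not known. There is therefore nothing in the paper to compare your attempt against.

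Your write-up is consistent with this: you do not claim a proof, you sketch a plausible strategy based on the Bergelson--Leibman representation of bounded generalized polynomials on a nilmanifold and you correctly isolate the point at which it breaks down --- the ``mixed'' exponents of the form $[q(n)\Phi(b^n w_0)]$, a polynomial times a bounded nilfunction, which neither the flow-transference of Theorem~\ref{T:M} nor the real-polynomial result of Theorem~\ref{T:1*} can absorb. That diagnosis is reasonable and goes somewhat further than the paper's own one-paragraph remark; your bounded-iterate reduction is also in the right spirit. But to be clear: the paper does not prove this statement, and your proposal, as you yourself note, does not either.
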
 

Note that for Theorem~\ref{T:1*}, i.e., decomposition in the form nil$+$nul, the conjecture in its generality is completely open. As for Theorem~\ref{T:3*}, only the one-term case is known due to Bergelson and Leibman (\cite{BL}). Even the case where all the transformations are equal is not known.

\medskip

\noindent {\bf Notation.}
  We denote by $\N$ the set of positive integers.
If $(a(n))$ is a bounded sequence, we denote by  $\displaystyle \limsup_{N-M\to \infty} \Big|\frac{1}{N-M}\sum_{n=M}^{N-1}a(n) \Big|$ the limit 
$
\displaystyle \lim_{N\to \infty} \sup_{M\in \N} \Big|\frac{1}{N}\sum_{n=M}^{M+N-1}a(n)\Big|
$ (this limit exists by subadditivity). For a measurable function $f$ on a measure space $X$ with a transformation $T:X\to X,$ we denote with $Tf$ the composition $f\circ T.$ Given transformations $T_i:X\to X,$ $1\leq i\leq \ell,$ we denote with $\prod_{i=1}^\ell T_i$ the composition $T_1\circ \cdots\circ T_\ell.$

\subsection*{ Acknowledgements.} I would like to express my indebtedness to N.~Frantzikinakis who introduced to me this problem and for his in-depth suggestions throughout the writing procedure of this article. These suggestions corrected mistakes from the original version of this paper and led me to additional results. I would also like to thank M.~Wierdl for making an unpublished note of his, available to me. The proof of Theorem~\ref{T:M} below is based on this note. Finally, I wish to thank V. Bergelson for helpful discussions during the preparation of this paper.

\section{Definitions and Main Ideas}

\subsection{Anti-uniformity and Regularity}
 In order to prove Theorem~\ref{T:1*}, we follow the arguments of \cite{F}. For reader's convenience, in this subsection, we repeat most of the part of Section 2 from \cite{F}. First we recall the notion of the uniformity seminorms (a slight variant of the uniformity seminorms defined by B.~Host and B.~Kra in \cite{HK09}).

\begin{definition*}[\cite{F}]
Let $k\in \N$ and $a\colon \N\to \C$ be a bounded sequence.

\begin{enumerate}
\item
Given a sequence of intervals  ${\bf I}=(I_N)$   with  lengths tending to infinity,
we say that the   sequence $(a(n))$ is {\em distributed
 regularly along ${\bf I}$}  if the limit
 $$
 \lim_{N\to \infty} \frac{1}{|I_N|}\sum_{n\in I_N} a_1(n+h_1)\cdot\ldots\cdot a_r(n+h_r)
$$
exists for every $r\in \N$ and  $h_1,\ldots, h_r\in \N$, where $a_i$ is either $a$ or $\bar{a}.$

\item If ${\bf I}$  is as in (i)  and $(a(n))$ is  distributed
 regularly along ${\bf I}, $ we define inductively
$$\norm{a}_{{\bf I}, 1}:=
\lim_{N\to \infty} \Big|\frac{1}{|I_N|}\sum_{n\in I_N} a(n)\Big|;
$$
and for $k\geq 2$ (one  can  show as in \mbox{\cite[Proposition 4.3]{HK09}} that the next limit exists)
$$
\norm{a}_{{\bf I}, k}^{2^{k}} :=\lim_{H\to \infty} \frac{1}{H}\sum_{h=1}^H \norm{\sigma_ha\cdot \bar{a}}^{2^{k-1}}_{{\bf I}, k-1},
$$
where $\sigma_h$ is the shift transformation defined by $(\sigma_ha)(n):=a(n+h).$
\item If $(a(n))$ is a bounded sequence we let
$$
\norm{a}_{U_k(\N)}:=\sup_{{\bf I}}\norm{a}_{{\bf I}, k},
$$
where the sup is taken over all sequences of intervals ${\bf I}$  with lengths tending to infinity along which
the sequence  $(a(n))$ is  distributed
 regularly$.$
\end{enumerate}
\end{definition*}

Next, we recall the notions of $k$-anti-uniformity and $k$-regularity from \cite{F}. These notions play an important role to the decomposition of multiple correlation sequences in the form nil+nul. We will adapt them, dealing with the notion of regularity and a notion similar to anti-uniformity for our setting (i.e., the notion of weak-anti-uniformity).

\begin{definition*}[\cite{F}]
Let $k\in \N.$ We say that the bounded sequence $a\colon \N\to \C$ is
\begin{enumerate}
\item
 {\em $k$-anti-uniform}  if
 there exists
 $C:=C(k,a)$ such that
 $$
 \limsup_{N-M\to
  \infty} \Big|\frac{1}{N-M}\sum_{n=M}^{N-1} a(n)b(n)\Big|\leq C \norm{b}_{U_{k}(\mathbb{N})}$$ for every $b\in \ell^{\infty}(\N).$

\item  {\em $k$-regular} if
 the limit
 $$
 \lim_{N-M\to \infty} \frac{1}{N-M}\sum_{n=M}^{N-1} a(n)\psi(n)
 $$ exists for every $(k-1)$-step nilsequence $(\psi(n)).$
\end{enumerate}
\end{definition*}

 It turns out that the previous properties ($k$-anti-uniformity and $k$-regularity) give a sufficient condition on the required decomposition of sequences of the form \eqref{E:complicated} (\mbox{\cite[Theorem 1.3]{F}}). 

\medskip

 In our case though, for sequences of the form \eqref{E:integer}, we cannot prove that they are $k$-anti-uniform, but we can prove something weaker. Namely, in Theorem~\ref{T:M2} we will prove that these sequences are $k$-weak-anti-uniform (see definition below).

\begin{definition*}
Let $k\in \N.$ We say that the bounded sequence $a:\N\to\C$ is
{\em $k$-weak-anti-uniform}  if for every $0<\delta<1$
 there exists a constant 
 $C_{\delta}:=C_\delta(k,a)$ and a term $c_\delta,$ with $c_{\delta}\to 0$ as $\delta\to 0^+,$  such that for every $b\in \ell^{\infty}(\N)$
 $$
 \limsup_{N-M\to
  \infty} \Big|\frac{1}{N-M}\sum_{n=M}^{N-1} a(n)b(n)\Big|\leq C_\delta \norm{b}_{U_{k}(\mathbb{N})}+c_\delta.$$  
\end{definition*}

  The same proof of Theorem~1.3 in \cite{F}, works in our case as well, namely, we have the following (we only sketch the proof, for details, see the proof of Theorem~1.3 from \cite{F}): 

\begin{theorem}[\mbox{\cite[Theorem 1.3]{F}}]\label{T:2*}
For $k\in \N$ let $a\colon \N\to \C$ be a sequence with $\norm{a}_\infty\leq 1$ that is $k$-weak-anti-uniform and $k$-regular$.$
Then, for every $\varepsilon>0,$ we have the decomposition
$$
a(n)=\mathcal{N}(n)+e(n), \quad n\in \N,
$$
where
\begin{enumerate}
\item \label{E:1a} $(\mathcal{N}(n))$ is a $(k-1)$-step nilsequence with $\norm{\mathcal{N}}_{\infty}\leq 1$;

\item \label{E:1b} $\displaystyle \lim_{N-M\to \infty} \frac{1}{N-M}\sum_{n=M}^{N-1} |e(n)|^2\leq \varepsilon.$
\end{enumerate}
\end{theorem}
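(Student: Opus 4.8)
**Proof proposal for Theorem~\ref{T:2*}.**

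The plan is to follow the strategy of \cite[Theorem 1.3]{F} verbatim, checking that the weakening of $k$-anti-uniformity to $k$-weak-anti-uniformity costs us only an extra additive term that can be absorbed into the error. First I would recall the decomposition engine: given that $a$ is $k$-regular, for any $\varepsilon>0$ one produces, via a soft functional-analytic argument (an $L^2$-type projection onto the closed subspace spanned by $(k-1)$-step nilsequences inside $(\ell^\infty(\N),\norm{\cdot}_2)$, combined with the Bessel-type inequality and the approximation results from \cite{BHK05} and \cite{HK09} that exhibit the relevant limiting objects as nilsequences), a splitting
$$
a(n)=\mathcal{N}(n)+e(n),\qquad n\in\N,
$$
where $\mathcal{N}$ is a $(k-1)$-step nilsequence with $\norm{\mathcal{N}}_\infty\le 1$, and where the error $e$ is orthogonal (in the $\norm{\cdot}_2$-sense, for the relevant averaging along all interval sequences) to every $(k-1)$-step nilsequence. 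The point of $k$-regularity is precisely that it guarantees the relevant correlation limits exist so that this projection is well-defined and $\mathcal{N}$ is genuinely a nilsequence rather than merely a limit of such.

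The second and decisive step is to bound $\norm{e}_2$. Here one invokes the inverse theorem for the uniformity seminorms $\norm{\cdot}_{U_k(\N)}$ from \cite{HK09}: if $\norm{e}_{U_k(\N)}$ is small then $e$ correlates negligibly with $a$ in the $\norm{\cdot}_2$-average, and conversely the only obstruction to $\norm{e}_2$ being small is a nontrivial correlation of $e$ with some $(k-1)$-step nilsequence — which we have arranged to vanish. Concretely, one writes $\norm{e}_2^2 = \limsup_{N-M\to\infty}\frac1{N-M}\sum_{n=M}^{N-1} e(n)\overline{e(n)}$ and, using $e=a-\mathcal{N}$ and the fact that $e$ is $\norm{\cdot}_2$-orthogonal to the nilsequence $\mathcal{N}$, reduces to estimating $\limsup_{N-M\to\infty}\bigl|\frac1{N-M}\sum_{n=M}^{N-1} a(n)\overline{e(n)}\bigr|$. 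Now the weak-anti-uniformity hypothesis applied with $b=\bar e$ (note $\norm{\bar e}_\infty\le 2$, so a harmless constant appears) gives, for each $0<\delta<1$,
$$
\limsup_{N-M\to\infty}\Bigl|\frac1{N-M}\sum_{n=M}^{N-1} a(n)\overline{e(n)}\Bigr|
\;\le\; C_\delta\,\norm{e}_{U_k(\N)} + c_\delta .
$$
The term $C_\delta\norm{e}_{U_k(\N)}$ is handled exactly as in \cite{F}: one runs the decomposition with a tolerance that forces $\norm{e}_{U_k(\N)}$ below a threshold depending on $C_\delta$, so this contribution is $\le\varepsilon/2$; and since $c_\delta\to0$ as $\delta\to0^+$, one first fixes $\delta$ small enough that $c_\delta\le\varepsilon/2$, and only then chooses the decomposition parameter. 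This nested choice of parameters — $\delta$ first (to kill $c_\delta$), then the $U_k$-threshold (to kill $C_\delta\norm{e}_{U_k(\N)}$, where $C_\delta$ is by then a fixed constant) — is the one place where the argument genuinely departs from \cite{F}, and it is the step I would write out most carefully.

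The main obstacle, then, is not conceptual but bookkeeping: one must make sure the quantifiers are ordered so that $C_\delta$ is frozen before the uniformity-norm tolerance is selected, and that the constant $C_\delta$ does not itself depend on the output of the decomposition (it depends only on $k$ and $a$, which is exactly what the definition of $k$-weak-anti-uniformity provides). Everything else — existence and nilsequence-nature of $\mathcal{N}$, the $\norm{\cdot}_\infty\le1$ bound, the orthogonality of $e$ to nilsequences, and the passage from $U_k(\N)$-smallness to $\norm{\cdot}_2$-smallness — is quoted directly from the proof of \cite[Theorem 1.3]{F} and the seminorm theory of \cite{HK09}, so for these I would simply refer the reader there, as the theorem statement already indicates.
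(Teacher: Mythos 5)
Your proposal is correct and follows essentially the same route as the paper: project $a$ onto the span of $(k-1)$-step nilsequences using the $\norm{\cdot}_2$ seminorm (whose relevant inner products exist by $k$-regularity), control $\norm{e}_{U_k(\N)}$ via the near-orthogonality of $e$ to bounded nilsequences, and then bound $\norm{e}_2^2$ through $\langle a,e\rangle$ using weak-anti-uniformity, with exactly the quantifier ordering you describe (fix $\delta$ first so that $c_\delta\le\varepsilon/4$, then choose the minimization tolerance in terms of the now-frozen $C_\delta$). The only cosmetic difference is that the paper works with an almost-minimizer $y_0$ (so $e$ is only $2\delta$-orthogonal to the unit ball of nilsequences rather than exactly orthogonal), which does not affect your bookkeeping.
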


\begin{proof}(\mbox{\cite[Theorem 1.3]{F}})  
 We first remark that the limit
 $$
  \lim_{N-M\to \infty} \frac{1}{N-M}\sum_{n=M}^{N-1} |a(n)|^2 \quad \text{ exists}.
$$

Let
 $
Y:=\Big\{(\psi(n)) \colon \psi \text{ is a } (k-1)\text{-step nilsequence}\Big\}
$
and
$
X:=\text{span}\{{Y,a\}}.
$

On $X\times X$ we define the bilinear form
$$
\langle f, g \rangle:=\lim_{N-M\to \infty} \frac{1}{N-M}\sum_{n=M}^{N-1} f(n)\overline{g}(n).
$$
  Since the limit exists for $f,g\in X$, this bilinear form induces the  seminorm
$$
\norm{f}_2:=\sqrt{\langle f, f \rangle}
$$
(note that this is the restriction, on the space $X,$ of the seminorm defined in \eqref{E:seminorm}).

Let $\varepsilon>0$. Then, there exists $\delta_0>0$ such that for all $(b(n))\in \ell^\infty(\N)$ we have $\limsup_{N\to\infty} \Big| \frac{1}{|I_N|}\sum_{n\in I_N}a(n)b(n) \Big| \leq C_{\delta_0}\norm{b}_{U_k(\N)}+\frac{\varepsilon}{4}.$ We can assume that $ C_{\delta_0}\geq 1.$

\noindent For $d:=\inf\{ \norm{a-y}_2\colon y\in Y\}$ and $\delta:=\Big(\frac{\varepsilon}{8C_{\delta_0}}\Big)^{2^k},$ there exists  $y_0\in Y$  such that
\begin{equation}\label{E:distance}
\norm{a-y_0}_2^2\leq d^2+\delta^2.
\end{equation}
We can also assume that $\norm{y_0}_\infty\leq 1.$ Using \eqref{E:distance}  we can prove that
 \begin{equation}\label{E:ay0}
\sup_{y \in Y\colon \norm{y}_2\leq 1} |\langle a-y_0,y\rangle| \leq 2\delta.
 \end{equation}

 Since the set $\{y\in Y\colon \norm{y}_2\leq 1\}$ contains all the $(k-1)$-step nilsequences that are bounded by $1$, we have
that
\begin{equation}\label{E:uniform}
\norm{a-y_0}_{U_{k}(\N)}\leq (2\delta)^{2^{-k}}.
\end{equation}

We let
$$
\mathcal{N}:=y_0, \quad e:=a-y_0.
$$
Then
$$
a=\mathcal{N}+e,
$$
and $(\mathcal{N}(n))$ is an $(k-1)$-step nilsequence with $\norm{\mathcal{N}}_\infty\leq 1.$
Using the fact that $a$ is $k$-weak-anti-uniform and the Relation \eqref{E:uniform}, we get  that
  $$
  |\langle a,e\rangle|\leq C_{\delta_0}\norm{e}_{U_{k}(\N)}+\frac{\varepsilon}{4}=C_{\delta_0}\norm{a-y_0}_{U_{k}(\N)}+\frac{\varepsilon}{4}\leq \varepsilon/2.
  $$
Furthermore,  from  \eqref{E:ay0} we have
  $$
  |\langle \mathcal{N},e\rangle|\leq \varepsilon/2.
  $$
 Combining the last  two estimates  we deduce that
$$
 \norm{e}_2^2 =\langle e,e\rangle\leq
 |\langle a,e\rangle|+ |\langle \mathcal{N},e\rangle|
\leq \varepsilon,
$$
from which we have the result.
\end{proof}

 So, in order to prove Theorem~\ref{T:1*}, it suffices to show that the sequences of the form \eqref{E:integer} are $k$-weak-anti-uniform and $k$-regular for some $k.$  We will deal with these issues in the next two sections.

\medskip

 In order to show the $k$-regularity, we will modify a trick of M.~Wierdl (\cite{Mate}) by passing a convergence result for flows to the convergence of the respective integer parts (Theorem~\ref{T:M}). By making use of a convergence result for sequences with iterates  of integer valued polynomials due to Walsh (\cite{W12}), we will get the desired property.

\medskip

 In order to show $k$-weak-anti-uniformity, we will use, like in the previous step, an analogous trick as Wierdl's (Theorem~\ref{T:M2}) and the fact that the sequences with iterates of integer valued polynomials are $k$-anti-uniform (which we will get from \cite{F}). This last result is obtained by an inductive procedure known as PET (Polynomial Exhaustion Technique) induction, introduced in \cite{Be87a}. 

\medskip

\subsection{The seminorms $\nnorm{\cdot}_k$}

In this subsection, we follow \cite{HK99} and \cite{CFH} for the definition of the seminorms $\nnorm{\cdot}_k$ which we will use in order to prove Theorem~\ref{T:F1} and Proposition~\ref{P:F3}, which will imply Theorem~\ref{T:F2} and Corollary~\ref{C:F10}. The inductive definition that we use here follows from \cite{HK99} (in the ergodic case) and \cite{CFH} (in the general case) and the use of von Neumann's ergodic theorem.

\medskip

Let $(X,\mathcal{X},\mu,T)$ be a system and $f\in L^\infty(\mu).$  we define inductively the seminorms $\nnorm{f}_{k,\mu,T}$ as follows: 
$$ \nnorm{f}_{1,\mu,T}:= \norm{\E(f|\mathcal{I})}_{L^2(\mu)},$$
where $\mathcal{I}$ is the $\sigma$-algebra of $T$-invariant sets and $\E(f|\mathcal{I})$ is the conditional expectation of $f$ with respect to $\mathcal{I},$ satisfying $\int \E(f|\mathcal{I})\;d\mu=\int f\;d\mu$ and $ T\E(f|\mathcal{I})=\E(Tf|\mathcal{I}).$

\medskip

For $k\geq 1,$ we let
$$\nnorm{f}^{2^{k+1}}_{k+1,\mu,T}:=\lim_{N-M\to\infty}\frac{1}{N-M}\sum_{n=M}^{N-1}\nnorm{\bar{f}\cdot T^n f}^{2^k}_{k,\mu,T}. $$
All these limits exist and define seminorms (see \cite{HK99}). By using the von Neumann's ergodic theorem, we get $\nnorm{f}^2_{1,\mu,T}=\lim_{N-M\to\infty}\frac{1}{N-M}\sum_{n=M}^{N-1}\int \bar{f}\cdot T^n f\;d\mu,$ and more generally, for every $k\geq 1,$ we have that
\begin{equation}\label{E:norms}
\nnorm{f}^{2^k}_{k,\mu,T}:=\lim_{N-M\to\infty}\frac{1}{N-M}\sum_{n_1=M}^{N-1} \ldots \lim_{N-M\to\infty}\frac{1}{N-M}\sum_{n_k=M}^{N-1}\int \prod_{\vec{\epsilon}\in \{0,1\}^k}\mathcal{C}^{|\vec{\epsilon}|}T^{\vec{\epsilon}\cdot\vec{n}}f\;d\mu,
\end{equation}
 where $\vec{\epsilon}=(\epsilon_1,\ldots,\epsilon_k),$ $\vec{n}=(n_1,\ldots,n_k),$ $|\vec{\epsilon}|=\epsilon_1+\ldots+\epsilon_k,$ $\vec{\epsilon}\cdot\vec{n}=\epsilon_1 n_1+\ldots+\epsilon_k n_k$ and for $z\in \C,$ $k\in\N\cup\{0\}$ we let $ \mathcal{C}^k(z)=\left\{ \begin{array}{ll} z& \quad ,\;\text{if}\;k\;\text{is even} \\ \bar{z}&  \quad ,\;\text{if}\;k\;\text{is odd}\end{array} \right.$.

\medskip

We remark that $\nnorm{f\otimes\bar{f}}_{k,\mu\times\mu,T\times T}\leq \nnorm{f}^2_{k+1,\mu,T},$ for all $k\in \N,$ which follows from \eqref{E:norms} and the ergodic theorem, 
 $\nnorm{f}_{k,\mu,T}\leq \nnorm{f}_{k+1,\mu,T}$ for all $k\in\N$ (by using Lemma~3.9 from \cite{HK99})  and $\nnorm{f}_{k,\mu,T}=\nnorm{f}_{k,\mu,T^{-1}}$ for all $k\in\N,$ which follows from \eqref{E:norms}.

\medskip

 It is a deep fact, shown in \cite{HK99}, that for ergodic systems we have that $\nnorm{f}_{k+1}=0$ if and only if the function $f$ is orthogonal to the largest $k$-step "nil-factor" of the system. We will not use this fact here though.


\medskip

In order to recall a convergence result from \cite{CFH}, we also have to recall the notion of a nice family of polynomials.

\begin{definition*}[\cite{CFH}]
Let $\ell, m\in \N.$ Given $\ell$ families of polynomials in $\R[t]$
$$\mathcal{P}_1=(p_{1,1},\ldots,p_{1,m}),\ldots,\mathcal{P}_\ell=(p_{\ell,1},\ldots,p_{\ell,m}) $$ we define an {\em ordered family of $m$ polynomial $\ell$-tuples} as follows
$$ (\mathcal{P}_1,\ldots,\mathcal{P}_\ell)=\left((p_{1,1},\ldots,p_{\ell,1}),\ldots,(p_{1,m},\ldots,p_{\ell,m})\right).$$

In the special case where the polynomials $p_{i,j}$ belong to $\Z[t]$ we call the ordered family of polynomial $\ell$-tuples $(\mathcal{P}_1,\ldots,\mathcal{P}_\ell)$ {\em nice} if
\begin{enumerate}
\item $\deg(p_{1,1})\geq \deg(p_{1,j})$ for $1\leq j\leq m$;
\item $\deg(p_{1,1})>\deg(p_{i,j})$ for $2\leq i\leq \ell,$ $1\leq j\leq m$; and
\item $\deg(p_{1,1}-p_{1,j})>\deg(p_{i,1}-p_{i,j})$ for $2\leq i\leq \ell,$ $2\leq j\leq m.$
\end{enumerate}
A nice family of polynomials with maximum degree $1$ has only one non-zero term.
\end{definition*}

Using the theory of characteristic factors, Chu, Frantzikinakis and Host showed the following results: 

\begin{theorem}[\mbox{\cite[Theorem 1.2]{CFH}}]\label{T:F7}
For $\ell\in \N$ let $(X,\mathcal{X},\mu,T_1,\ldots,T_\ell)$ be a system, $p_1,\ldots,p_\ell$ $\in \Z[t]$ be non-constant polynomials with distinct degrees and maximum degree $d$ and functions $f_1,\ldots,f_\ell$ $\in L^\infty(\mu).$ Then there exists $k=k(\ell,d)\in \N$ such that if $\nnorm{f_i}_{k,\mu,T_i}=0$ for some $1\leq i\leq \ell,$ then the averages 
$$\frac{1}{N-M}\sum_{n=M}^{N-1}T^{p_1(n)}_1 f_1\cdot\ldots\cdot T^{p_\ell(n)}_{\ell} f_\ell $$
converge to $0$ in $L^2(\mu)$ as $N-M\to\infty.$
\end{theorem}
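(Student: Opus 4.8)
The plan is to establish the following quantitative form of the statement: there exist $k=k(\ell,d)\in\N$, $c=c(\ell,d)>0$ and $C=C(\ell,d)>0$ such that, for every index $i_0\in\{1,\dots,\ell\}$ and all functions with $\norm{f_j}_\infty\le1$,
$$
\limsup_{N-M\to\infty}\Big\|\frac{1}{N-M}\sum_{n=M}^{N-1}T_1^{p_1(n)}f_1\cdot\ldots\cdot T_\ell^{p_\ell(n)}f_\ell\Big\|_{L^2(\mu)}\le C\,\nnorm{f_{i_0}}_{k,\mu,T_{i_0}}^{\,c}.
$$
Granting this, the theorem is immediate: if $\nnorm{f_{i_0}}_{k,\mu,T_{i_0}}=0$ for some $i_0$, then the $\limsup$ of the $L^2$-norms vanishes, which is exactly the asserted $L^2$-convergence to $0$ as $N-M\to\infty$ (so the mean convergence result of \cite{W12} is not even needed here). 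After relabeling we may assume $\deg p_1>\dots>\deg p_\ell\ge1$, with $\deg p_1=d$, and we fix the index $i_0$ to be isolated.

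The engine is the van der Corput inequality together with Bergelson's PET induction. First I would use van der Corput to bound $\limsup_{N-M\to\infty}\|A_{N,M}\|_{L^2}^2$ (with $A_{N,M}$ the average) by an average over $h$ of the quantities $\limsup_{N-M\to\infty}\big|\frac1{N-M}\sum_{n}\int \prod_i(T_i^{p_i(n+h)}f_i)\cdot\overline{\prod_i(T_i^{p_i(n)}f_i)}\,d\mu\big|$, and then apply a suitable measure-preserving map inside the integral. This rewrites the inner average as one over $n$ of an integral of a product of $2\ell$ functions of the form $S\,g$, with $g\in\{f_j,\bar f_j\}$ and $S$ a word in $T_1,\dots,T_\ell$ whose exponents are polynomials in $n$ --- the ``slow'' polynomials $q_{i,h}(t)=p_i(t+h)-p_i(t)$ and the ``cross-term'' polynomials $-p_j(t)$. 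Thus the natural objects to induct on are \emph{ordered families of polynomial $\ell$-tuples} weighted by $L^\infty$ functions, in the sense recalled before Theorem~\ref{T:F7}; a single van der Corput step carries such a family to another one, in general on a Cartesian power of the system, with the weights replaced by tensor products $f\otimes\bar f$.

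Next I would run the PET induction on these families. To each family one attaches a ``type'' --- a finite, lexicographically ordered tuple of integers recording, degree by degree, how many essentially distinct leading blocks occur --- and proves the combinatorial lemma that, when the van der Corput shift is chosen to cancel a transformation carrying a polynomial of least positive degree, the type of the resulting family strictly decreases. The decisive point, and the place where the hypothesis of \emph{distinct degrees} is used, is that this operation preserves \emph{niceness} (conditions (i)--(iii) before Theorem~\ref{T:F7}): a preliminary reduction brings $p_{i_0}$ into leading position --- here the distinctness of the degrees of the $p_i$ and of their pairwise differences is exactly what is needed --- after which the family can be kept nice throughout, with $f_{i_0}$ as the tracked weight. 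The induction terminates at a family of type $0$, where $f_{i_0}$ sits against a constant iterate and the mean ergodic theorem / the definition of $\nnorm{\cdot}_1$ supplies a bound by a positive power of $\nnorm{f_{i_0}}_{1,\mu,T_{i_0}}$. Finally, reading the induction backwards, each undone van der Corput step is paid for by converting a bound involving $\nnorm{f_{i_0}\otimes\bar f_{i_0}}_{j,\mu\times\mu,T_{i_0}\times T_{i_0}}$ into one involving $\nnorm{f_{i_0}}_{j+1,\mu,T_{i_0}}$ via the inequality $\nnorm{f\otimes\bar f}_{j,\mu\times\mu,T\times T}\le\nnorm{f}_{j+1,\mu,T}^2$ recorded after \eqref{E:norms} (using also $\nnorm{\cdot}_{j,\mu,T}\le\nnorm{\cdot}_{j+1,\mu,T}$ and $\nnorm{\cdot}_{j,\mu,T}=\nnorm{\cdot}_{j,\mu,T^{-1}}$ to absorb discrepancies); after the $O_{\ell,d}(1)$ steps this returns the estimate to the original system with $k=k(\ell,d)$.

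The main obstacle is the PET core: designing the ``type'' so that a single well-chosen van der Corput operation makes it strictly drop while simultaneously keeping the family nice, and --- the point genuinely peculiar to the commuting, distinct-degree setting --- arranging the bookkeeping so that the \emph{arbitrary} function $f_{i_0}$, and not merely the one attached to the top-degree polynomial, is the one tracked to the bottom of the induction, which is where the distinct degrees of the differences $p_i-p_j$ are essential. The van der Corput and Cauchy--Schwarz manipulations and the reverse pass with the tensor inequality are then routine; in practice I would import this combinatorial core directly from \cite{CFH} (and its streamlined use in \cite{F}) rather than reprove it.
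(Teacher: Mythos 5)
This statement is not proved in the paper at all: Theorem~\ref{T:F7} is quoted verbatim as Theorem~1.2 of \cite{CFH} and is used as a black box (together with Theorem~\ref{T:F4} and Lemma~\ref{L:F6}) in the proof of Theorem~\ref{T:F1}. So there is no internal proof to compare against, and the honest answer is that your proposal reconstructs the argument of \cite{CFH} rather than anything in this paper. As a reconstruction it is faithful in outline: the quantitative reformulation $\limsup\norm{A_{N,M}}_{L^2}\leq C\,\nnorm{f_{i_0}}_{k}^{c}$, the van der Corput step producing families of polynomial $\ell$-tuples on product systems, the PET type that strictly decreases, and the reverse pass through $\nnorm{f\otimes\bar f}_{j,\mu\times\mu,T\times T}\leq\nnorm{f}^2_{j+1,\mu,T}$ are all genuinely the mechanism behind Proposition~\ref{P:F8} and Theorem~\ref{T:F7} in \cite{CFH}.

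The one substantive soft spot is your phrase ``a preliminary reduction brings $p_{i_0}$ into leading position.'' With niceness as defined before Theorem~\ref{T:F7}, condition (ii) forces the leading polynomial $p_{1,1}$ to have strictly maximal degree among all entries attached to the other transformations, so when $\deg p_{i_0}$ is not maximal you cannot make $(p_{i_0},0,\ldots,0)$ the leading tuple of a nice family by mere relabeling; this is exactly why Proposition~\ref{P:F8} only tracks the \emph{first} function, and why the deduction of control by an arbitrary $f_{i_0}$ in \cite{CFH} requires extra work (successive van der Corput/Cauchy--Schwarz steps eliminating the higher-degree terms before the nice-family machinery is invoked). Since you explicitly import the combinatorial core from \cite{CFH}, this is an imprecision rather than a fatal error, but if the sketch were to be made self-contained this is the step that would need a genuine argument rather than a relabeling. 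Your observation that the convergence result of \cite{W12} is not needed for the vanishing statement is correct.
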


We will use this theorem together with Theorem~\ref{T:F4} (see below) in order to prove Theorem~\ref{T:F1}. For the proof of Theorem~\ref{T:F2}, we will use again Theorem~\ref{T:F4} and the analogous (in Proposition~\ref{P:F3}) of the following result:

\begin{proposition}[\mbox{\cite[Proposition 5.1]{CFH}}]\label{P:F8}
For $\ell\in \N$ let $(X,\mathcal{X},\mu,T_1,\ldots,T_\ell)$ be a system, $(\mathcal{P}_1,\ldots,\mathcal{P}_\ell)$ be a nice family of $\ell$-tuples of polynomials in $\Z[t]$ with maximum degree $d$ and $f_1,\ldots,f_m\in L^\infty(\mu).$ Then there exists $k=k(d,\ell,m)\in\N$ such that if $\nnorm{f_1}_{k,\mu,T_1}=0,$ then the averages 
$$\frac{1}{N-M}\sum_{n=M}^{N-1}(\prod_{i=1}^\ell T^{p_{i,1}(n)}_i)f_1\cdot\ldots\cdot (\prod_{i=1}^\ell T^{p_{i,m}(n)}_i)f_m $$
converge to $0$ in $L^2(\mu)$ as $N-M\to\infty.$
\end{proposition}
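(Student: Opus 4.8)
The plan is to prove this by PET (polynomial exhaustion technique) induction, reducing the given average --- after finitely many applications of the van der Corput inequality --- to one in which the protected function $f_1$ is iterated only by a linear power of $T_1$, where seminorm control is immediate. We may assume $\norm{f_i}_\infty\le1$. Write $A_{M,N}$ for the average in the statement; since $\norm{A_{M,N}}_{L^2(\mu)}$ is bounded, it suffices to show $\limsup_{N-M\to\infty}\norm{A_{M,N}}_{L^2(\mu)}=0$, which is precisely convergence to $0$ in $L^2(\mu)$ (no separate argument for existence of the limit is needed).

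To a nice ordered family $(\CP_1,\ldots,\CP_\ell)$ of $m$ polynomial $\ell$-tuples of maximum degree $d$ I attach a complexity, its PET type: the vector recording, for each degree $e\ge1$, how many of the tuples are effectively of degree $e$ --- two $\ell$-tuples being identified when they differ by a constant tuple --- together with the niceness data (1)--(3). The induction is on this complexity, which ranges over a finite set determined by $d,\ell,m$. The base case is $d=1$: by the last line of the definition of a nice family the average reduces to $\frac{1}{N-M}\sum_{n=M}^{N-1}T_1^{p_{1,1}(n)}f_1$ with $p_{1,1}$ linear and non-constant, and the mean ergodic theorem for $T_1$ (or for $T_1^c$, $c$ the leading coefficient of $p_{1,1}$) gives $\limsup_{N-M\to\infty}\norm{A_{M,N}}_{L^2(\mu)}\le\nnorm{f_1}_{1,\mu,T_1}\le\nnorm{f_1}_{2,\mu,T_1}=0$, so $k(1,\ell,m)=2$ works.

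For the inductive step I apply van der Corput's inequality to $\norm{A_{M,N}}_{L^2(\mu)}^2$. After composing each term with $\prod_{i=1}^\ell T_i^{-p_{i,1}(n)}$ inside the integral and dropping the resulting degenerate ($j=1$) factor, this bounds $\limsup_{N-M\to\infty}\norm{A_{M,N}}_{L^2(\mu)}^2$ by an average over $h$ of the (limsup in $N-M$ of the) moduli of new multiple ergodic averages built from the $2m$ functions $f_1,\bar f_1,f_2,\bar f_2,\ldots,f_m,\bar f_m$ --- of which $\bar f_1$ now carries the trivial iterate, while $f_1$ is attached to the difference polynomials $p_{i,1}(\cdot+h)-p_{i,1}(\cdot)$. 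The decisive point, and the entire reason for imposing conditions (1)--(3), is that for every $h$ outside a set of density $0$ this new ordered family of $\ell$-tuples is again nice, with $f_1$ in the protected slot, and of strictly smaller PET type than the original: conditions (1)--(2) force $p_{1,1}$ to dominate the degrees, hence its difference $p_{1,1}(\cdot+h)-p_{1,1}(\cdot)$ still dominates the other difference polynomials, while (3) is exactly the condition preserved under differencing the pairs $p_{i,1}-p_{i,j}$. By the inductive hypothesis applied to this new family --- either directly on the given system, or, implementing the van der Corput step through the product system $(X\times X,\ldots,T_i\times T_i)$ with $f_1\otimes\bar f_1$ protected and then invoking $\nnorm{f_1\otimes\bar f_1}_{k',\mu\times\mu,T_1\times T_1}\le\nnorm{f_1}_{k'+1,\mu,T_1}^2$ from Section~2 --- there is $k'$, bounded in terms of $d,\ell,m$, such that vanishing of the relevant $\nnorm{\cdot}_{k'}$-seminorm forces these new averages to $0$ in $L^2$ for all such $h$; the van der Corput estimate then yields $\limsup_{N-M\to\infty}\norm{A_{M,N}}_{L^2(\mu)}=0$. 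Taking $k:=k'+1$ (or $k:=k'$ in the direct version) turns the hypothesis $\nnorm{f_1}_{k,\mu,T_1}=0$ into what is needed, and since the PET type strictly decreases and $k$ grows by a bounded amount at each of the boundedly many steps, this produces the asserted $k=k(d,\ell,m)$.

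The main obstacle is the combinatorial bookkeeping in the van der Corput step: one must verify that the three niceness conditions are stable under the operation ``shift the family by $h$, subtract the $(i,1)$-polynomials from every tuple, delete the degenerate tuple'', that this operation strictly lowers the PET type, and that the exceptional set of $h$ for which niceness fails has density $0$; and all of this must be carried out for averages along arbitrary intervals $[M,N)$ with $N-M\to\infty$, so the intervals themselves must be tracked through the van der Corput inequality rather than working on $[1,N]$. Each step is routine in isolation; the delicate part --- and the real content of \cite{CFH} --- is arranging the induction so that the protected function $f_1$ is never eliminated, which is precisely what conditions (1)--(3) guarantee.
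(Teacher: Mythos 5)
This proposition is not proved in the paper at all: it is quoted verbatim from \cite{CFH} (Proposition~5.1 there) and used as a black box, so there is no in-paper argument to compare against. Your sketch does follow the same general strategy as the source (PET induction driven by van der Corput, with the Host--Kra-type seminorms controlling the base case), but as written it has a genuine gap at the decisive step.

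The problem is your choice of which tuple to subtract in the van der Corput step. You compose with $\prod_{i=1}^\ell T_i^{-p_{i,1}(n)}$, i.e.\ you subtract the tuple attached to the protected function $f_1$. After this operation the polynomial attached to $f_1$ is $p_{1,1}(\cdot+h)-p_{1,1}(\cdot)$, of degree $d-1$, while the polynomial attached to $\bar f_j$ ($j\geq 2$) is $p_{1,j}-p_{1,1}$, which has degree exactly $d$ whenever $\deg(p_{1,j})<\deg(p_{1,1})$ (and the one attached to $f_j$ is $p_{1,j}(\cdot+h)-p_{1,1}(\cdot)$, also typically of degree $d$). So conditions (i)--(ii) of niceness fail for the new family with $f_1$ in the protected slot: the difference $p_{1,1}(\cdot+h)-p_{1,1}(\cdot)$ does \emph{not} dominate the other difference polynomials, contrary to what you assert. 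Your induction therefore does not close. The correct PET bookkeeping (and the real content of the proof in \cite{CFH}) subtracts a tuple chosen to minimize the weight/type --- essentially never the maximal-degree tuple attached to $f_1$ when lower-degree tuples are present --- and conditions (i)--(iii) are engineered precisely so that \emph{that} operation preserves niceness with $f_1$ still dominant; verifying this requires a case analysis you have replaced by an incorrect assertion. A secondary, minor error: in the base case your inequality chain is reversed, since $\mathcal{I}(T_1)\subseteq\mathcal{I}(T_1^c)$ gives $\norm{\E(f_1|\mathcal{I}(T_1^c))}_{L^2(\mu)}\geq\nnorm{f_1}_{1,\mu,T_1}$, not $\leq$; the conclusion that $\nnorm{f_1}_{2,\mu,T_1}=0$ kills the limit is still true, but it needs the standard fact that $\mathcal{I}(T_1^c)$-measurable functions are measurable with respect to the Kronecker-type factor, not the inequality you wrote.
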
 

\begin{remark*}
In these two results (Theorem~\ref{T:F7} and Proposition~\ref{P:F8}) $k$ can be chosen arbitrarily large.
\end{remark*}

\section{Regularity}

 Let $k\in \N$. In this section, we will prove that a sequence of the form \eqref{E:integer} is $k$-regular. In order to do so, we will make use of a trick due to Wierdl (Theorem~\ref{T:M} below),
a mean convergence result for multiple averages due to Walsh  (\cite{W12}) and the following proposition which we borrow from \cite{F}:
 
\begin{proposition}[\cite{F}]\label{P:nilkey}
For $k\in \N$ let  $(\psi(n))$ be a $(k-1)$-step nilsequence$.$
Then for every $\varepsilon>0$ there exists a
system $(X,\mathcal{X},\nu,S)$ and functions $f_1,\ldots, f_{k}\in L^\infty(\nu)$,
 such  that the
sequence $(b(n))$, defined by
\begin{equation}\label{E:bn}
b(n):=\int  S^{\ell_1n}f_1 \cdot \ldots \cdot  S^{\ell_{k} n}f_{k}\ d\nu, \quad n\in \N,
\end{equation}
where $\ell_i:=k!/i$ for $i=1,\ldots, k$, satisfies
$$
\norm{\psi-b}_\infty\leq \varepsilon.
$$
\end{proposition}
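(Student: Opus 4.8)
The plan is to reduce the statement to the well-known structure theory of nilsequences, specifically to the fact that a $(k-1)$-step nilsequence can be uniformly approximated by a linear combination of products of $(k-1)$ terms of the form $F(g^n\Gamma)$ where the nilmanifold $G/\Gamma$ is connected and $g$ is ``well-placed''. The key observation driving the exponents $\ell_i=k!/i$ is the following classical construction: if $\alpha_1,\dots,\alpha_{k}$ are real numbers and one considers the orbit $(n\alpha_1,\dots,n\alpha_k)$ on a torus, then the associated ``Weyl system'' of degree $k-1$ can be realised inside a product of $k$ circle rotations run at commensurable speeds; more generally, any $(k-1)$-step nilsequence arises, up to uniform $\varepsilon$-approximation, as an average of the shape $\int S^{b_1 n}f_1\cdots S^{b_k n}f_k\,d\nu$ on a suitable (possibly disconnected, but we can pass to a finite extension) system $(X,\mathcal X,\nu,S)$ for \emph{some} integer speeds $b_1,\dots,b_k$. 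The content of the proposition is then that one may take the \emph{specific} speeds $\ell_i=k!/i$, and this is the step I would isolate.

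First I would recall (citing \cite{BHK05} or \cite{HK05}) that the uniform closure of the space of $(k-1)$-step nilsequences equals the uniform closure of the space of \emph{basic} $(k-1)$-step nilsequences, and that every basic one, $F(g^n\Gamma)$, can after a base change be assumed to live on a nilmanifold on which $g$ generates a ``Kronecker-type'' sub-action; using the standard trick of lifting to $(\Z/q\Z)$-extensions, one reduces to the case where the relevant frequencies are rationals with a common denominator dividing $k!$. Second, I would show that on such a system the nilsequence can be written as a $(k-1)$-fold correlation, and then use the algebraic identity: the set of integer vectors $\{(m\ell_1,\dots,m\ell_k):m\in\Z\}$, with $\ell_i=k!/i$, together with the diagonal action, spans (over $\Z$, after multiplying through by $k!$) the same sublattice of speeds needed to realise any product of $\le k$ linear orbits of the above commensurable type — the point being that $\gcd(\ell_1,\dots,\ell_k)=1$ while each $i\mid k!$, so one can solve the relevant Bézout-type relations to recover arbitrary frequency data. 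Concretely, given target speeds $c_1,\dots,c_j$ ($j\le k$) that are integers dividing $k!$, one builds $S$ on a product system so that $S^{\ell_i n}$ realises the $i$-th desired rotation and the remaining $f_i$'s are taken to be $1$; matching up which $c$'s appear with which $\ell_i$'s is the combinatorial heart of the argument.

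Third, having realised each basic nilsequence $\psi$ up to $\varepsilon/2$ by such a correlation $b$, I would absorb the approximation error: a finite uniform limit of nilsequences is handled by linearity of the class of correlation sequences \eqref{E:bn} (closed under finite sums by taking product systems, exactly as $\mathcal B_k$ is shown to be a linear subspace in \cite{F}) together with a telescoping of the $\varepsilon$-approximations. The output $b(n)=\int S^{\ell_1 n}f_1\cdots S^{\ell_k n}f_k\,d\nu$ then satisfies $\norm{\psi-b}_\infty\le\varepsilon$ as required, and $\norm{b}_\infty\le\prod\norm{f_i}_\infty$ can be normalised.

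The main obstacle I anticipate is the bookkeeping in the second step: showing that the \emph{rigid} choice $\ell_i=k!/i$ is flexible enough to encode an \emph{arbitrary} $(k-1)$-step nilsequence. Each basic nilsequence of degree $k-1$ carries, via its ``type'' decomposition, frequency data of complexity up to $k-1$ in each coordinate, and one must check that running $k$ commuting copies of a single transformation $S$ at the speeds $k!, k!/2, \dots, k!/k$ produces enough independent phases — this is where the divisibility $i\mid k!$ and the coprimality $\gcd(k!/1,\dots,k!/k)=1$ both get used, and where a careless count of available degrees of freedom could fail. Since this proposition is quoted verbatim from \cite{F}, I would in practice defer the detailed verification to that reference and only sketch the reduction above; but a self-contained argument would spend most of its length precisely on this frequency-matching lemma.
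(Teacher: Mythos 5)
There is a genuine gap here, and also a point worth flagging about the target: the paper does not prove Proposition~\ref{P:nilkey} at all --- it is quoted verbatim from \cite{F} and used as a black box, so the statement you are really being asked to supply is the proof from \cite{F} (where it rests on the structure theory of nilsystems: one realizes $b(n)$ as the value of a continuous function at the point $(g^{\ell_1 n},\ldots,g^{\ell_k n})$ in $G^k$ modulo the diagonal $G^{\Delta}$ and $\Gamma^k$, and then chooses the nilsystem and the $f_i$ so as to recover $F(g^n\Gamma)$). Your write-up correctly isolates the crux --- that the \emph{rigid} speeds $\ell_i=k!/i$ must be shown to encode an arbitrary $(k-1)$-step nilsequence --- but it then leaves exactly that step unproved, and you say yourself that you would defer it to \cite{F}. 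A plan that defers its only nontrivial step is not a proof.

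Moreover, the two concrete ingredients you offer in support of the frequency-matching step are wrong. First, $\gcd(k!/1,\ldots,k!/k)=1$ fails for every $k\ge 4$: for $k=4$ the speeds are $24,12,8,6$, all even, so the gcd is $2$; the same parity obstruction shows that the lattice $\{\sum_i c_i\ell_i:\ c_i\in\Z,\ \sum_i c_i=0\}$ (which is what actually governs the achievable output frequencies when you compute $\int \prod_i S^{\ell_i n}f_i\,d\nu$ with characters $f_i$) is the proper sublattice $2\Z$ already for $k=4$. So the Bézout relations you invoke are not solvable as stated, and even in the abelian $1$-step case one must \emph{change the system} (e.g.\ run the rotation at speed $\alpha/d$ for a suitable divisor $d$ of $k!$) rather than keep the given one --- a maneuver your sketch never performs. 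Second, the proposed reduction ``to the case where the relevant frequencies are rationals with a common denominator dividing $k!$'' is not available: already $\psi(n)=\e(n\alpha)$ with $\alpha$ irrational is a $1$-step nilsequence whose frequency data cannot be rationalized by passing to finite extensions, and for $k\ge 3$ the frequency data of a basic nilsequence is genuinely non-abelian (commutator terms on the Heisenberg nilmanifold and beyond), so no torus-level Bézout computation can substitute for the nilmanifold argument. In short: the skeleton is reasonable, but the heart of the argument is missing and the arithmetic fact meant to drive it is false.
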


In order to obtain the $k$-regularity for any $k\in\N$ of a sequence  $(a(n))$ as in  \eqref{E:integer}, we have to check that  the limit
 $\displaystyle \lim_{N-M\to \infty} \frac{1}{N-M}\sum_{n=M}^{N-1}a(n)\psi(n) $ exists for every $(k-1)$-step nilsequence $(\psi(n))$. From  Proposition~\ref{P:nilkey}, it suffices to check that the limit
  \begin{equation}\label{E:abnn}
\lim_{N-M\to \infty} \frac{1}{N-M}\sum_{n=M}^{N-1} a(n)b(n)
\end{equation}
 exists for every sequence
  $(b(n))$ of the form $\int  S^{\ell_1n}g_1\cdot \ldots \cdot S^{\ell_{k} n}g_{k}\ d\nu$ , where $\ell_1,\ldots, \ell_k\in \N$,  $(Y,\mathcal{Y},\nu, S)$ is a system, and $g_1,\ldots, g_{k}\in L^\infty(\nu)$. 

\medskip

To verify that the limit in \eqref{E:abnn} exists, we will use a trick of M.~Wierdl (the result of Wierdl is for $\R^2$ measure preserving flows, see definition below, and Cesaro averages. We will translate his proof in our setting).

\medskip

 We first recall the notion of the  upper Banach density.
 
\begin{definition*} 
 Let $S$ be a subset of natural numbers. We define the {\em upper Banach density} of $S,$ $d^{\ast}(S),$ to be the number $$d^{\ast}(S)=\limsup_{N-M\to\infty}\frac{|S\cap [M,\ldots, N)|}{N-M}.$$
\end{definition*} 

Also, we recall the notion of a measure preserving flow.

\begin{definition*}
Let $r\in \mathbb{N}$ and $(X,\mathcal{X}, \mu)$ be a probability space. 
We call a family $(T_t)_{t\in \mathbb{R}^r}$ of measure preserving 
transformations $T_t\colon X\to X,$ a {\em measure preserving flow}, if 
it satisfies $$T_{s+t}=T_s\circ T_t$$ for all $s,t\in \R^r.$
\end{definition*}

The following theorem contains the central idea for passing from results for flows to results for $\Z$-actions. Essential for this, is the $(\ell m)$-dimensional variant of the special flow above a system under the constant ceiling function $1$ (see in the proof below) first defined (for $\ell=m=1$) in \cite{EL}.

\begin{theorem}[\cite{Mate}]\label{T:M}
Let $\ell, m \in \N.$  Suppose that the sequences of real numbers $(a_{i,j}(n))$ for $1\leq i\leq \ell,$ $1\leq j\leq m,$ satisfy the following two properties:

\begin{enumerate}
\item For any $\R^{\ell m}$ measure preserving flow $\displaystyle \prod_{i=1}^{\ell}T_{i,a_{i,1}}\cdot \ldots\cdot \prod_{i=1}^\ell T_{i,a_{i,m}},$ where the transformations $T_{i,a_{i,j}}$ are defined in the probability space $(X,\mathcal{X},\mu)$ and functions $f_1,\ldots, f_m$ $\in L^\infty(\mu),$ the averages $$\frac{1}{|I_N|}\sum_{n\in I_N}  (\prod_{i=1}^\ell T_{i,a_{i,1}(n)})f_1\cdot \ldots\cdot (\prod_{i=1}^\ell T_{i,a_{i,m}(n)})f_m$$ converge in $L^2(\mu)$ as $N\to\infty$ (where $|I_N|\to\infty$ as $N\to\infty$);

\item  $\displaystyle \lim_{\delta\to 0^+} d^{\ast}\left(\Big\{n\colon \{a_{i,j}(n)\}\in [1-\delta,1)\Big\}\right)=0,$ for all $1\leq i\leq \ell,$  $1\leq j\leq m,$ where $\{\cdot\}$ denotes the fractional part$.$
\end{enumerate}

Then the averages $$\frac{1}{|I_N|}\sum_{n\in I_N} (\prod_{i=1}^\ell T_i^{[a_{i,1}(n)]})f_1\cdot\ldots\cdot(\prod_{i=1}^\ell T_i^{[a_{i,m}(n)]})f_m$$ also converge in $L^2(\mu),$ as $N\to\infty,$ for every system $(X,\mathcal{X},\mu,T_1,\ldots, T_\ell)$ and functions 
$f_1,\ldots, f_m\in L^\infty(\mu).$
\end{theorem}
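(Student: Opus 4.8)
The idea is to realize the $\Z$-action correlation averages as a ``slice'' of a $\Z$-action on a larger space built as a suspension (special flow) over the original system with constant ceiling $1$, and then to recognize that on this larger space the relevant orbit is governed by an $\R^{\ell m}$-flow, so that hypothesis (i) applies. First I would fix a system $(X,\mathcal{X},\mu,T_1,\ldots,T_\ell)$ and, for each pair $(i,j)$, think of the real number sequence $a_{i,j}(n)$ as the sum of its integer part $[a_{i,j}(n)]$ and its fractional part $\{a_{i,j}(n)\}$. On the product $X\times[0,1)^{\ell m}$ (with $\mu$ times Lebesgue measure) one defines the $\ell m$-dimensional special flow: moving in the coordinate direction labelled $(i,j)$ by a real amount advances the $[0,1)$-coordinate, and whenever that coordinate wraps around $1$ one applies $T_i$ to the $X$-fibre. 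Concretely $T_{i,j,t}(x,\by)=\bigl(T_i^{[y_{i,j}+t]}x,\, \by + t e_{i,j} \bmod 1\bigr)$, and these commute over distinct $(i,j)$ and form an $\R^{\ell m}$ measure preserving flow; this is the $(\ell m)$-dimensional analogue of the Lesigne construction from \cite{EL} referenced before the statement.

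The key computation is then to check that the flow averages on $X\times[0,1)^{\ell m}$, evaluated at functions that on the fibre are (roughly) indicator-type functions of a small neighbourhood of $0$ in $[0,1)$, reproduce the $\Z$-action averages on $X$ up to the ``bad'' set where some fractional part $\{a_{i,j}(n)\}$ is close to $1$. More precisely: given $f_1,\ldots,f_m\in L^\infty(\mu)$ with $\|f_j\|_\infty\le 1$, for $\delta>0$ set $\tilde f_j=f_j\otimes \mathbf{1}_{[0,\delta)^{\ell}}$ (the indicator on the fibre coordinates $(1,j),\ldots,(\ell,j)$), push these through the flow $\prod_i T_{i,a_{i,1}(n)}\cdots\prod_i T_{i,a_{i,m}(n)}$, and integrate the product over the fibre; up to an error controlled by $\sum_{i,j} d^*(\{n:\{a_{i,j}(n)\}\in[1-\delta,1)\})$ one recovers $\delta^{\ell m}$ times the $\Z$-action average of $(\prod_i T_i^{[a_{i,1}(n)]})f_1\cdots(\prod_i T_i^{[a_{i,m}(n)]})f_m$. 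Indeed, when every $\{a_{i,j}(n)\}<1-\delta$, the point $(x,\mathbf{0})$ flows to $\bigl((\prod_i T_i^{[a_{i,j}(n)]})\text{-stuff applied to }x,\ (\{a_{i,j}(n)\})_{i,j}\bigr)$, and the fibre indicator constrains where this lands. By hypothesis (i) the flow averages converge in $L^2$ for every fixed $\delta$; hypothesis (ii) makes the $\delta\to 0^+$ error uniformly small. A Cauchy-in-$L^2(\mu)$ argument (comparing averages over $I_N$ and $I_{N'}$, using convergence for fixed $\delta$ and then letting $\delta\to0$) then yields convergence of the $\Z$-action averages. One should treat general $f_j\in L^\infty$ by approximating the fibre-indicator step, or equivalently by a density argument, since genuine indicators are not continuous; working in $L^2$ of the product space this is harmless.

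I would organize the write-up as: (1) construct the special flow and verify it is an $\R^{\ell m}$ measure preserving flow with commuting one-parameter subgroups; (2) state the comparison identity relating flow averages with fibre-indicator functions to the $\Z$-action averages, with an explicit error term; (3) invoke hypothesis (i) to get $L^2$-convergence of the flow averages for each fixed $\delta$; (4) invoke hypothesis (ii) to bound the error and run the $\delta\to0$ Cauchy argument to conclude. The main obstacle I anticipate is step (2): making the bookkeeping of integer and fractional parts across the $\ell m$ different directions precise, and in particular checking that the ``bad'' set really is exactly $\bigcup_{i,j}\{n:\{a_{i,j}(n)\}\in[1-\delta,1)\}$ (or contained in a set of comparable density) — one must be careful that wrapping can be triggered either at the start of a flow step or accumulated across the composition, and that the indicator $\mathbf{1}_{[0,\delta)}$ on the fibre is the right ``slicing'' device so that the contribution of the good $n$'s is exactly the desired average times a constant. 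The rest is routine once this identity is pinned down; everything else is a formal consequence of the two hypotheses plus a standard Cauchy-sequence-in-a-complete-space argument.
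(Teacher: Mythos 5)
Your construction of the $(\ell m)$-dimensional special flow and your overall plan (localize on the fibre, compare with the $\Z$-action average up to the set of $n$ where some fractional part is problematic, use hypothesis (i) for fixed $\delta$ and hypothesis (ii) to control the error as $\delta\to0^+$, then run a Cauchy argument) are sound, and this mechanism is essentially the one the paper uses for the companion results, Theorems~\ref{T:M2} and~\ref{T:F4}. However, your step (2) fails with the slicing device placed where you put it. If you set $\tilde f_j=f_j\otimes\mathbf{1}_{[0,\delta)^{\ell}}$ and then compose with the flow, the indicator is transported along with $f_j$: one gets $(\prod_{i=1}^\ell T_{i,a_{i,j}(n)})\tilde f_j(x,b)=f_j(\prod_{i=1}^\ell T_i^{[a_{i,j}(n)+b_{i,j}]}x)\cdot\prod_{i=1}^\ell\mathbf{1}_{[0,\delta)}(\{a_{i,j}(n)+b_{i,j}\})$, so integrating over the full fibre selects each $b_{i,j}$ from an interval of length $\delta$ sitting near $1-\{a_{i,j}(n)\}$, not near $0$. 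The one-dimensional fibre integral works out to $(\delta-\{a\})^{+}f(T^{[a]}x)+\min(\{a\},\delta)\,f(T^{[a]+1}x)$: for $\{a_{i,j}(n)\}\geq\delta$ you recover $\delta^{\ell m}$ times the average with the shifted iterates $[a_{i,j}(n)]+1$ (harmless, since the shift can be absorbed into the $f_j$), but the exceptional set is $\bigcup_{i,j}\{n\colon\{a_{i,j}(n)\}\in[0,\delta)\}$ --- fractional parts near $0$, not near $1$ --- and hypothesis (ii) says nothing about its density. In the base case of integer-valued $a_{i,j}(n)$, which the theorem must cover, every $n$ is exceptional and the argument collapses.

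The fix is local: keep the moving functions as the plain extensions $\hat f_j=f_j\otimes 1$ and insert a single \emph{static} cutoff $\mathbf{1}_{[0,\delta)^{\ell m}}(b)$ in front of the whole product (equivalently, integrate only over $b\in[0,\delta)^{\ell m}$). Then for $b_{i,j}\leq\delta$ one has $[a_{i,j}(n)+b_{i,j}]=[a_{i,j}(n)]$ unless $\{a_{i,j}(n)\}\geq 1-\delta$, which is exactly the set hypothesis (ii) controls; this is precisely the device used in the paper's proofs of Theorems~\ref{T:M2} and~\ref{T:F4}, and with it your $\delta\to 0^+$ Cauchy argument goes through and gives a direct proof. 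Be aware that the paper's own proof of Theorem~\ref{T:M} takes a different, though closely related, route: it argues by contradiction, using Fubini and the $L^2(\nu)$-convergence from (i) to find a fibre point $b$ arbitrarily close to $0$ along which the averages are Cauchy, and then shows via (ii) that $|A_N(x,b)-A_N(x,0)|$ is uniformly small for $b$ small and $N$ large. Your (corrected) version is arguably cleaner in that it avoids the contradiction, but both rest on the same comparison.
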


\begin{proof}[Proof (We use the notation and arguments of Wierdl from \cite{Mate})]
For the given transformations on $X,$ we define the $\R^{\ell m}$ action $\displaystyle \prod_{i=1}^{\ell}T_{i,a_{i,1}}\cdot \ldots\cdot \prod_{i=1}^\ell T_{i,a_{i,m}}$  on the probability space $Y=X\times [0,1)^{\ell m},$ with the measure $\nu=\mu\times \lambda^{\ell m}$ ($\lambda$ is the Lebesgue measure on $[0,1)$), by
$$\prod_{j=1}^m \prod_{i=1}^\ell T_{i,a_{i,j}}(x,b_{1,1},\ldots,b_{\ell,1},b_{1,2},\ldots,b_{\ell,2},\ldots,b_{1,m},\ldots,b_{\ell,m})=$$
$$\left(\prod_{j=1}^m \prod_{i=1}^\ell T_i^{[a_{i,j}+b_{i,j}]}x,\{a_{1,1}+b_{1,1}\},\ldots,\{a_{\ell,1}+b_{\ell,1}\},\ldots,\{a_{1,m}+b_{1,m}\},\ldots,\{a_{\ell,m}+b_{\ell,m}\}\right). $$
Since the transformations $T_1,\ldots, T_\ell$ are measure preserving and commute, and since we have $[x+\{y\}]+[y]=[x+y],$ it is easy to check that the above action define a measure preserving flow on the product probability space $Y$.

 Note that this is nothing else than the ($\ell m$)-dimensional variant of the special flow above a system under the constant ceiling function $1.$

For a bounded function $f$ on $X,$ we define its version $\hat{f}$ on $Y$ by
$$\hat{f}(x,b_{1,1},\ldots,b_{\ell,1},b_{1,2},\ldots,b_{\ell,2},\ldots,b_{1,m},\ldots,b_{\ell,m})=f(x).$$
Note that if  $a_{i,j},$ for $1\leq i\leq \ell,$ $1\leq j\leq m,$ are real numbers, then
$$ \prod_{j=1}^m (\prod_{i=1}^\ell T_{i,a_{i,j}})\hat{f}_j(x,0,\ldots,0)=\prod_{j=1}^m (\prod_{i=1}^\ell T_i^{[a_{i,j}]})f_j(x).$$
We want to show that the averages $$\frac{1}{|I_N|}\sum_{n\in I_N}\prod_{j=1}^m (\prod_{i=1}^\ell T_{i,a_{i,j}(n)})\hat{f}_j(x,0,\ldots,0)$$ converge in $L^2(X,0,\ldots,0)$ as $N\to\infty.$ Denote 
$$A_N(x,b_{1,1},\ldots,b_{\ell,m})=\frac{1}{|I_N|}\sum_{n\in I_N}\prod_{j=1}^m (\prod_{i=1}^\ell T_{i,a_{i,j}(n)})\hat{f}_j(x,b_{1,1},\ldots,b_{\ell,m})$$ and assume to the contrary that $(A_N(x,0,\ldots,0))$ is not Cauchy in $L^2(X,0,\ldots,0).$ This means that we can find a sequence $(N_k)$ going to infinity, with 
\begin{equation}\label{E:c}
\int_{(X,0,\ldots,0)}\Big|A_{N_{k+1}}(x,0,\ldots,0)-A_{N_k}(x,0,\ldots,0)\Big|^2\,d\mu >c,\;\;k=1,2,\ldots,
\end{equation}
for some positive number $c.$ By the hypothesis, $(A_N(x,b_{1,1},\ldots,b_{\ell,m}))$ is Cauchy in $L^2(Y).$ By Fubini's theorem, for any given positive $\varepsilon,$ we can find $(b_{1,1},\ldots,b_{\ell,m})$ arbitrarily close to $(0,\ldots,0)$ and $k\in \N$ so that 
$$
\int_{(X,b_{1,1},\ldots,b_{\ell,m})}\Big|A_{N_{k+1}}(x,b_{1,1},\ldots,b_{\ell,m})-A_{N_k}(x,b_{1,1},\ldots,b_{\ell,m})\Big|^2\, d\mu<\varepsilon.
$$
We will show that for any given $\varepsilon>0,$ there exists $\delta>0$ so that if $0\leq b_{i,j}\leq\delta,$ for $1\leq i\leq \ell,$ $1\leq j\leq m,$ then, for every $x\in X$ and large enough  $N,$ we have
\begin{equation}\label{E:cc} \Big|A_{N}(x,b_{1,1},\ldots,b_{\ell,m})-A_{N}(x,0,\ldots,0)\Big|<\varepsilon,
\end{equation}
 and so, we will obtain the required contradiction with the Relation \eqref{E:c}.

Let $0<\delta<1$ (we will choose it later), and assume that $0\leq b_{i,j}\leq\delta,$ for all $1\leq i\leq \ell,$ $1\leq j\leq m.$ In \eqref{E:cc} we have to compare terms of the form 
$$\prod_{j=1}^m \hat{f}_j\left(\prod_{i=1}^\ell T_{i,a_{i,j}(n)}(x,b_{1,1},\ldots,b_{\ell,m})\right) \;\;\;\text{with}\;\;\;\prod_{j=1}^m \hat{f}_j\left(\prod_{i=1}^\ell T_{i,a_{i,j}(n)}(x,0,\ldots,0)\right).$$
So, by the definition of the flow, we need to compare terms of the form 
$$\hat{f}_1\left(\prod_{i=1}^\ell T_i^{[a_{i,1}(n)+b_{i,1}]}x,\{a_{1,1}(n)+b_{1,1}\},\ldots, \{a_{\ell,1}(n)+b_{\ell,1}\},b_{1,2},\ldots,b_{\ell,m} \right)\cdot \ldots$$ $$\ldots\cdot \hat{f}_m\left(\prod_{i=1}^\ell T_i^{[a_{i,m}(n)+b_{i,m}]}x,b_{1,1},\ldots,b_{\ell,m-1},\{a_{1,m}(n)+b_{1,m}\},\ldots,\{a_{\ell,m}(n)+b_{\ell,m}\} \right) $$
with
$$\hat{f}_1\left(\prod_{i=1}^\ell T_i^{[a_{i,1}(n)]}x,\{a_{1,1}(n)\},\ldots, \{a_{\ell,1}(n)\},0,\ldots,0 \right)\cdot \ldots$$ $$\ldots\cdot \hat{f}_m\left(\prod_{i=1}^\ell T_i^{[a_{i,m}(n)]}x,0,\ldots,0,\{a_{1,m}(n)\},\ldots,\{a_{\ell,m}(n)\} \right),$$ or equivalently, by the definition of $\hat{f}_j$, we need to compare $$\prod_{j=1}^m f_j(\prod_{i=1}^\ell T_i^{[a_{i,j}(n)+b_{i,j}]}x)\;\;\;\text{with}\;\;\;\prod_{j=1}^m f_j(\prod_{i=1}^\ell T_i^{[a_{i,j}(n)]}x).$$

Since all $b_{i,j}$ are less or equal than $\delta,$ if the fractional part of all $a_{i,j}(n)$ is less than $1-\delta,$ we have that $T_i^{[a_{i,j}(n)+b_{i,j}]}=T_i^{[a_{i,j}(n)]}$ for all $1\leq i\leq \ell,$ $1\leq j\leq m.$

It remains to deal with those $n$'s for which the fractional part of some $a_{i,j}(n)$ is greater or equal than $1-\delta.$ By the Condition (ii), we have that the upper Banach density of these n's is as small as we want by taking $\delta$ small. All $f_j$ are bounded, and so, the combined density of these terms in the averages $A_N(x,b_{1,1},\ldots,b_{\ell,m})$ and $A_N(x,0,\ldots,0)$ will be as small as we want independently of $x.$ Hence,  if $\delta$ is chosen sufficiently small, we get \eqref{E:cc}.
\end{proof} 

Now, by using the previous result, we will prove Theorem~\ref{T:3*}.
   
\begin{proof}[Proof of Theorem~\ref{T:3*}]
 It  suffices to show that for $a_{i,j}=p_{i,j},$ real valued polynomials, we have the conditions of Theorem~\ref{T:M}.
 
 Using Walsh's convergence result for commuting measure preserving transformations from \cite{W12} we have the Condition (i). Indeed, if, for example, $p(t)=a_r t^r+\ldots+a_1 t+a_0\in \R[t],$ we write $T_{p(n)}=(T_{a_r})^{n^r}\cdot \ldots\cdot (T_{a_1})^{n}\cdot T_{a_0}$ and we use Walsh's result for the commuting measure 
preserving transformations $S_1=T_{a_1},\ldots, S_r=T_{a_r}.$
 
  Real valued polynomials also satisfy the Condition (ii) of the previous theorem. 
  
  \noindent Indeed, let $p(t)=a_r t^r+\ldots+a_1 t+a_0\in \R[t].$

If $a_i\notin \Q$ for some $1\leq i\leq r,$ then we have the condition from Weyl's result, since $(p(n))$ is uniformly distributed (mod 1).

If $a_i\in \mathbb{Q}$ for all $1\leq i\leq r,$ then the sequence $(p(n))$ is periodic (mod 1) and 
Condition~(ii) is obvious. 
\end{proof}   
 
In order to show the $k$-regularity, it is sufficient to  show that the limit \eqref{E:abnn} exists for every sequence
  $(b(n))$ of the form $\int  S^{\ell_1n}g_1\cdot \ldots \cdot S^{\ell_r n}g_r\ d\nu$,
   where $r\in \N$ is arbitrary, $\ell_1,\ldots, \ell_r\in \N$, $(Y,\mathcal{Y},\nu, S)$ is a system, and $g_1,\ldots, g_r\in L^\infty(\nu).$ 
   
   \medskip
   
We want to show that the averages of $$\int f_0\cdot(\prod_{i=1}^\ell T_i^{[p_{i,1}(n)]})f_1\cdot \ldots\cdot (\prod_{i=1}^\ell T_i^{[p_{i,m}(n)]})f_m\, d\mu\cdot \int  S^{\ell_1n}g_1\cdot \ldots \cdot S^{\ell_r n}g_r\ d\nu$$ converge, so, we want to show that the averages of $$ (\prod_{i=1}^\ell T_i^{[p_{i,1}(n)]})f_1\cdot \ldots\cdot (\prod_{i=1}^\ell T_i^{[p_{i,m}(n)]})f_m \cdot \int  S^{\ell_1n}g_1\cdot \ldots \cdot S^{\ell_r n}g_r\ d\nu$$ converge in $L^2(\mu).$ 
   
\medskip   
   
 We will use the convergence consequence from Theorem~\ref{T:3*} for
the $\ell+r$ commuting measure preserving  transformations
$T_i\times \text{id},$ $i=1,\ldots, \ell,$ and  $\text{id}\times S^{\ell_j},$ $j=1,\ldots, r,$  acting on $X\times Y$ with the measure $\tilde{\mu}:=\mu\times \nu,$ and the functions $f_i\otimes 1,$ $i=1,\ldots, \ell$  and $1\otimes g_j,$ $j=1, \ldots, r$. 

\medskip

By Theorem~\ref{T:3*} we have that the averages of  $$ \prod_{j=1}^m\left((\prod_{i=1}^\ell (T_i\times \text{id})^{[p_{i,j}(n)]})(f_j\otimes 1)\right) \cdot \prod_{j=1}^r\left( (\text{id}\times S^{\ell_j})^n(1\otimes g_j)\right)$$ converge in $L^2(\mu\times \nu),$ and so, integrating by $d\nu,$ we get the required convergence.

\section{Weak-Anti-Uniformity} 

 In this section we will show that a sequence of the form \eqref{E:integer} is $k$-weak-anti-uniform, for some $k$ depending only on $\ell,$ $m$ and the maximum degree of  the polynomials $p_{i,j}.$
 In order to do so, we need a result (see Theorem~\ref{T:M2} below), that will allow us to pass from known results for flows, to $\Z$ actions.

\medskip

  As we saw in the previous section, in order to get the required convergence result to show $k$-regularity for sequences of the form \eqref{E:integer}, we used an argument (Theorem~\ref{T:M}) and a known, from \cite{W12}, convergence result for flows. We will now prove a similar result that will allow us to do the analogous thing in order to obtain the weak-anti-uniformity. The known result for (some particular) flows in this case will be the anti-uniformity that we obtain from \cite{F}.
 More specifically, we will show that the $k$-anti-uniformity of (some particular) flows will give us the $k$-weak-anti-uniformity for the sequences of the form \eqref{E:integer} (for the same $k$).

\begin{theorem}\label{T:M2}
Let $\ell, m\in \N.$ Suppose that the sequences of real numbers $(a_{i,j}(n))$  satisfy the following two properties:

\begin{enumerate}
\item For any $\R^{\ell m}$ measure preserving flow $\displaystyle \prod_{i=1}^{\ell}T_{i,a_{i,1}}\cdot \ldots\cdot \prod_{i=1}^\ell T_{i,a_{i,m}},$ where the transformations $T_{i,a_{i,j}}$ are defined in the probability space $(X,\mathcal{X},\mu)$ and functions $f_0, f_1,\ldots,$ $ f_m\in L^\infty(\mu),$ the sequence $$\tilde{a}(n)=\int f_0\cdot(\prod_{i=1}^\ell T_{i,a_{i,1}(n)})f_1\cdot \ldots\cdot (\prod_{i=1}^\ell T_{i,a_{i,m}(n)})f_m\,d\mu$$ is $k$-anti-uniform, for some $k$ depending only on $\ell,$ $m$ and $a_{i,j};$

\item  $\displaystyle \lim_{\delta\to 0^+} d^{\ast}\left(\Big\{n\colon \{a_{i,j}(n)\}\in [1-\delta,1)\Big\}\right)=0,$ for all $1\leq i\leq \ell,$  $1\leq j\leq m,$ where $\{\cdot\}$ denotes the fractional part.
\end{enumerate}

\noindent Then, the sequence $$a(n)=\int f_0\cdot(\prod_{i=1}^\ell T_i^{[a_{i,1}(n)]})f_1\cdot\ldots\cdot(\prod_{i=1}^\ell T_i^{[a_{i,m}(n)]})f_m\,d\mu$$ is $k$-weak-anti-uniform  for every system $(X,\mathcal{X},\mu,T_1,\ldots, T_\ell)$ and functions 
$f_0,f_1,\ldots, f_m\in L^\infty(\mu).$
\end{theorem}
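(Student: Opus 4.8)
The plan is to adapt the ``special flow above the constant ceiling function $1$'' construction of Theorem~\ref{T:M} to the present setting, in which we must control a correlation-type linear functional $\langle a,b\rangle$ against the $U_k(\N)$-seminorm of an arbitrary bounded $b$, rather than merely establishing convergence. First I would reproduce the construction from the proof of Theorem~\ref{T:M} verbatim: build the space $Y=X\times[0,1)^{\ell m}$ with $\nu=\mu\times\lambda^{\ell m}$, define the $\R^{\ell m}$ flow $\prod_{j=1}^m\prod_{i=1}^\ell T_{i,a_{i,j}}$ via the identity $[x+\{y\}]+[y]=[x+y]$, and lift each $f_i$ to $\hat f_i$ by ignoring the torus coordinates. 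As before, evaluating at the base point $(x,0,\ldots,0)$ recovers $\prod_{j}(\prod_i T_i^{[a_{i,j}(n)]})f_j(x)$, so the sequence $a(n)$ of the theorem is obtained by integrating the base-fibre version of the flow correlation sequence $\tilde a(n)$ from hypothesis~(i).

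The core of the argument is the following: fix $b\in\ell^\infty(\N)$ with $\norm{b}_\infty\le 1$ and $0<\delta<1$; I want to bound $\limsup_{N-M\to\infty}|\frac{1}{N-M}\sum_{n=M}^{N-1}a(n)b(n)|$. For a parameter vector $\bt=(b_{i,j})$ in the small cube $[0,\delta)^{\ell m}$, write $a_\bt(n):=\int f_0\cdot\prod_j(\prod_i T_i^{[a_{i,j}(n)+b_{i,j}]})f_j\,d\mu$, which is exactly the base-fibre correlation sequence of the flow, shifted by $\bt$; for $\bt$ in the fibre direction this is $\tilde a_\bt(n):=\int f_0\cdot\prod_j(\prod_i T_{i,a_{i,j}(n)})\hat f_j(x,\bt)\,d\mu$ over $(X,\bt)$. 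Integrating over $\bt\in[0,1)^{\ell m}$ and using that the flow correlation sequence is $k$-anti-uniform by hypothesis~(i) — applied to the flow on $Y$ with functions $\hat f_0,\ldots,\hat f_m$ — gives
$$
\limsup_{N-M\to\infty}\Big|\frac{1}{N-M}\sum_{n=M}^{N-1}\Big(\int_{[0,1)^{\ell m}}a_\bt(n)\,d\bt\Big)b(n)\Big|\le C(k)\,\norm{b}_{U_k(\N)}.
$$
On the other hand, exactly the computation from the proof of Theorem~\ref{T:M} — comparing the integrand at $\bt$ with the integrand at $\bzero$, noting that $[a_{i,j}(n)+b_{i,j}]=[a_{i,j}(n)]$ whenever $b_{i,j}\le\delta$ and $\{a_{i,j}(n)\}<1-\delta$, and using that $\norm{f_j}_\infty\le1$ — shows that for each fixed $n$, $|a_\bt(n)-a_\bzero(n)|\le 2$ and this bound is improved to $0$ unless $n$ lies in the exceptional set $E_\delta:=\bigcup_{i,j}\{n:\{a_{i,j}(n)\}\in[1-\delta,1)\}$. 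Hence $|\frac{1}{N-M}\sum_{n=M}^{N-1}(a_\bt(n)-a_\bzero(n))b(n)|\le 2\cdot\frac{|E_\delta\cap[M,N)|}{N-M}$, and since $a_\bzero=a$ and $a(n)=\int_{[0,1)^{\ell m}}a_\bzero(n)\,d\bt$ trivially, averaging the difference over $\bt$ and taking $\limsup_{N-M\to\infty}$ yields
$$
\limsup_{N-M\to\infty}\Big|\frac{1}{N-M}\sum_{n=M}^{N-1}a(n)b(n)\Big|\le C(k)\,\norm{b}_{U_k(\N)}+2\,d^\ast(E_\delta).
$$
Setting $C_\delta:=C(k)$ (independent of $\delta$, in fact) and $c_\delta:=2\,d^\ast(E_\delta)$, hypothesis~(ii) gives $c_\delta\to0$ as $\delta\to0^+$, which is precisely the definition of $k$-weak-anti-uniformity, and the same $k$ as in hypothesis~(i) works.

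The main obstacle I anticipate is bookkeeping rather than a genuine difficulty: one must be careful that the $k$-anti-uniformity constant coming from hypothesis~(i), applied to the flow on the enlarged space $Y$, is independent of the point $\bt$ — but this is automatic since the relevant sequence $\tilde a$ on $Y$ does not depend on a choice of base point, it is a genuine correlation sequence of the flow, so hypothesis~(i) applies once and for all with a single constant $C(k,\ell,m,a_{i,j})$. A second point requiring care is the interchange of $\limsup_{N-M\to\infty}$ with the integral $\int_{[0,1)^{\ell m}}d\bt$: this is legitimate by the dominated convergence theorem applied along any fixed sequence of intervals realizing the $\limsup$, because the averages are uniformly bounded by $\norm{f_0}_\infty\cdots\norm{f_m}_\infty\le1$, together with the observation that one only needs the inequality (an upper bound for the $\limsup$), not an exact limit, so no regularity of $b$ along intervals is required here. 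Everything else is a transcription of the proof of Theorem~\ref{T:M}.
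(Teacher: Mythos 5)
There is a genuine gap in the comparison step, and it is not a bookkeeping issue. Your anti-uniformity input from hypothesis (i) is applied to the sequence $\tilde a(n)=\int_{[0,1)^{\ell m}}a_{\mathbf t}(n)\,d{\mathbf t}$, i.e.\ the flow correlation sequence with $\hat f_0$ lifted over the \emph{full} cube. But your comparison estimate $|a_{\mathbf t}(n)-a_{\boldsymbol 0}(n)|\leq 2\cdot{\bf 1}_{E_\delta}(n)$ is only valid for ${\mathbf t}\in[0,\delta)^{\ell m}$: for a generic ${\mathbf t}$ (say all $b_{i,j}$ near $1/2$) one has $[a_{i,j}(n)+b_{i,j}]=[a_{i,j}(n)]+1$ for a positive-density set of $n$ having nothing to do with $E_\delta$, so $\int_{[0,1)^{\ell m}}a_{\mathbf t}(n)\,d{\mathbf t}$ is \emph{not} within $2\,{\bf 1}_{E_\delta}(n)$ of $a(n)$. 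Consequently the displayed inequality $\limsup_{N-M\to\infty}\big|\frac{1}{N-M}\sum_{n=M}^{N-1}a(n)b(n)\big|\leq C(k)\norm{b}_{U_k(\N)}+2\,d^\ast(E_\delta)$ with a $\delta$-independent constant does not follow. Indeed, if it did, letting $\delta\to 0^+$ would show that $a$ is genuinely $k$-anti-uniform, which is strictly stronger than the theorem and is precisely what the paper says it cannot prove for integer-part sequences; that should have been a warning sign.

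The correct repair is the one the paper uses: keep the flow on $Y=X\times[0,1)^{\ell m}$ but take $\hat f_0:=f_0\otimes{\bf 1}_{[0,\delta]^{\ell m}}$, so that the genuine flow correlation sequence equals $\tilde a(n)=\int_{[0,\delta]^{\ell m}}a_{\mathbf t}(n)\,d\lambda^{\ell m}({\mathbf t})$. Hypothesis (i) still applies to this $\tilde a$ (it is a bona fide correlation sequence of the flow), and now your small-cube comparison is exactly what is needed: $|\delta^{\ell m}a(n)-\tilde a(n)|$ is controlled by ${\bf 1}_{E_\delta}(n)$ up to a bounded factor. Dividing by $\delta^{\ell m}$ yields $\limsup\big|\frac{1}{N-M}\sum a(n)b(n)\big|\leq \frac{C}{\delta^{\ell m}}\norm{b}_{U_k(\N)}+c_\delta$ with $c_\delta\to 0$; the constant $C_\delta=C/\delta^{\ell m}$ necessarily blows up as $\delta\to 0^+$, which is exactly why the conclusion is only $k$-\emph{weak}-anti-uniformity. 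Your secondary worries (uniformity of the constant over the base point, interchange of $\limsup$ with the ${\mathbf t}$-integral) are indeed harmless; the real issue is the mismatch between the domain of integration where hypothesis (i) is invoked and the domain where the floor functions agree.
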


\begin{proof}
Let $0<\delta<1.$ We define  the same action $\R^{\ell m}$ on $Y=X\times [0,1)^{\ell m}$ as we did in the proof of Theorem~\ref{T:M}. If $f_0,f_1,\ldots,f_m$ are bounded functions on $X,$ for every $(b_{1,1},\ldots,b_{\ell,1},b_{1,2},\ldots,b_{\ell,2},\ldots,b_{1,m},\ldots,b_{\ell,m})\in [0,1)^{\ell m}$   we define the $Y$-extensions $$\hat{f}_j(x,b_{1,1},\ldots,b_{\ell,1},b_{1,2},\ldots,b_{\ell,2},\ldots,b_{1,m},\ldots,b_{\ell,m})=f_j(x),\;\;1\leq j\leq m;$$  and $$\hat{f}_0(x,b_{1,1},\ldots,b_{\ell,1},b_{1,2},\ldots,b_{\ell,m})=f_0(x)\cdot {\bf 1}_{[0,\delta]^{\ell m}}(b_{1,1},\ldots,b_{\ell,1},b_{1,2},\ldots,b_{\ell,m}).$$ Then$,$  we have:
$$\Big|\delta^{\ell m} a(n)-\tilde{a}(n)\Big|= $$
$$\Big|\int_{[0,\delta]^{\ell m}} \int_X f_0(x)\cdot \left(\prod_{j=1}^m f_j(\prod_{i=1}^\ell T_i^{[a_{i,j}(n)]}x)-\prod_{j=1}^m f_j(\prod_{i=1}^\ell T_i^{[a_{i,j}(n)+b_{i,j}]}x) \right)\,d\mu  \, d \lambda^{\ell m}\Big|. $$
 Since all the relevant $b_{i,j}$ in the integrand are less or equal than $\delta,$ if the fractional part of all $a_{i,j}(n)$ is less than $1-\delta,$ we have $T_i^{[a_{i,j}(n)+b_{i,j}]}=T_i^{[a_{i,j}(n)]}$ for all $1\leq i\leq \ell,$ $1\leq j\leq m.$ If the fractional part of some $a_{i,j}(n)$ is greater or equal than $1-\delta,$ by the Condition~(ii), we have that the upper Banach density of these $n$'s is as small as we want by taking $\delta$ small. All $f_j$ are bounded, and so, the combined density of these terms in the averages will be as small as we want independently of $x.$

Hence, by taking the averages and using the triangle inequality$,$ for any $(b(n))\in \ell^\infty(\N)$ and $N>M,$ we have that $$\Big|\frac{1}{N-M} \sum_{n=M}^{N-1}a(n)b(n)\Big|\leq \frac{1}{\delta^{\ell m}}\Big|\frac{1}{N-M} \sum_{n=M}^{N-1}\tilde{a}(n)b(n)\Big| +c_\delta, $$ where $c_\delta \to 0$ as $\delta\to 0^+.$

Then, if $C$ is the constant that we get from the $k$-anti-uniformity of $\tilde{a},$ we have
$$\limsup_{N-M\to\infty}\Big|\frac{1}{N-M} \sum_{n=M}^{N-1}a(n)b(n)\Big|\leq \frac{C}{\delta^{\ell m}}\cdot \norm{b}_{U_k(\N)}+c_\delta, $$
 and so$,$ we have the result.
\end{proof}

\begin{remark*}\label{R:2}
As we showed in the proof of Theorem~\ref{T:3*}$,$ if every $a_{i,j}$ is a polynomial $p_{i,j} \in \R[t],$ then the Condition (ii) of Theorem~\ref{T:M2} is satisfied. In order to get the Condition (i), as described by Theorem 1.2 in \cite{F}, for the corresponding sequences, we have to successively make use of Lemma~\ref{L:VDC}  (using the van der Corput operation, choosing every time appropriate polynomials in order to have reduction in our complexity), defined in Section 6 below. $k$ can be chosen to be equal to $d+1,$ where $d$ is the number of steps we need to do in order our polynomials to be reduced into constant ones, by using the PET induction. This $d,$ and so $k$ as well, only depends on $\ell,$ $m$ and the maximum degree of the polynomials $p_{i,j}.$

 For more information and details on the van der Corput operation and the scheme of the PET induction we are using here, we refer the reader to \cite{FrHK11}.
\end{remark*}

 So, by using the previous remark and Theorem~\ref{T:M2}, we have that every sequence $(a(n))$ of the form \eqref{E:integer} is $k$-weak-anti-uniform, for some positive integer $k=k(\ell,m,\max \deg(p_{i,j})).$
 
\section{Convergence}

In this section we will present all the ingredients in order to prove  Theorems~\ref{T:F1} (which we prove in Section 6) and ~\ref{T:F2} (which we prove below). More specifically, in order to derive these results, we prove Theorem~\ref{T:F4} below, which is yet another result for proving results for $\Z$-actions  via  known results for flows. In particular, Theorem~\ref{T:F1} will follow from Theorem~\ref{T:F4} and the analogous result from \cite{CFH} (Theorem~1.2) while Theorem~\ref{T:F2}, which is the analogous result to Corollary~5.2 from \cite{CFH}, is an implication of Proposition~\ref{P:F3} which follows from Theorem~\ref{T:F4} and a result  from \cite{CFH} (\mbox{\cite[Proposition 5.1]{CFH}}).

\medskip

For $\ell, m\in \N$ and a system $(X,\mathcal{X},\mu,T_1,\ldots, T_\ell),$ recall the definition of the $\R^{\ell m}$ measure preserving flow $\displaystyle \prod_{i=1}^{\ell}T_{i,a_{i,1}}\cdot \ldots\cdot \prod_{i=1}^\ell T_{i,a_{i,m}}$ on the space $Y=X\times [0,1)^{\ell m}$ that we defined in the proof of Theorem~\ref{T:M}. Also, If $f_1,\ldots,f_m\in L^2(\mu),$ for every $(b_{1,1},\ldots,b_{\ell,1},b_{1,2},\ldots,b_{\ell,m})\in [0,1)^{\ell m}$ let $\hat{f}_j(x,b_{1,1},\ldots,b_{\ell,1},b_{1,2},\ldots,b_{\ell,m})=f_j(x),$ $1\leq j\leq m$ be the $Y$-extensions of $f_j.$

\medskip

Following this notation, we have:

\begin{theorem}\label{T:F4}
Let $\ell, m\in \N,$ $(X,\mathcal{X},\mu,T_1,\ldots, T_\ell)$ a system, $f_1,\ldots, f_m\in L^\infty(\mu)$ and sequences of real numbers $(a_{i,j}(n)),$ $1\leq i\leq \ell,$ $1\leq j\leq m,$  that satisfy the following:
\begin{enumerate}
\item For the $\R^{\ell m}$ action $\displaystyle \prod_{i=1}^{\ell}T_{i,a_{i,1}}\cdot \ldots\cdot \prod_{i=1}^\ell T_{i,a_{i,m}}$ on the space $Y,$ endowed with the probability measure $\nu=\mu\times\lambda^{\ell m},$ and the extensions $\hat{f}_1,\ldots,$ $ \hat{f}_m\in L^\infty(\nu),$ we have 
$$\lim_{N-M\to\infty}\norm{\frac{1}{N-M}\sum_{n=M}^{N-1}(\prod_{i=1}^\ell T_{i,a_{i,1}(n)})\hat{f}_1\cdot \ldots\cdot (\prod_{i=1}^\ell T_{i,a_{i,m}(n)})\hat{f}_m}_{L^2(\nu)}=0 ;$$

\item  $\displaystyle \lim_{\delta\to 0^+} d^{\ast}\left(\Big\{n\colon \{a_{i,j}(n)\}\in [1-\delta,1)\Big\}\right)=0,$ for all $1\leq i\leq \ell,$  $1\leq j\leq m.$
\end{enumerate}
\noindent Then,  we have 
$$\lim_{N-M\to\infty}\norm{\frac{1}{N-M}\sum_{n=M}^{N-1} (\prod_{i=1}^\ell T_i^{[a_{i,1}(n)]})f_1\cdot\ldots\cdot(\prod_{i=1}^\ell T_i^{[a_{i,m}(n)]})f_m}_{L^2(\mu)}=0.$$
\end{theorem}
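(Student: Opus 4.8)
\textbf{Proof proposal for Theorem~\ref{T:F4}.}
The plan is to imitate the proof of Theorem~\ref{T:M}, replacing the "not Cauchy" argument by a "does not converge to $0$" argument, since here we want the limit of the $\Z$-action averages to be exactly $0$ rather than merely convergent. I would work on the same space $Y=X\times[0,1)^{\ell m}$ with $\nu=\mu\times\lambda^{\ell m}$ and the same $\R^{\ell m}$ flow built from the identity $[x+\{y\}]+[y]=[x+y]$, together with the extensions $\hat f_j(x,b_{1,1},\dots,b_{\ell,m})=f_j(x)$. The key relation is that, for $(b_{i,j})\in[0,1)^{\ell m}$,
$$\prod_{j=1}^m(\prod_{i=1}^\ell T_{i,a_{i,j}(n)})\hat f_j\,(x,b_{1,1},\dots,b_{\ell,m})=\prod_{j=1}^m f_j(\prod_{i=1}^\ell T_i^{[a_{i,j}(n)+b_{i,j}]}x),$$
so that evaluating the flow average $A_N$ at $(x,0,\dots,0)$ reproduces the $\Z$-action average whose norm we must control.

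Write $A_N(x,b_{1,1},\dots,b_{\ell,m})$ for the $N$-th average of $\prod_{j=1}^m(\prod_{i=1}^\ell T_{i,a_{i,j}(n)})\hat f_j$ on $Y$. Hypothesis (i) gives $\norm{A_N}_{L^2(\nu)}\to0$. By Fubini this means $\int_{[0,1)^{\ell m}}\big(\int_X|A_N(x,b_{1,1},\dots,b_{\ell,m})|^2\,d\mu\big)\,d\lambda^{\ell m}\to0$, so for each $N$ the inner $L^2(\mu)$-norm is small for most $(b_{i,j})$, in particular for some $(b_{i,j})$ arbitrarily close to $(0,\dots,0)$. The second step is the pointwise comparison already carried out in the proof of Theorem~\ref{T:M2}: for $0\le b_{i,j}\le\delta<1$, whenever $\{a_{i,j}(n)\}<1-\delta$ for all $i,j$ one has $T_i^{[a_{i,j}(n)+b_{i,j}]}=T_i^{[a_{i,j}(n)]}$, hence the $n$-th summand of $A_N(x,b_{1,1},\dots,b_{\ell,m})$ equals that of $A_N(x,0,\dots,0)$; the exceptional $n$'s, where some $\{a_{i,j}(n)\}\ge1-\delta$, have upper Banach density $\to0$ as $\delta\to0^+$ by hypothesis (ii), and since all $f_j$ are bounded by $1$ their total contribution to $|A_N(x,b_{1,1},\dots,b_{\ell,m})-A_N(x,0,\dots,0)|$ is $\le c_\delta$ uniformly in $x$ and in $N$ large, with $c_\delta\to0$. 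Integrating in $x$ gives $\norm{A_N(\cdot,0,\dots,0)-A_N(\cdot,b_{1,1},\dots,b_{\ell,m})}_{L^2(\mu)}\le c_\delta$ for all large $N$.

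Combining the two estimates: fix $\ve>0$, choose $\delta$ with $c_\delta<\ve/2$; for each large $N$ pick $(b_{i,j})\in[0,\delta]^{\ell m}$ with $\norm{A_N(\cdot,b_{1,1},\dots,b_{\ell,m})}_{L^2(\mu)}<\ve/2$ (possible for all large $N$ because the average over $[0,\delta]^{\ell m}$ of this quantity tends to $0$ by (i) and Fubini, so it is $<\ve/2$ on a set of positive measure). Then by the triangle inequality $\norm{A_N(\cdot,0,\dots,0)}_{L^2(\mu)}<\ve$ for all large $N$. Since $A_N(\cdot,0,\dots,0)$ is exactly $\frac1{N-M}\sum_{n=M}^{N-1}(\prod_i T_i^{[a_{i,1}(n)]})f_1\cdots(\prod_i T_i^{[a_{i,m}(n)]})f_m$, this is the desired conclusion. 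I expect the only delicate point to be bookkeeping the uniformity in $N$: hypothesis (i) is a statement as $N-M\to\infty$, so one must phrase the selection of a good $(b_{i,j})$ for each sufficiently large window $[M,N)$ and make sure the density bound $c_\delta$ in the comparison step is genuinely independent of the window, which it is because $d^\ast$ is defined via $\limsup_{N-M\to\infty}$; no new idea beyond the proofs of Theorems~\ref{T:M} and \ref{T:M2} is needed.
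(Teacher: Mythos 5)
Your proof is correct and follows essentially the same route as the paper: both transfer hypothesis (i) to the $\Z$-action through the special flow on $Y=X\times[0,1)^{\ell m}$, using the identity $T_i^{[a_{i,j}(n)+b_{i,j}]}=T_i^{[a_{i,j}(n)]}$ for $b_{i,j}\le\delta$ off the sparse set of $n$ controlled by hypothesis (ii). The only cosmetic difference is that you select, for each window $[M,N)$, a single good fiber point $b\in[0,\delta]^{\ell m}$ via Fubini (as in the proof of Theorem~\ref{T:M}), whereas the paper averages over the whole cube $[0,\delta]^{\ell m}$ (as in the proof of Theorem~\ref{T:M2}), comparing $\delta^{\ell m}a(n)$ with the fiber integral $a'(n)$ and then applying the triangle inequality; both versions rest on the same two estimates and yield the same conclusion.
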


\begin{proof}
Let $0<\delta<1$ and define the function $\hat{f}_0$ in $Y$ with $$\hat{f}_0(x,b_{1,1},\ldots,b_{\ell,1},b_{1,2},\ldots,b_{\ell,m})= {\bf 1}_{[0,\delta]^{\ell m}}(b_{1,1},\ldots,b_{\ell,1},b_{1,2},\ldots,b_{\ell,m}).$$ If $\;\tilde{a}(n)= \hat{f}_0\cdot(\prod_{i=1}^\ell T_{i,a_{i,1}(n)})\hat{f}_1\cdot \ldots\cdot (\prod_{i=1}^\ell T_{i,a_{i,m}(n)})\hat{f}_m,$ 

 for every $x\in X$ we define $$a'(n)(x)=\int_{[0,1)^{\ell m}}\tilde{a}(n)(x,b_{1,1},\ldots,b_{\ell,1},b_{1,2},\ldots,b_{\ell,m})\,d\lambda^{\ell m},$$ where the integration is with respect to the variables $b_{i,j}.$

Then, if $a(n):=(\prod_{i=1}^\ell T_i^{[a_{i,1}(n)]})f_1\cdot\ldots\cdot(\prod_{i=1}^\ell T_i^{[a_{i,m}(n)]})f_m,$ for every $x\in X,$ we have that 
$$\Big|\delta^{\ell m} a(n)(x)-a'(n)(x)\Big|= $$
$$\Big|\int_{[0,\delta]^{\ell m}}  \left(\prod_{j=1}^m f_j(\prod_{i=1}^\ell T_i^{[a_{i,j}(n)]}x)-\prod_{j=1}^m f_j(\prod_{i=1}^\ell T_i^{[a_{i,j}(n)+b_{i,j}]}x) \right)\, d \lambda^{\ell m}\Big|. $$
Since all the relevant $b_{i,j}$ in the integrand are less or equal than $\delta,$ if the fractional part of all $a_{i,j}(n)$ is less than $1-\delta,$ we have $T_i^{[a_{i,j}(n)+b_{i,j}]}=T_i^{[a_{i,j}(n)]}$ for all $1\leq i\leq \ell,$ $1\leq j\leq m.$ We will deal with the case where the fractional part of some $a_{i,j}(n)$ is greater or equal than $1-\delta.$ 

For every $1\leq i\leq \ell,$ $1\leq j\leq m,$ let
$$E_{\delta}^{i,j}:=\{n\in \N\colon \{a_{i,j}(n)\}\in [1-\delta,1)\}.$$
Then, by using the fact that ${\bf 1}_{E_\delta^{1,1}\cup \ldots\cup E_\delta^{1,m}\cup E_\delta^{2,1}\cup\ldots\cup E_\delta^{\ell,m}}\leq \sum_{(i,j)\in[1,\ell]\times[1,m]} {\bf 1}_{E_{\delta}^{i,j}} $ and that ${\bf 1}_{E_{\delta}^{i,j}}(n)={\bf 1}_{[1-\delta,1)}(\{a_{i,j}(n)\}),$ for $1\leq i\leq \ell,$ $1\leq j\leq m,$ $n\in\N,$  we have
$$\frac{1}{\delta^{\ell m}}\norm{\frac{1}{N-M}\sum_{n=M}^{N-1} (\delta^{\ell m}a(n)-a'(n))}_{L^2(\mu)}\leq 2\sum_{(i,j)\in[1,\ell]\times[1,m]}\Big| \frac{1}{N-M}\sum_{n=M}^{N-1} {\bf 1}_{[1-\delta,1)}(\{a_{i,j}(n)\}) \Big|,$$ where
$$\Big| \frac{1}{N-M}\sum_{n=M}^{N-1} {\bf 1}_{[1-\delta,1)}(\{a_{i,j}(n)\}) \Big|=\frac{|E_{\delta}^{i,j}\cap [M,N)|}{N-M}.$$
Using Condition (ii), we have that for small enough $\delta,$ the term (and the sum of finitely many terms of this form)  $\frac{|E_{\delta}^{i,j}\cap [M,N)|}{N-M}$ is as small as we want. Since,
$$\delta^{\ell m}\norm{\frac{1}{N-M}\sum_{n=M}^{N-1} a(n)}_{L^2(\mu)}\leq \norm{\frac{1}{N-M}\sum_{n=M}^{N-1} (\delta^{\ell m}a(n)-a'(n))}_{L^2(\mu)}+\norm{\frac{1}{N-M}\sum_{n=M}^{N-1} \tilde{a}(n)}_{L^2(\nu)}$$ we have that $$ \norm{\frac{1}{N-M}\sum_{n=M}^{N-1} a(n)}_{L^2(\mu)}\leq c_\delta +\delta^{-\ell m}\norm{\frac{1}{N-M}\sum_{n=M}^{N-1} \tilde{a}(n)}_{L^2(\nu)},$$ where $c_\delta\to 0$ as $\delta\to 0^+.$ We first take $\limsup_{N-M\to \infty},$ in order the second term of the right-hand side to disappear from Condition (i), and then $\delta\to 0^+$ to get the result. 
\end{proof}

We will also need the following elementary estimate (for simplicity, we use the notation $\text{U-}\limsup_{n_1,\ldots,n_k}\E_{n_1,\ldots,n_k}$ instead of $\limsup_{N-M\to\infty}\frac{1}{N-M}\sum_{n_1=M}^{N-1}\ldots \limsup_{N-M\to\infty}\frac{1}{N-M}\sum_{n_k=M}^{N-1}$).

\begin{lemma}\label{L:F5}
Let $k\in\N$ and $s\in (0,+\infty).$ For any sequence $(a(n_1,\ldots,n_k))$ of real non-negative numbers we have 
$$\text{{\em U}-}\limsup_{n_1,\ldots,n_k}\E_{n_1,\ldots,n_k} a([n_1 s],\ldots, [n_k s])\leq s^k(\Big[\frac{1}{s}\Big]+1)^k \text{{\em U}-}\limsup_{n_1,\ldots,n_k}\E_{n_1,\ldots,n_k} a(n_1,\ldots, n_k).$$
\end{lemma}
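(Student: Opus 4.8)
The plan is to induct on $k$, with essentially all the content living in the base case $k=1$; for larger $k$ the statement will follow by unfolding the nested operator. Throughout I read $\text{U-}\limsup_{n_1,\ldots,n_k}\E_{n_1,\ldots,n_k}$ as the nested expression $\limsup_{N\to\infty}\sup_M\frac1N\sum_{n_1=M}^{M+N-1}\big(\cdots\big)$, valued in $[0,+\infty]$. The base case to establish is
\[
\text{U-}\limsup_{n}\E_{n}\, a([ns])\;\le\; s\big([1/s]+1\big)\,\text{U-}\limsup_{n}\E_{n}\, a(n)
\]
for an arbitrary non-negative sequence $(a(n))$. If the right-hand side is $+\infty$ there is nothing to prove; otherwise I write it as $\alpha$, fix $\varepsilon>0$, and choose $L_0$ with $\sup_{M'}\frac1L\sum_{m=M'}^{M'+L-1}a(m)\le\alpha+\varepsilon$ for all $L\ge L_0$.

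For the base case the two ingredients are a multiplicity count and a length comparison. For a fixed integer $m$, the integers $n$ with $[ns]=m$ form the set $\{n\colon m/s\le n<(m+1)/s\}$, an interval of length $1/s$, hence of cardinality at most $[1/s]+1$; so each integer value is attained at most $[1/s]+1$ times. Also, for $n\in[M,M+N)$ one has $[Ms]\le[ns]\le[(M+N-1)s]$, so the values $[ns]$ lie in a block $J=J(M,N)$ of consecutive integers with $(N-1)s-1\le|J|\le(N-1)s+2$. These combine to give
\begin{align*}
\frac1N\sum_{n=M}^{M+N-1}a([ns])
&\;\le\;\big([1/s]+1\big)\,\frac1N\sum_{m\in J}a(m)\\
&\;\le\;\big([1/s]+1\big)\,\frac{|J|}{N}\,\sup_{M'}\frac1{|J|}\sum_{m=M'}^{M'+|J|-1}a(m).
\end{align*}
For $N$ large — with a threshold depending only on $s$ and $L_0$, not on $M$ — we have $|J|\ge L_0$ and $|J|/N\le s+2/N$, so the displayed quantity is $\le\big([1/s]+1\big)(s+2/N)(\alpha+\varepsilon)$; taking $\sup_M$, then $N\to\infty$, then $\varepsilon\to0^+$ finishes the base case.

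For the inductive step I would use two elementary facts about the nested operator: \emph{monotonicity} (if $0\le b\le b'$ pointwise on $\N^k$ then $\text{U-}\limsup_{n_1,\ldots,n_k}\E_{n_1,\ldots,n_k}\,b\le\text{U-}\limsup_{n_1,\ldots,n_k}\E_{n_1,\ldots,n_k}\,b'$, proved by the same induction), and the definitional identity $\text{U-}\limsup_{n_1,\ldots,n_k}\E_{n_1,\ldots,n_k}\,a=\text{U-}\limsup_{n_1,\ldots,n_{k-1}}\E_{n_1,\ldots,n_{k-1}}\big(\text{U-}\limsup_{n_k}\E_{n_k}\,a\big)$. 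Setting $a_1(n_1,\ldots,n_{k-1}):=\text{U-}\limsup_{n_k}\E_{n_k}\,a(n_1,\ldots,n_{k-1},n_k)$, the base case applied in the last variable, with $n_1,\ldots,n_{k-1}$ frozen as the integers $[n_1s],\ldots,[n_{k-1}s]$, gives for all integers $n_1,\ldots,n_{k-1}$
\[
\text{U-}\limsup_{n_k}\E_{n_k}\,a([n_1s],\ldots,[n_{k-1}s],[n_ks])\;\le\; s\big([1/s]+1\big)\,a_1([n_1s],\ldots,[n_{k-1}s]).
\]
Applying $\text{U-}\limsup_{n_1,\ldots,n_{k-1}}\E_{n_1,\ldots,n_{k-1}}$ to both sides (using monotonicity and pulling out the constant) and then the induction hypothesis to the $(k-1)$-variable sequence $a_1$ yields
\[
\text{U-}\limsup_{n_1,\ldots,n_k}\E_{n_1,\ldots,n_k}\,a([n_1s],\ldots,[n_ks])\;\le\; s^k\big([1/s]+1\big)^k\,\text{U-}\limsup_{n_1,\ldots,n_{k-1}}\E_{n_1,\ldots,n_{k-1}}\,a_1,
\]
and the right-hand side equals $s^k([1/s]+1)^k\,\text{U-}\limsup_{n_1,\ldots,n_k}\E_{n_1,\ldots,n_k}\,a$ by the definitional identity. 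I expect the only genuinely delicate point to be the bookkeeping in the base case — pinning down the multiplicity bound $[1/s]+1$ and the ratio $|J|/N\to s$ uniformly in $M$ — together with the routine but necessary care, in the inductive step, that the already-substituted coordinates $[n_i s]$ be held fixed while the innermost average is estimated, so that the base case applies verbatim.
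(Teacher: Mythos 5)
Your proof is correct and follows essentially the same route as the paper: the $k=1$ case via the multiplicity bound $[1/s]+1$ on the fibers of $n\mapsto[ns]$ together with the length ratio $|J|/N\to s$, and then induction on $k$ (which the paper simply asserts "follows analogously"). Your write-up merely supplies the uniformity-in-$M$ bookkeeping and the monotonicity/nesting details that the paper leaves implicit.
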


\begin{proof}
For $k=1$ we have
\begin{eqnarray*}
\frac{1}{N-M}\sum_{n=M}^{N-1}a([ns]) & \leq & (\Big[\frac{1}{s}\Big]+1)\frac{1}{N-M}\sum_{n=[Ms]}^{[(N-1)s]}a(n) \\
& = & (\Big[\frac{1}{s}\Big]+1)\frac{[(N-1)s]-[Ms]}{N-M}\frac{1}{[(N-1)s]-[Ms]}\sum_{n=[Ms]}^{[(N-1)s]}a(n).
\end{eqnarray*} Since $\lim_{N-M\to\infty}\frac{[(N-1)s]-[Ms]}{N-M}=s,$ by taking $\limsup_{N-M\to\infty}$ will give us the required relation.

The general $k>1$ case follows analogously with induction.
\end{proof}

For $\ell, m\in \N,$ $(X,\mathcal{X},\mu,T_1,\ldots,T_\ell)$ system and $f_1,\ldots,f_m\in L^\infty(\mu)$, recall the action $\displaystyle \prod_{i=1}^{\ell}T_{i,a_{i,1}}\cdot \ldots\cdot \prod_{i=1}^\ell T_{i,a_{i,m}}$ that we defined in the proof of Theorems~\ref{T:M}, ~\ref{T:M2} and ~\ref{T:F4} and the $Y$-extensions, $\hat{f}_j$ of $f_j,$ where $Y=X\times [0,1)^{\ell m}$ is endowed with the probability measure $\nu=\mu\times\lambda^{\ell m}.$

\medskip

 By the definition of the action, the first coordinate of $T_{i_0,a_{i_0,j_0}}$ evaluated at the point $(x,b_{1,1},\ldots,b_{\ell,1},b_{1,2},\ldots,b_{\ell,m})\in Y$ is $T^{[a_{i_0,j_0}+b_{i_0,j_0}]}_{i_0}x,$ the $((j_{0}-1)\ell+i_0+1)$-coordinate is equal to $\{a_{i_0,j_0}+b_{i_0,j_0}\},$ while in all the other coordinates we have the identity map, mapping $b_{i,j}$ to itself.

\medskip

 So, without loss of generality, in order to study the transformations $T_{i,s},$ $s\in \R,$ which we will essentially use in the proof of Theorems~\ref{T:F1} and ~\ref{T:F2},  we restrict our study to the $\ell=m=1$ case, studying the transformation $S=T_{s},$ where $s\in (0,+\infty)$ (in the case where $s<0,$ we set $S=T^{-1}_{-s}$).  Hence, we study the transformation $S(x,b)=(T^{[s+b]}x,\{s+b\}).$ 
 
\medskip 
 
  By making use of the relations $[s+\{s'\}]+[s']=[s+s']$ and $\{s+\{s'\}\}=\{s+s'\},$ we have that $S^n(x,b)=(T^{[ns +b]}x,\{ns+b\}),$ and so, $S^n \hat{f}(x,b)=T^{[ns+b]}f(x).$ 

\medskip

The next important lemma will give us a relation between $\nnorm{\hat{f}}_{k,\nu,S}$ and $\nnorm{f}_{k,\mu,T}$ (recall the definitions and remarks from Subsection 2.2).

\begin{lemma}\label{L:F6}
With the previous terminology, for any $k\in \N,$ there exists a constant $c=c(k,s)$ such that $$\nnorm{\hat{f}}_{k,\nu,S}\leq c \nnorm{f}_{k+1,\mu,T}.$$
\end{lemma}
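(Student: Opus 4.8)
The plan is to reduce the seminorm inequality on $Y$ to one on $X$ by unwinding the inductive definition of $\nnorm{\cdot}_{k}$ and exploiting the explicit formula $S^n\hat f(x,b) = T^{[ns+b]}f(x)$ together with Lemma~\ref{L:F5}. I proceed by induction on $k$. For the base case $k=1$, I use the von Neumann form $\nnorm{\hat f}_{1,\nu,S}^2 = \text{U-}\limsup_n \E_n \int_Y \bar{\hat f}\cdot S^n\hat f\,d\nu$; writing this out over $Y = X\times[0,1)$ and using $S^n\hat f(x,b)=T^{[ns+b]}f(x)$, the $b$-integral collapses the average to something comparable, via Lemma~\ref{L:F5} (with $k=1$), to $\text{U-}\limsup_n \E_n \int_X \bar f\cdot T^n f\,d\mu = \nnorm{f}_{1,\mu,T}^2 \le \nnorm{f}_{2,\mu,T}^2$, giving the constant $c(1,s)$. (One must be slightly careful about the shift in the summation range coming from $[ns+b]$ versus $[ns]$ for $b\in[0,1)$; since the difference in exponents is at most $1$, it only affects finitely many transformations and is absorbed into the constant, or handled by noting $[ns+b]\in\{[ns],[ns]+1\}$ and splitting accordingly.)

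For the inductive step, I use $\nnorm{\hat f}_{k+1,\nu,S}^{2^{k+1}} = \text{U-}\limsup_n \E_n \nnorm{\bar{\hat f}\cdot S^n\hat f}_{k,\nu,S}^{2^k}$. The key observation is that $\bar{\hat f}\cdot S^n\hat f$ on $Y$ is, up to the fibrewise structure, the $Y$-extension of the function $\bar f\cdot T^{[ns+b]}f$ on $X$ — but the exponent $[ns+b]$ depends on $b$, so this is not literally an extension of a single function on $X$. The right way around this is to recognize that $(Y,S)$ is itself of the form to which the $\ell=m=1$ analysis applies with $T$ replaced by $T\times\mathrm{id}$ appropriately, or more directly: bound $\nnorm{\bar{\hat f}\cdot S^n\hat f}_{k,\nu,S}^{2^k}$ fibrewise. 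Concretely, for fixed $b$, $\bar{\hat f}(x,b)\cdot S^n\hat f(x,b) = \bar f(x)\cdot T^{[ns+b]}f(x)$, and applying the inductive hypothesis (in the form bounding $\nnorm{\cdot}_{k,\nu,S}$ of an extension) should give a bound by $\nnorm{\bar f \cdot T^m f}_{k+1,\mu,T}^{2^k}$ with $m=[ns+b]$ (or $m\in\{[ns],[ns]+1\}$). Then taking the $\text{U-}\limsup_n\E_n$ and invoking Lemma~\ref{L:F5} with exponent matching the remaining structure reduces the expression to $\text{U-}\limsup_m \E_m \nnorm{\bar f\cdot T^m f}_{k+1,\mu,T}^{2^k} = \nnorm{f}_{k+2,\mu,T}^{2^{k+2}}$, which together with the monotonicity $\nnorm{f}_{k+1,\mu,T}\le\nnorm{f}_{k+2,\mu,T}$ recorded in Subsection~2.2 yields the claim — note the loss of exactly one level at each stage is what forces the $k+1$ on the right-hand side rather than $k$.

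The main obstacle I anticipate is bookkeeping the $b$-dependence of the exponent $[ns+b]$ cleanly: the naive hope that $\bar{\hat f}\cdot S^n\hat f$ is the extension of a fixed function on $X$ fails, and one must either split $[0,1)$ into the region where $[ns+b]=[ns]$ and its complement (using Condition-(ii)-type density control, which here is automatic since we are averaging in $n$ and the "bad" set $\{n:\{ns\}\in[1-\delta,1)\}$ has small density for $s$ irrational, or handle $s$ rational separately by periodicity), or set up the induction so that the $\ell=m=1$ flow-extension structure is preserved at each step. A secondary subtlety is that $\text{U-}\limsup$ does not commute freely with the inner seminorm expressions, so one should phrase each step as an inequality ($\le$) throughout, which is all that is needed, and track the accumulated constant $c(k,s) = \bigl(s([1/s]+1)\bigr)^{\text{(something)}}$ explicitly via Lemma~\ref{L:F5} — this is routine once the structural reduction is in place.
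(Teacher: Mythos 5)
You correctly isolate the two computational ingredients (the identity $S^n\hat f(x,b)=T^{[ns+b]}f(x)$ and Lemma~\ref{L:F5}), and you even name the main obstacle yourself, but you do not resolve it, and the induction you set up cannot be closed as written. The difficulty is precisely the one you flag: $\bar{\hat f}\cdot S^n\hat f$ is not the $Y$-extension of any function on $X$, since $(x,b)\mapsto\bar f(x)T^{[ns+b]}f(x)$ equals $\bar f\cdot T^{[ns]}f$ for $b<1-\{ns\}$ and $\bar f\cdot T^{[ns]+1}f$ on the complementary $b$-interval, whose measure is $\{ns\}$ --- not small. Neither of your proposed fixes applies: the density-of-bad-$n$ argument from Theorems~\ref{T:M} and~\ref{T:M2} is unavailable because there $b$ is confined to a small box $[0,\delta]^{\ell m}$, whereas $\nnorm{\hat f}_{k,\nu,S}$ integrates over all of $[0,1)$; and the seminorm does not localize along a partition of the $b$-fibre in any case, since $S$ moves the $b$-coordinate. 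A further symptom that the induction does not align is the exponent mismatch in your final identity: it is $\text{U-}\lim_m\E_m\nnorm{\bar f\cdot T^mf}_{k+1,\mu,T}^{2^{k+1}}$ (not the power $2^k$) that equals $\nnorm{f}_{k+2,\mu,T}^{2^{k+2}}$.

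The paper's proof sidesteps the induction entirely by starting from the closed-form expression \eqref{E:norms}, in which every occurrence of $b$ sits inside a single integral as the exponent $[(\vec\epsilon\cdot\vec n)s+b]$. Writing $[(\vec\epsilon\cdot\vec n)s+b]=\sum_{i=1}^k[\epsilon_i n_i s]+e_{\vec\epsilon}$ with $e_{\vec\epsilon}\in\{0,\ldots,k\}$, it bounds the integral by $(k+1)^{2^k}$ times a maximum over the finitely many shift patterns $(e_{\vec\epsilon})$, applies Lemma~\ref{L:F5} coordinatewise to replace $[\epsilon_i n_i s]$ by $\epsilon_i n_i$, and then --- the step entirely absent from your proposal --- uses Cauchy--Schwarz to pass to the product system $R=T\times T$ with $F=f\otimes\bar f$, and invokes the Gowers--Cauchy--Schwarz-type inequality (Lemma~3.9 of \cite{HK99}) to dominate the result by $\prod_{\vec\epsilon}\nnorm{R^{e_{\vec\epsilon}}F}_{k,\mu\times\mu,R}$; shift-invariance of the seminorm then removes the $\vec\epsilon$-dependent shifts, and the inequality $\nnorm{f\otimes\bar f}_{k,\mu\times\mu,T\times T}\le\nnorm{f}^2_{k+1,\mu,T}$ is what actually produces the $k+1$ on the right-hand side. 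Without these last ingredients your argument has no mechanism for converting the $\vec\epsilon$-dependent integer shifts into a seminorm of $f$, so the gap is essential rather than cosmetic.
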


\begin{proof}
If $c_k=(k+1)^{2^k},$ $c_{k,s}=c_k\cdot s^k(\Big[\frac{1}{s}\Big]+1)^k,$ $F=f\otimes\bar{f}$ and $R=T\times T,$ then by the definition of the seminorm $\nnorm{\cdot}_k,$ by Lemma~\ref{L:F5}, the Cauchy-Schwarz inequality and the remarks in Subsection 2.2, we have (for simplicity, we use the notation $\text{U-}\lim_{n_1,\ldots,n_k}\E_{n_1,\ldots,n_k}$ instead of $\lim_{N-M\to\infty}\frac{1}{N-M}\sum_{n_1=M}^{N-1}\ldots \lim_{N-M\to\infty}\frac{1}{N-M}\sum_{n_k=M}^{N-1}$ and $\text{U-}\limsup_{n_1,\ldots,n_k}\E_{n_1,\ldots,n_k}$ for the $\limsup$ respectively, as we did in Lemma~\ref{L:F5})

\begin{eqnarray*}
\nnorm{\hat{f}}^{2^k}_{k,\nu,S} &=& \text{U-}\lim_{n_1,\ldots,n_k}\E_{n_1,\ldots,n_k}\int \prod_{\vec{\epsilon}\in \{0,1\}^k}\mathcal{C}^{|\epsilon|}S^{\vec{\epsilon}\cdot \vec{n}}\hat{f}\;d\nu \\ 
& \leq & \text{U-}\limsup_{n_1,\ldots,n_k}\E_{n_1,\ldots,n_k} \Big|\int \prod_{\vec{\epsilon}\in \{0,1\}^k}\mathcal{C}^{|\epsilon|}T^{[(\vec{\epsilon}\cdot \vec{n})s+b]}f\;d\nu \Big| \\
 & \leq &
 c_k\max_{e_{\vec{\epsilon}}\in \{0,\ldots,k\}} \text{U-}\limsup_{n_1,\ldots,n_k}\E_{n_1,\ldots,n_k} \Big|\int \prod_{\vec{\epsilon}\in \{0,1\}^k}\mathcal{C}^{|\epsilon|}T^{\sum_{i=1}^k[\epsilon_i n_i s]+e_{\vec{\epsilon}}}f\;d\mu \Big| \\ & \leq &
c_{k,s} \max_{e_{\vec{\epsilon}}\in \{0,\ldots,k\}}\text{U-}\limsup_{n_1,\ldots,n_k}\E_{n_1,\ldots,n_k} \Big|\int \prod_{\vec{\epsilon}\in \{0,1\}^k}\mathcal{C}^{|\epsilon|}T^{\vec{\epsilon}\cdot\vec{n}+e_{\vec{\epsilon}}}f\;d\mu \Big| \\
&\leq &
 c_{k,s} \max_{e_{\vec{\epsilon}}\in \{0,\ldots,k\}} \left(\text{U-}\limsup_{n_1,\ldots,n_k}\E_{n_1,\ldots,n_k} \Big|\int \prod_{\vec{\epsilon}\in \{0,1\}^k}\mathcal{C}^{|\epsilon|}T^{\vec{\epsilon}\cdot\vec{n}+e_{\vec{\epsilon}}}f\;d\mu \Big|^2\right)^{1/2} \\ & = &
 c_{k,s} \max_{e_{\vec{\epsilon}}\in \{0,\ldots,k\}} \left(\text{U-}\limsup_{n_1,\ldots,n_k}\E_{n_1,\ldots,n_k} \int \prod_{\vec{\epsilon}\in \{0,1\}^k}\mathcal{C}^{|\epsilon|}R^{\vec{\epsilon}\cdot\vec{n}+e_{\vec{\epsilon}}}F\;d(\mu\times\mu) \right)^{1/2},
\end{eqnarray*} using the Relation~(10) from \cite{HK99} and the Condition (1) of Lemma~3.9 from \cite{HK99}, this last term is bounded by
$$ c_{k,s} \max_{e_{\vec{\epsilon}}\in \{0,\ldots,k\}} \left(\prod_{\vec{\epsilon}\in \{0,1\}^k}\nnorm{R^{e_{\vec{\epsilon}}}F}_{k,\mu\times\mu,R} \right)^{1/2}=c_{k,s} \left(\nnorm{F}^{2^k}_{k,\mu\times\mu,R} \right)^{1/2}\leq c_{k,s} \nnorm{f}^{2^k}_{k+1,\mu,T},$$ where we have used the fact that $\nnorm{F}_{k,\mu\times\mu,R}=\nnorm{f\otimes\bar{f}}_{k,\mu\times\mu,T\times T}\leq \nnorm{f}^{2}_{k+1,\mu,T}.$
\end{proof}

In order to state the Proposition~\ref{P:F3} below, which is the analogous to Proposition~\ref{P:F8}, we need the following notation:

\medskip

Let $(X,\mathcal{X},\mu)$ be a probability space. If $(T_t)_{t\in\R}$ is a measure preserving flow and $p(t)=a_rt^r+\ldots+a_1t+a_0\in \R[t],$ we write 
\begin{equation}\label{E:FF}
T_{p(t)}=T^{t^r}_{a_r}\ldots T^{t}_{a_1} T_{a_0}.
\end{equation} 

We assign to the polynomial $p(t)$ an $(r+1)$-tuple of polynomials $\vec{p}=(p^{r}(t),\ldots,p^0(t)),$ where we put $p^i(t)=t^i$ if $a_i\neq 0$ and $p^i(t)=0$ otherwise. 

\medskip

We are now ready to define the notion of the $\R$-nice family.

\begin{definition*}
Let $\ell, m\in \N.$ The family of $m$ polynomial $\ell$-tuples of polynomials with maximum degree $d$ in $\R[t]$ $$(\mathcal{P}_1,\ldots,\mathcal{P}_\ell)=((p_{1,1},\ldots,p_{\ell,1}),\ldots,(p_{1,m},\ldots,p_{\ell,m}))$$ is called {\em $\R$-nice family} if the respective family of $m$ polynomial $ (d+1)\ell$-tuples $$ (\vec{\mathcal{P}}_1,\ldots,\vec{\mathcal{P}}_\ell):=((\vec{p}_{1,1},\ldots,\vec{p}_{\ell,1}),\ldots,(\vec{p}_{1,m},\ldots,\vec{p}_{\ell,m})),$$ where we complete the coordinates with zeros from the right in order every vector $\vec{p}_{i,j}$ to have $d+1$ coordinates, is a nice family (in $\Z[t]$).
\end{definition*}

We will now prove a proposition which implies Theorem~\ref{T:F2}.

\begin{proposition}\label{P:F3}
Let $\ell\in \N,$ $(X,\mathcal{X},\mu,T_1,\ldots,T_\ell)$ be a system, $(\mathcal{P}_1,\ldots,\mathcal{P}_\ell)$ be an $\R$-nice family of $\ell$-tuples of polynomials in $\R[t]$ with maximum degree $d$ and $f_1,\ldots,f_m\in L^\infty(\mu).$ Then there exists $k=k(d,\ell,m)\in\N$ such that if $\nnorm{f_1}_{k,\mu,T_1}=0,$ then the averages 
$$\frac{1}{N-M}\sum_{n=M}^{N-1}(\prod_{i=1}^\ell T^{[p_{i,1}(n)]}_i)f_1\cdot\ldots\cdot (\prod_{i=1}^\ell T^{[p_{i,m}(n)]}_i)f_m $$
converge to $0$ in $L^2(\mu)$ as $N-M\to\infty.$
\end{proposition}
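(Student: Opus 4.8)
The goal is to derive Proposition~\ref{P:F3} from the integer-valued case, Proposition~\ref{P:F8}, using the flow-to-$\Z$ transference machinery of Theorem~\ref{T:F4}. The natural strategy is: given the $\R$-nice family $(\mathcal{P}_1,\ldots,\mathcal{P}_\ell)$ with $a_{i,j}(n):=p_{i,j}(n)$, verify the two hypotheses of Theorem~\ref{T:F4}. Condition (ii) is automatic for real polynomial sequences by the argument already given in the proof of Theorem~\ref{T:3*} (Weyl equidistribution when some coefficient is irrational, periodicity mod $1$ otherwise), so the whole content lies in Condition (i): the averages of $(\prod_{i=1}^\ell T_{i,p_{i,1}(n)})\hat f_1\cdot\ldots\cdot(\prod_{i=1}^\ell T_{i,p_{i,m}(n)})\hat f_m$ converge to $0$ in $L^2(\nu)$ on $Y=X\times[0,1)^{\ell m}$, under the assumption $\nnorm{f_1}_{k,\mu,T_1}=0$ for suitable $k$.

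First I would unwind the flow: writing $p_{i,j}(t)=\sum_{r} a_{i,j}^{(r)} t^r$ and using \eqref{E:FF}, the flow transformation $T_{i,p_{i,j}(n)}$ is a product over degrees $r$ of the $\Z$-transformations $(T_{i,a_{i,j}^{(r)}})^{n^r}$ (times a fixed shift in the extra coordinates). Thus the flow average on $Y$ becomes a \emph{genuine polynomial-iterate average over $\Z$} for the commuting transformations $\{T_{i,a_{i,j}^{(r)}} : 1\le i\le\ell,\ 0\le r\le d\}$ acting on $Y$, with integer polynomial iterates $n^r$. Concretely, the exponents appearing are exactly the components of the vectors $\vec p_{i,j}$ from the definition of $\R$-nice, padded with zeros — so the new ordered family of $(d+1)\ell$-tuples of polynomials in $\Z[t]$ is precisely the nice family $(\vec{\mathcal P}_1,\ldots,\vec{\mathcal P}_\ell)$ guaranteed by the hypothesis. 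Hence Proposition~\ref{P:F8} applies: there is $k_0=k_0(d,\ell,m)$ such that $\nnorm{\hat f_1}_{k_0,\nu,S}=0$ forces the flow averages to converge to $0$ in $L^2(\nu)$, where $S$ is the flow transformation in the ``first'' slot, i.e.\ the one built from $T_{1,a_{1,1}^{(\cdot)}}$. By Lemma~\ref{L:F6} (applied coordinatewise, reducing to the one-transformation model $S(x,b)=(T^{[s+b]}x,\{s+b\})$ as done in the paragraph preceding it), $\nnorm{\hat f_1}_{k_0,\nu,S}\le c\,\nnorm{f_1}_{k_0+1,\mu,T_1}$. So choosing $k:=k_0+1=k(d,\ell,m)$, the assumption $\nnorm{f_1}_{k,\mu,T_1}=0$ yields $\nnorm{\hat f_1}_{k_0,\nu,S}=0$, hence Condition (i) of Theorem~\ref{T:F4}, and Theorem~\ref{T:F4} then delivers the conclusion.

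The main obstacle I anticipate is the bookkeeping in the reduction of the flow to an integer polynomial system: one must check that the $(d+1)\ell$-tuple of integer polynomials extracted from the flow really is the nice family $(\vec{\mathcal P}_1,\ldots,\vec{\mathcal P}_\ell)$ — in particular that the ``leading'' coordinate against which Proposition~\ref{P:F8} measures anti-uniformity (the $p_{1,1}$ slot, with the strict degree inequalities (i)--(iii) of niceness) matches the transformation $T_1$ and the function $f_1$ in whose seminorm we are assuming vanishing. This is exactly why the definition of $\R$-nice is phrased via the padded vectors $\vec p_{i,j}$, so the identification should go through, but it requires care that the zero-padding does not create spurious degree collisions and that the one extra coordinate $e_{\vec\epsilon}$ appearing in Lemma~\ref{L:F6}'s bound (a bounded shift) does not affect the seminorm — which is handled by $\nnorm{R^{e}F}_{k,\mu\times\mu,R}=\nnorm{F}_{k,\mu\times\mu,R}$ as in the proof of that lemma. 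A secondary, routine point is that Lemma~\ref{L:F6} is stated for a single transformation $S=T_s$, so one applies it in the first coordinate only, noting that the other $(\ell m-1)$ flow directions act trivially on $\hat f_1$ in the extra coordinates and therefore do not inflate $\nnorm{\hat f_1}_{k_0,\nu,S}$ beyond the claimed bound; alternatively one invokes the monotonicity $\nnorm{\hat f_1}_{k_0,\nu,S}\le\nnorm{\hat f_1}_{k_0+1,\nu,S}$ and the remarks of Subsection~2.2 to absorb any discrepancy.
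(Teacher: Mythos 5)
Your proposal follows exactly the paper's route: verify Condition (ii) of Theorem~\ref{T:F4} as in the proof of Theorem~\ref{T:3*}, use the decomposition \eqref{E:FF} to identify the flow averages on $Y$ with an integer polynomial average governed by the nice family $(\vec{\mathcal{P}}_1,\ldots,\vec{\mathcal{P}}_\ell)$, transfer the seminorm hypothesis via Lemma~\ref{L:F6} to $\nnorm{\hat{f}_1}_{k-1,\nu,S_1}=0$ with $S_1=T_{1,|a^{1,1}_r|}$, and invoke Proposition~\ref{P:F8} to obtain Condition (i). The argument is correct and coincides with the paper's proof, with your extra remarks on the bookkeeping being accurate elaborations rather than deviations.
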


\begin{proof}
We have to show, for the action $\displaystyle \prod_{i=1}^{\ell}T_{i,a_{i,1}}\cdot \ldots\cdot \prod_{i=1}^\ell T_{i,a_{i,m}},$ where $a_{i,j}=p_{i,j},$ and the $Y$-extensions of $f_j,$ $\hat{f_j},$ all defined in the proof of Theorem~\ref{T:M}, that the Condition (i) of Theorem~\ref{T:F4} holds and we will have the result, since we have already seen, in the proof of Theorem~\ref{T:3*}, that Condition~(ii) holds for real polynomials. According to the hypothesis, there exists $k=k(d,\ell,m)\in \N$ such that $\nnorm{f_1}_{k,\mu,T_1}=0.$ Using Lemma~\ref{L:F6} we have that $\nnorm{\hat{f}_1}_{k-1,\nu,S_1}=0,$ where, if $a^{1,1}_r$ is the leading coefficient of $p_{1,1},$ then $S_1$ can be chosen to be equal to $T_{1,|a^{1,1}_r|}.$ 

Writing each $T_{i,p_{i,j}}$ as in \eqref{E:FF} and using the fact that $(\mathcal{P}_1,\ldots,\mathcal{P}_\ell)$ is an $\R$-nice family in $\R[t]$, we get the respective nice family $(\vec{\mathcal{P}}_1,\ldots,\vec{\mathcal{P}}_\ell)$ in $\Z[t].$ 

The Condition (i)  of Theorem~\ref{T:F4} now follows from Proposition~\ref{P:F8}.
\end{proof}

Theorem~\ref{T:F2} will now follow from Proposition~\ref{P:F3}, as Corollary~5.2 in \cite{CFH} follows from Proposition~5.1 in \cite{CFH}.

\begin{proof}[Proof of Theorem~\ref{T:F2}]
We apply Proposition~\ref{P:F3} to the family $(\mathcal{P}_1,\ldots,\mathcal{P}_\ell)$ where $\mathcal{P}_1=(p_1,0,\ldots,0),$ $\mathcal{P}_2=(0,p_2,\ldots,0),\ldots, \mathcal{P}_\ell=(0,\ldots,0,p_\ell)$ which is an $\R$-nice family.
\end{proof}

Before we prove Corollary~\ref{C:F10} via Theorem~\ref{T:F2} and close this section, we recall the notion of a weak mixing system.

\begin{definition*}[\cite{FKO}]
Let $(X,\mathcal{X},\mu,T)$ be a system. If for any two functions $f,g\in L^2(\mu)$ we have that
$$\lim_{N\to\infty}\frac{1}{N}\sum_{n=1}^N\Big|\int f T^{n}g \;d\mu-\int f\;d\mu \int g\;d\mu\Big|=0,$$ then $T$ is called {\em weakly mixing transformation} and the system $(X,\mathcal{X},\mu,T)$ {\em weakly mixing system}.
\end{definition*}


It is an immediate consequence of the definition of the seminorms $\nnorm{\cdot}_{k}$ that, for weak mixing systems $(X,\mathcal{X},\mu,T),$ we have $\nnorm{f}_{k,\mu,T}=\Big| \int f\;d\mu \Big|$ for every $k\in\N.$

\begin{proof}[Proof of Corollary~\ref{C:F10}]
We can assume that the integral of some $f_i$ is $0,$ by using the elementary identity from \cite{FKO} $$\prod_{i=1}^\ell a_i-\prod_{i=1}^\ell b_i=\sum_{j=1}^k\left(\prod_{i=1}^{j-1}a_i \right)(a_j-b_j)\left(\prod_{i=j+1}^k b_i\right),$$ where we have set $\prod_{i=1}^0a_i=\prod_{i=k+1}^k b_i=1.$  We can actually assume that $\int f_1\;d\mu=0.$ Then $\nnorm{f_1}_{k,\mu,T_1}=0$ for all $k.$ The result now follows from Theorem~\ref{T:F2}.
\end{proof}

\section{Proof of main results}

 In this last section we give the proof of Theorems~\ref{T:1*}, ~\ref{T:s}, ~\ref{T:a} and ~\ref{T:F1}.

\medskip

Theorem~\ref{T:1*} will follow from Theorem~\ref{T:2*}, since, as we have already seen, every sequence of the form \eqref{E:integer} is $k$-regular (for all $k$) and $k$-weak-anti-uniform, for some $k$ depending on the integers $\ell, m$ and the maximum degree of the polynomials $p_{i,j}.$

 Theorem~\ref{T:s} will follow from Theorems~\ref{T:3*} and \ref{T:M2}, using also results from \cite{F}.
 
  Theorem~\ref{T:a} will follow from Theorem~\ref{T:s} and the analogous result, Theorem~1.4 of \cite{F}.

Finally, Theorem~\ref{T:F1} will follow from Theorem~\ref{T:F4} via Theorem~\ref{T:F7} and Lemma~\ref{L:F6}.

\begin{proof}[Proof of Theorem~\ref{T:1*}] Since any sequence $(a(n))$ of the form \eqref{E:integer} is $k$-weak-anti-uniform and $k$-regular, for some $k$ depending on the integers $\ell, m$ and the maximum degree of the polynomials $p_{i,j},$ as we showed in the end of Section 3 and Section 4, we can apply Theorem~\ref{T:2*} to get the conclusion.
\end{proof}

In order to prove Theorem~\ref{T:s}, we will make use of the following Hilbert-space variant of van der Corput's estimate
(see \cite{F}).

 \begin{lemma}\label{L:VDC}
Let  $(v_n)$ be a bounded  sequence of vectors in an inner product
space and $(I_N)$ be a sequence of intervals with lengths tending to infinity.
Then
$$
\limsup_{N\to\infty}
\norm{\frac{1}{|I_N|}\sum_{n\in I_N} v_n}^2\leq 4 \ \!
\limsup_{H\to\infty} \frac{1}{H}\sum_{h=1}^H
\limsup_{N\to\infty}\Big|
\frac{1}{|I_N|}\sum_{n\in I_N} \langle v_{n+h},v_{n}\rangle \Big|.
$$
\end{lemma}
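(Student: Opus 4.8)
The plan is to prove Lemma~\ref{L:VDC} by the classical ``windowing plus Cauchy--Schwarz'' argument for van der Corput estimates, carried out uniformly over the intervals $(I_N)$ and with the order of limits (first $N$, then the window length $H$) carefully respected. Write $C:=\sup_n\norm{v_n}<\infty$, fix $H\in\N$, and for each $N$ set $A_N:=\frac{1}{|I_N|}\sum_{n\in I_N}v_n$ and $B_N:=\frac{1}{|I_N|}\sum_{n\in I_N}\frac1H\sum_{h=0}^{H-1}v_{n+h}$. The first step is to note that $A_N$ and $B_N$ differ only by boundary contributions: regrouping the double sum defining $B_N$ according to the value $n+h$, the coefficient of each $v_k$ equals $H$ except for at most $2H$ indices $k$ near the endpoints of $I_N$, whence $\norm{A_N-B_N}\le 2HC/|I_N|\to 0$ as $N\to\infty$ (this is where $|I_N|\to\infty$ enters, with $H$ fixed). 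Since $\norm{B_N}\le C$, expanding $\norm{A_N}^2=\norm{B_N}^2+2\Re\langle B_N,A_N-B_N\rangle+\norm{A_N-B_N}^2$ yields $\limsup_{N\to\infty}\norm{A_N}^2=\limsup_{N\to\infty}\norm{B_N}^2$.

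Next I would apply the convexity inequality $\norm{\frac{1}{|I_N|}\sum_{n\in I_N}w_n}^2\le\frac{1}{|I_N|}\sum_{n\in I_N}\norm{w_n}^2$ with $w_n=\frac1H\sum_{h=0}^{H-1}v_{n+h}$, and then expand $\norm{w_n}^2=\frac{1}{H^2}\sum_{h,h'=0}^{H-1}\langle v_{n+h},v_{n+h'}\rangle$. Interchanging the finite sums over $h,h'$ with the average over $n$, and using that the $\limsup$ of a finite sum is at most the sum of the $\limsup$'s together with $\Re z\le|z|$, the problem reduces to estimating, for each pair $(h,h')$, the quantity $\limsup_{N\to\infty}\bigl|\frac{1}{|I_N|}\sum_{n\in I_N}\langle v_{n+h},v_{n+h'}\rangle\bigr|$. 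Shifting the summation index by $h'$ alters $I_N$ in at most $H$ places at each end, so this equals $\limsup_{N\to\infty}\bigl|\frac{1}{|I_N|}\sum_{n\in I_N}\langle v_{n+(h-h')},v_n\rangle\bigr|$. Writing $c_d:=\limsup_{N\to\infty}\bigl|\frac{1}{|I_N|}\sum_{n\in I_N}\langle v_{n+d},v_n\rangle\bigr|$ for $d\ge1$, and using conjugate-symmetry of the inner product plus one more index shift to handle the case $h<h'$, each off-diagonal term equals $c_{|h-h'|}$, while each of the $H$ diagonal terms is $\limsup_{N\to\infty}\frac{1}{|I_N|}\sum_{n\in I_N}\norm{v_n}^2\le C^2$.

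Collecting terms by $d=|h-h'|$ — which occurs with multiplicity $2(H-d)$ for $d\ge1$ and with multiplicity $H$ for $d=0$ — I obtain
$$\limsup_{N\to\infty}\norm{A_N}^2\le\frac{C^2}{H}+\frac{2}{H^2}\sum_{d=1}^{H-1}(H-d)\,c_d\le\frac{C^2}{H}+\frac{2}{H}\sum_{h=1}^{H}c_h.$$
Taking $\limsup_{H\to\infty}$ kills the term $C^2/H$ (the $c_h$ are uniformly bounded by $C^2$, so passing to the upper limit in $H$ is legitimate) and leaves $\limsup_{N\to\infty}\norm{A_N}^2\le 2\,\limsup_{H\to\infty}\frac1H\sum_{h=1}^{H}c_h$, which in particular implies the stated inequality with constant $4$.

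The main obstacle here is bookkeeping rather than any conceptual difficulty: one must verify that every index shift perturbs the averaging interval in at most $O(H)$ positions while $H$ is held fixed, so that all such perturbations vanish in the limit $N\to\infty$ taken \emph{before} the limit $H\to\infty$. Once this order of limits is enforced, each individual step (boundary comparison, convexity, expansion of the square, diagonal bound) is an elementary inequality.
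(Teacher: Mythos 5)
Your proof is correct. Note that the paper itself does not prove Lemma~\ref{L:VDC}; it simply quotes it from \cite{F}, so there is no in-paper argument to compare against. What you give is the standard van der Corput argument (replace $v_n$ by the window average $\frac1H\sum_{h=0}^{H-1}v_{n+h}$ at $O(H/|I_N|)$ cost, apply convexity, expand the square, and shift indices), and the bookkeeping is handled correctly: every index shift perturbs the averaging set in at most $O(H)$ places, which is negligible because the limit in $N$ is taken before the limit in $H$. Your final bound
$$
\limsup_{N\to\infty}\norm{\tfrac{1}{|I_N|}\textstyle\sum_{n\in I_N} v_n}^2\leq 2\,\limsup_{H\to\infty}\tfrac{1}{H}\textstyle\sum_{h=1}^{H}\limsup_{N\to\infty}\bigl|\tfrac{1}{|I_N|}\textstyle\sum_{n\in I_N}\langle v_{n+h},v_n\rangle\bigr|
$$
is in fact sharper than the stated inequality (constant $2$ rather than $4$); the lemma as quoted follows a fortiori.
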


\begin{proof}[Proof of Theorem~\ref{T:s}]
According to Theorem~\ref{T:2*}, in order to have the conclusion, we have to show that the sequence $\int f_0\cdot (\prod_{i=1}^\ell T_i^{[a_{i,1} n]})f_1\cdot \ldots \cdot (\prod_{i=1}^\ell T_i^{[a_{i,m} n]})f_m\, d\mu$ is $(m+1)$-regular and ($m+1$)-weak-anti-uniform.

The $(m+1)$-regularity follows from Section 3, making use of Proposition~\ref{P:nilkey} and the convergence result we get from Theorem~\ref{T:3*}.

 According to Theorem~\ref{T:M2}, since, as we saw in the proof of Theorem~\ref{T:3*}, real valued polynomials satisfy the Condition (ii) of this result,  if $S_{i,j}=T_{i,a_{i,j}}$ for every $i, j$ we have to prove that the sequence $\tilde{a}(n)=\int f_0\cdot (\prod_{i=1}^\ell S_{i,1}^{n})f_1\cdot \ldots \cdot (\prod_{i=1}^\ell S_{i,m}^{n})f_m\ d\mu$ is ($m+1$)-anti-uniform. This follows from the Subsection 2.3.1 of \cite{F} with induction, by successively applying Lemma~\ref{L:VDC}, using the van der Corput operation (at most) $m$ times in order the linear polynomial iterates that appear in $\tilde{a}(n)$ to be reduced into constant ones.    
\end{proof}


\begin{proof}[Proof of Theorem~\ref{T:a}]
 From the definitions, it is immediate that $\overline{\mathcal{B}_k}^{\norm{\cdot}_2}\subseteq\overline{\mathcal{C}_k}^{\norm{\cdot}_2}.$ From  Theorem~\ref{T:s}, we also have (for $\ell=m=k$) that $\overline{\mathcal{C}_k}^{\norm{\cdot}_2}\subseteq\overline{\mathcal{A}_k}^{\norm{\cdot}_2}.$ The result now follows, since $\overline{\mathcal{A}_k}^{\norm{\cdot}_2}=\overline{\mathcal{B}_k}^{\norm{\cdot}_2}$ (from Theorem~1.4 in \cite{F}). 
\end{proof}

We will close this article with the proof of Theorem~\ref{T:F1}.

\begin{proof}[Proof of Theorem~\ref{T:F1}]
Analogously to Proposition~\ref{P:F3}, we have to show for the action $ T_{1,a_1n^{r_1}}\cdot \ldots\cdot T_{\ell,a_\ell n^{r_\ell}}$ (we have set $a_{i,i}=a_i n^{r_i}$ and $a_{i,j}=0$ for all $i\neq j$) and the $Y$-extensions of $f_j,$ $\hat{f_j},$ all defined in the proof of Theorem~\ref{T:M}, that the Condition (i) of Theorem~\ref{T:F4} holds and we will have the result, since we have already seen that Condition~(ii) holds for real polynomials in the proof of Theorem~\ref{T:3*}. According to the hypothesis, there exists $k=k(d,\max r_i)\in \N$ such that $\nnorm{f_{i_0}}_{k,\mu,T_{i_0}}=0$ for some $1\leq i_0\leq \ell.$ Using Lemma~\ref{L:F6} we have that $\nnorm{\hat{f}_{i_0}}_{k-1,\nu,S_{i_0}}=0,$ where,  $S_{i_0}$ can be chosen to be equal to $T_{i_0,|a_{i_0}|}.$ 

Since $T_{i,a_in^{r_i}}=T^{n^{r_i}}_{i,a_i}$ and all $r_i$ are pairwise distinct, the Condition (i)  of Theorem~\ref{T:F4} follows from Theorem~\ref{T:F7}.
\end{proof}



\end{document}